\newcommand{\stkout}[1]{\ifmmode\text{\sout{\ensuremath{#1}}}\else\sout{#1}\fi}
\newcommand{\hs}{\vspace{3mm}}
\newtheorem{thm}{Theorem}[section]
\newtheorem{theorem}{Theorem}[section]
\newtheorem{remark}[theorem]{Remark}
\newtheorem{assumption}[theorem]{Assumption}
\newtheorem{prop}[theorem]{Proposition}
\newtheorem{corollary}[theorem]{Corollary}
\newtheorem{definition}[theorem]{Definition}
\def\le{\leq}
\def\vphi{\varphi}
\def\rr{\mathbb{R}}
\def \eee{\mathbb{E}}
\definecolor{red}{rgb}{1.0,0.0,0.0}
\definecolor{blu}{rgb}{0.0,0.0,1.0}
\definecolor{gre}{rgb}{0.03,0.50,0.03}
\def\eps{\varepsilon}
\renewcommand{\hat}{\widehat}
\title[Mean-Field Equilibrium with Irreversible Investment and Regime Switching]{A Stationary Mean-Field Equilibrium Model of Irreversible Investment in a Two-Regime Economy}
\author[A\"id]{Ren\'e A\"id}
\author[Basei]{Matteo Basei}
\author[Ferrari]{Giorgio Ferrari}
\address{R.\ A\"id: Université Paris-Dauphine, PSL Research University - Department of Economics - Place du Mar\'echal de Lattre de Tasssigny, 75 775 Paris Cedex, France.}
\email{\href{mailto:rene.aid@dauphine.psl.eu}{rene.aid@dauphine.psl.eu}}
\address{M.~Basei: EDF R\&D Paris and FiME (Finance for Energy Markets Research Centre), Paris, France.}
\email{\href{mailto:matteo.basei@edf.fr}{matteo.basei@edf.fr}}
\address{G.~Ferrari: Center for Mathematical Economics (IMW), Bielefeld University, Universit\"atsstrasse 25, 33615, Bielefeld, Germany}
\email{\href{mailto:giorgio.ferrari@uni-bielefeld.de}{giorgio.ferrari@uni-bielefeld.de}}
\date{\today}
\numberwithin{equation}{section}
\begin{document}

\begin{abstract} 
We consider a mean-field model of firms competing {\em à la Cournot} on a commodity market, where the commodity price is given in terms of a power inverse demand function of the industry-aggregate production. Investment is irreversible and production capacity depreciates at a constant rate. Production is subject to Gaussian productivity shocks, while large non-anticipated macroeconomic events driven by a two-state continuous-time Markov chain can change the volatility of the shocks, as well as the price function. Firms wish to maximize expected discounted revenues of production, net of investment and operational costs. Investment decisions are based on the long-run stationary price of the commodity. We prove existence, uniqueness and characterization of the stationary mean-field equilibrium of the model. The equilibrium investment strategy is of barrier-type and it is triggered by a couple of endogenously determined investment thresholds, one per state of the economy. We provide a quasi-closed form expression of the stationary density of the state and we show that our model can produce Pareto distribution of firms' size. This is a feature that is consistent both with observations at the aggregate level of industries and at the level of a particular industry. We establish a relation between economic instability and market concentration and we show how macroeconomic instability can harm firms' profitability more than productivity fluctuations.
\end{abstract}

\maketitle

\smallskip

{\textbf{Keywords}}: mean-field stationary equilibrium; irreversible investment; regime-switching; market concentration; value of economic stability.

\smallskip

{\textbf{OR/MS subject classification}}: Dynamic programming/optimal control: Markov; Games/group decisions: Stochastic; Inventory/production: Stochastic models.

\smallskip

{\textbf{JEL subject classification}}: C61, C62, C73, D25, D41, E32, L11, L22.

\smallskip

{\textbf{MSC2020 subject classification}}: 49N80, 49L20, 91A15, 91A16.



%
%
%
%
%
%
%


\section{Introduction}
\label{sec-intro}

We investigate the long-run market structure of firms competing {\em à la Cournot} in a context where they face both idiosyncratic fluctuations and macroeconomic cycles. In particular, we are interested in the effects of macroeconomic instability on the stationary state of the market structure, described by the density of firms' size, the market concentration, and the profitability of firms. To this purpose, we consider a mean-field model of firms competing {\em à la Cournot} on a commodity market. Investment is irreversible and production capacity depreciates at a constant rate. Two types of shocks affect firms' profit and cost. First, production is subject to Gaussian shocks, representing random variations of productivity. Second, non-anticipated macroeconomic events can drive the whole market into instability (large fluctuations of productivity) or stability (small fluctuations of productivity). Such events are driven by a two-state continuous-time Markov chain. When a regime-change occurs, the values of the average level and elasticity of prices, as well as the volatility of the Gaussian shocks affecting production, can change. Firms wish to maximize expected discounted revenues of production, net of the investment and operational costs. One key aspect of our irreversible investment model is the {\em pricing rule} used by the representative firm to assess the appropriate level of investment. The commodity price is given in terms of a decreasing power function of the industry-aggregate production. However, we assume that the decision-maker uses the {\em long-term stationary price} of the commodity in order to estimate the profitability of her investment. Because there is a one-to-one correspondence between production capacity in the market and the price, the assumption made by the decision-maker is equivalent to making an estimation of the appropriate level of capacity that can be sustained in the market in the long-run. Although such a criterion does not corresponds to the future realised cash-flows that the firm can expect to make, it corresponds to a decision-making process of most of the firms involved in commodity markets. As a matter of fact, since making long-term forecasts of commodity prices is a highly risky exercise, firms reduce the complexity of the potential futures by designing {\em long-run scenarios} of prices. By considering a {\em stationary} mean-field equilibrium, we assume that firms found it easier to coordinate or to agree on the level of capacity that can be sustained in the market in the long-run, rather than on the whole path of investment decisions. The present model clearly takes the counter-step of stochastic dynamic models in high dimension, aiming at capturing most of the risk factors of an industry, as well as the differences in technologies (see \cite{Aid14} and the references therein for an overview of this modeling framework applied to electricity generation).

\hs

\noindent{\bf Mathematical results.} The considered stationary irreversible investment problem is modeled through a mean-field model with nondecreasing singular stochastic controls (see, e.g., \cite{Bertola} or Chapter 11 in \cite{Dixit94} for early contributions on irreversible investment problems with singular controls). The monotonicity requirement on the control processes well describes the irreversibility constraint of the investment policies and also allows to take into account lump-sum or singularly-continuous actions. The singular control affects linearly Markov-modulated geometric Brownian dynamics providing the evolution of the representative company's production capacity, and each unit of investment into production gives rise to proportional costs. At equilibrium, the company maximizes total expected discounted net revenues from production and sells the produced good at a regime-dependent price which is given in terms of the long-run industry-aggregate production. We are able to show existence, uniqueness and characterization of the equilibrium. This is achieved via a constructive \emph{three-step approach}. 

Firstly, for a given and fixed vector-valued parameter representing the regime-dependent long-run industry-aggregate production, we solve the singular stochastic control problem with regime-switching faced by the representative company. Solving such a dynamic optimization problem through a guess-and-verify approach (i.e., considering suitable parametric candidate solutions to the corresponding dynamic-programming equation and selecting the ``optimal'' parameters by imposing appropriate regularity conditions, i.e.\ the so-called smooth-fit and smooth-pasting conditions) is possible, but challenging in the present context (among many others, see \cite{Cadenillas}, \cite{Cadenillas2012}, \cite{Guo2005} for Markov-modulated control problems addressed via the guess-and-verify approach). As a matter of fact, the underlying Markov chain makes the dynamic programming equation result into a system of interconnected constrained ODEs, with the effect that it becomes hard to show existence and uniqueness of the solution to the highly nonlinear (and unhandy) smooth-fit and smooth-pasting equations. We therefore adopt a different approach, already employed in \cite{FeRo} (and inspired by the early contributions in \cite{KS84} and \cite{BK97}): We introduce an optimal stopping problem with regime switching; via direct probabilistic and analytic methods, we prove existence of thresholds triggering its optimal stopping rule, as well as regularity of its value function; finally, we verify that a suitable integral of the stopping problem's value function identifies with the value function of the considered singular stochastic control problem. As a by-product, we also obtain the form of the optimal investment rule. This prescribes to exert the minimal amount of effort needed to prevent that the (optimally controlled) production capacity falls below an endogenous trigger, depending on the current regime and, clearly, on the fixed stationary industry-aggregate production. 

As a second step, still for a given and fixed vector-valued parameter representing the regime-dependent long-run industry-aggregate production, we determine the joint stationary distribution of the optimally controlled production process and of the underlying Markov chain. This is obtained by solving the corresponding stationary Fokker-Planck equation which, in the present setting, corresponds to a system of interconnected ODEs subject to suitable boundary conditions (see also \cite{DAuriaKella}). It is worth emphasizing here, that we do obtain a semi-explicit formula for such a stationary distribution, and not only its existence and uniqueness.

Finally, we impose the consistency condition, that is, we impose that the vector-valued parameter $Q$ fixed in the previous two steps indeed identifies with the regime-dependent long-run industry-aggregate production. This naturally leads to a fixed point problem, in that the stationary distribution, and therefore its average, depend on $Q$. We address the question of existence of a solution to the fixed point problem via the Brower's fixed point theorem, while uniqueness follows from a contradiction argument, inspired by \cite{Weintraub22}, which exploits a suitable monotonicity property of the investment triggers with respect to $Q$. It is worth noticing that such a monotonicity can be easily shown via the relation to optimal stopping and it is not implied by the well-known Lasry-Lions monotonicity condition (see, e.g., p. 169 of volume I in \cite{CarmonaDelarue}), as our requirements on the instantaneous profit and inverse demand functions make the Lasry-Lions condition not fulfilled in general (see also Remark 1 in \cite{Cao22}).

Our constructive approach to the existence and uniqueness has the important by-product that it also yields a complete characterization of the equilibrium itself. In particular, the equilibrium regime-dependent investment-triggers and prices are completely determined through a system of nonlinear algebraic equations. Those can be then solved numerically in order to understand the economic insights of our model.


\hs

\noindent{\bf Economic results.} In the course of the construction of the mean-field equilibrium, a first important result concerns the semi-explicit determination of the stationary distribution of the firms' sizes, which follows a Pareto law, $\mathbb P\big( X_{\infty} \geq x \big) \sim x^{-|\theta_2|}$ for some tail parameter $|\theta_2|$(here, and in the following, with a slight abuse of notation, $X_{\infty}$ is a random variable distributed according to the stationary distribution of the equilibrium state-process). It is a well-documented stylized fact in Industrial Economics (see \cite{Axtell01}) that at the aggregate level (i.e.\ mixing all types of firms in the same sample), the tail parameter is close to one. In that case, the distribution becomes a Zipf's law (see \cite{Gabaix99}). According to the model of endogenous firms' growth based on innovation developed by \cite{Luttmer07}, a potential explanation for this value is the small imitation cost across sectors and firms. In our model, the tail parameter $\theta_2$ depends only on the volatilities of the states, the depreciation rate of the capital, and on the Markov chain's intensities of jumps. When intensities of switches are small ($p_1=p_2 \approx 0$), we recover the power law coefficient as in \cite{Luttmer07}(see p.~1125 therein). Besides, we find that in sectors where capital slowly erodes, the power law turns closer to a Zipf's law. However, in contrast, our model can also induce large deviations from one, if, for instance, the capital erodes quickly (see Section~\ref{ssec:distrib}). As a matter of fact, at the sectoral level, it is possible to exhibit power law coefficients larger than one.  In \cite{Rossi07} it is showed that firms' size tail distribution depends on the capital intensity usage, both physical and human capital.  Recent empirical results at different industries level exhibit large negative exponent, like in \cite{Halvarsson14}(Figure~1), where a coefficient around $-4$ can be found in some industries (see also \cite{Bee17}). 

Second, we investigate the market concentration in the stationary state. Indeed, in a general setting of stationary mean-field models, \cite{Adlakha15} find that the light-tail feature of the stationary distribution of players' size is a sufficient condition for the existence of a stationary mean-field equilibrium. In our model, we have seen that equilibrium firms' size exhibit fat-tails, thus showing that the sufficient condition of \cite{Adlakha15} is not necessary. Hence, in order to measure the extent of market concentration in a setting with an continuum of firms, we introduce two indices. The first one is a version of the Herfindahl-Hirschman index (HHI), when the number of firms goes to infinity. The HHI index is used in market concentration analysis by most regulators and it is defined as $H_n := \sum_{i=1}^n s_i^2$, where $n$ is the number of firms serving the market and $s_i$ is the market share of firm $i$. Fully fragmented and highly concentrated markets both exhibit an average market share going to zero as $n$ goes to infinity. However, in the first case, the variance of market shares is constantly equal to zero, while the ratio between variance and expectation in the latter case admits a finite limit (see Section~\ref{ssec:market}). Those remarks suggest using as an index of market concentration the ratio between the variance of the firms' size and the square of its expectation at the stationary equilibrium. The second index we choose is a Gini index, as already suggested in \cite{Hopenhayn92}. For a given quantile $q \in (0,1)$, define $x(q)$ as the lowest $x$ such that $F(x) := \mathbb P(X_\infty\leq x) = q$. Then we define the Gini curve by $\bar Q(q) = \mathbb E[X_\infty | X_\infty \leq x(q)]/{\overline{Q}}^\star$~$\in (0,1)$, where ${\overline{Q}}^\star$ is the equilibrium average production across the regimes. Finally, the Gini index of market concentration is defined as 
\begin{align*}
H := \int_{0}^1 \big| q -  \bar Q(q) \big|  dq.
\end{align*}
The H-index measures deviations from the uniform distribution of market shares, as measured by the capacity held by firms at each level of quantiles. A fully fragmented market would yield a zero H-index, whereas a fully concentrated market served by a single monopolistic firm would induce an H-index of 1/2.  We inquiry the effect of economic instability as measured by the increase of $p_1$ on market concentration. As a matter of fact, $1/p_1$ is the average time spent by the economy in the regime with larger volatility of production. We find that, both indicators, HHI and Gini H-index, show consistent results: a longer period of unstable economy tends to increase market concentration.

Third, we analyze the value of stability (see Section~\ref{ssec:crisis}). This point relates to the result of \cite{Lucas77} on the irrelevance of social hedging against the cost of volatility growth. Using a simple model of intertemporal maximization of utility of a risk-averse representative agent, Lucas finds that excessively high risk-aversion would be necessary to justify the economic interest to hedge society against macroeconomic fluctuations. This result was reassessed in the macroeconomic literature, in particular in \cite{Epaulard03}, where the authors reach the same conclusion using more recent growth data and an endogenous growth model. Our model is only a partial equilibrium model. Nevertheless, it allows to investigate at a sectoral level the relative effects of Gaussian fluctuations of productivity compared to global macroeconomic shocks. To do so, using our quasi-analytical solution of the stationary state density, we compute the value $V ^\star$ of the firms' expected profit at the stationary equilibrium. Consider the vector $\nu_i := (\sigma_i,p_i,\phi_i)$ of the volatility of production shocks $\sigma_i$, the intensity of switch $p_i$, and the level of price $\phi_i$. We evaluate the elasticities of $V^\star$ w.r.t.~the State 1 volatility~$\sigma_1$, intensity of switch~$p_1$, and the average level of price~$\phi_1$ at a point where $\nu_1 = \nu_2$. That is, we assess the percentage effect of a slight increase in quality of State 1, compared to State 2 on the profitability of the representative firm. Using standard value parameters for depreciation and discount rate, we find that increasing the volatility $\sigma_1$ by 1\% decreases the stationary value of the firm by 0.08\%, but reducing the price level $\phi_1$ by 1\% cuts the value $V^\star$ by nearly the same amount. This result provides a quantification of the intuition that a small reduction on the average selling price is a larger disaster at sectoral level than a small increase of productivity shocks volatility. Besides, we observe a quasi-constant and close to $1$ elasticity of $V^\star$ w.r.t.\ the level of price, as well as a lower but sharply increasing elasticity w.r.t.\ the volatility. It means that the marginal cost of volatility is increasing while the marginal gain from stability is decreasing.

\hs

\noindent{\bf Related literature.} Our paper belongs to the literature on the dynamics of investment of firms, in particular when investment is irreversible and represented as a singular control process. This vast literature includes the early \cite{BK97} and \cite{Bertola}, and the more recent \cite{Aid15}, \cite{DeAFeFe}, \cite{Ferrari}, and \cite{RiSu}, and the references therein. A close related paper is \cite{Grenadier02}, where a singular control equilibrium problem with homogeneous firms competing {\em à la Cournot} similar to ours can be found. In his setting, Grenadier provides the $N$-firm equilibrium and gives explicit solution for the investment threshold for different classical dynamics of the demand shocks. 
In \cite{BackPaulsen} and \cite{Steg}, $N$-player capital accumulation games with open loop strategies are considered in general Markovian and non-Markovian settings, respectively. On the other hand, \cite{Kwon} proposes a two-player singular stochastic control game to model a public good irreversible contribution game and analyze the gradualism arising from the free rider effect (see also \cite{Ferrarietal}). 

With reference to the literature on mean-field games and competitive market equilibria with a continuum of agents, our paper is placed amongst those works that study mean-field equilibria for games with singular controls. Amongst those, \cite{Miao08} presents an analytically tractable competitive equilibrium model and study the effect of frictions, such as irreversibility and fixed costs, on the long-run equilibrium; \cite{BertolaCaballero} propose and solve a model of sequential irreversible investment and study theoretically and empirically the aggregate implications of microeconomic irreversibility and idiosyncratic uncertainty; \cite{CaoGuo} solve a stationary discounted mean-field game with two-sided singular controls, and analyze its relation to the associated $N$-player game; \cite{Cao22} address a general class of stationary one-dimensional mean-field games with discounted and ergodic criteria and study the relation between the resulting equilibria; in \cite{Campietal} and \cite{GuoXu} mean-field and $N$-player stochastic games for finite-fuel follower problems are studied, and the structure of equilibria is obtained. Finally, \cite{HorstFu} provide a careful technical analysis of the question of existence for general mean field games involving singular controls. 

Our model focuses on the determination and characterization of a stationary equilibrium. In this regard, our work relates more generally to those treating stationary mean-field games and stationary oblivious equilibria for infinite models (cf.\ \cite{Adlakha15, Bardi, Hopenhayn92, Weintraub08, Weintraub11}, among others), where it is assumed that the representative player makes actions only on the basis of her own state and the long-run average state of the mass. This formalizes the following idea: In a symmetric game with a large number of players, whose state and performance criterion only depend on the distribution of opponents' state (i.e.\ an anonymous game, cf.\ \cite{JovanovicRos}), fluctuations of players' states are expected to average out, the behavior of the other agents is ``lost in the crowd'', and the population's state remains roughly constant over time.  

\hs

\noindent{\bf Structure of the paper.} The paper is organized as follows. Section~\ref{sec-formulation} provides a detailed description of the model, its assumptions and gives the main result of existence and uniqueness of the equilibrium together with the closed-form expression of the equilibrium stationary distribution of the state-process.  Section~\ref{sec-study} builds on the former section to provide first illustrations of the solution (Section~\ref{ssec:dynamics}), and then results on the density of firms (Section~\ref{ssec:distrib}), on the market concentration (section~\ref{ssec:market}) and on the profitability of firms (section~\ref{ssec:crisis}). Section~\ref{sec:Equiconstr} provides the proof of the main result. Finally, Appendix \ref{sec-appendix} collects some technical proofs.

\section{The Model and the Main Result}
\label{sec-formulation}

There is a continuum of firms of unitary mass indexed by their production capacity. Firms behave competitively, taking prices of output and input as given. Firms are ex ante identical in that their technology or productivity shocks are drawn from the same distribution. They differ ex post in the realization of idiosyncratic shocks. This is modeled as a one-dimensional Brownian motion $B=(B_t)_{t \geq 0}$ on a given complete probability space $(\Omega, \mathcal{F}, \mathbb{P})$. We denote by $\eee$ the expectation under $\mathbb{P}$. On the same probability space, it is also defined a two-state irreducible continuous-time Markov chain $\eps=(\eps_t)_{t\geq0}$, with irreducible generator $P=(P_{ij})_{i,j=1,2}$ and stationary distribution $\pi$:
\begin{equation}
\label{eq-Pi}
P:= \begin{bmatrix}
-p_1 & p_1 \\
p_2& -p_2 
\end{bmatrix}
, \qquad\quad
\pi=(\pi_1,\pi_2):=\Big(\frac{p_{2}}{p_1+p_2},\, \frac{p_{1}}{p_1+p_2}\Big),
\end{equation}
with $p_1,p_2 \in (0,1)$. In particular, the time spent in state $i$ before switching to state $j\neq i$ is an exponential random variable with parameter $p_i$.

We assume that $B$ and $\eps$ are independent and we denote by $\mathbb{F} := (\mathcal{F}_t)_{t\geq0}$ the filtration generated by $(B_t, \eps_t)_{t \geq 0}$, as usual augmented by the $\mathbb{P}$-null sets of $\mathcal{F}_0$. While the Brownian motion $B$ drives the stochastic component of the production and is responsible, e.g., of productivity shocks, the two-state Markov chain $\eps$ models the regime of the economy. 

A representative company's production capacity evolves as 
\begin{equation}
\label{eq-SDE}
dX^I_t = - \delta X^I_t dt + \sigma_{\eps_t} X^I_t dB_t +  X^I_t \circ dI_t, \qquad t \geq 0, \qquad X_{0-}=x>0,
\end{equation}
where $\delta, \sigma_1,\sigma_2>0$ are given positive constants and $I = (I_t)_{t \geq 0} \in \mathcal{M}$, where 
\begin{equation*}
\mathcal{M} := \{\text{$I:\Omega\times[0,\infty) \to [0,\infty)$, $\mathbb{F}$-adapted, with $t \mapsto I_t$ non-decreasing, c\`adl\`ag and $I_{0-}=0$ a.s.}\}.
\end{equation*}
The first term on the right-hand side of \eqref{eq-SDE} corresponds to depreciation, due to the ageing of the means of production; the second term models production uncertainty, with the amplitude of the Brownian shocks depending on the current regime of the economy; the third term is due to the company's irreversible investment into production. As a matter of fact, $I_t$ represents the cumulative investment (per unit of production) performed over the time period $[0,t]$, so that $dI_t$ represents, informally, the amount of investment, per unit of production capacity, made in the infinitesimal amount of time $dt$. 

More precisely, given that any $I \in \mathcal{M}$ can be decomposed as $I_t = I^c_t + I^j_t$, where $I^j_t := \sum_{s\leq t, \Delta I_s \neq 0} \Delta I_s$, with $\Delta I_s := I_s - I_{s-}$, is the discontinuous part of $I$ and $I^c_t:=I_t-I^j_t$ is its continuous part, we follow \cite{Zhu}, \cite{AlMotairiZervos} and \cite{GuoZervos}, among others, and define 
\begin{equation}
\label{eq-XcircI}
\int_{[0,\cdot]} X^I_t \circ dI_t := \int_0^{\cdot} X^I_t dI^c_t + \sum_{t \leq \cdot} X^I_{t-} \int_0^{\Delta I_t} e^{ u} du = \int_0^{\cdot} X^I_t dI^c_t + \sum_{t \leq \cdot} X^I_{t-} \big(e^{\Delta I_t} - 1\big).
\end{equation}
In order to justify \eqref{eq-XcircI}, assume that a small intervention $h$ implies a proportional jump, i.e.\ $X_t = (1+h)X_{t-} \sim e^{ h}X_{t-}$. Thinking of any intervention $\Delta I_t$ as the combination of $N$ small interventions of size $h=\Delta I_t/N$, this in turn leads to $X_t = (e^{ h})^N X_{t-} = e^{\Delta I_t} X_{t-}$, hence $\Delta X_t = (e^{\Delta I_t} -1)X_{t-}$. Thanks to \eqref{eq-XcircI}, an application of It\^o's formula implies that \eqref{eq-SDE} admits the following solution:
\begin{equation}
\label{eq-SDE-sol}
X^I_t = x \, \exp\bigg(-\Big(\delta t + \frac12 \int_0^t \sigma^2_{\eps_s} ds\Big) + \int_0^t \sigma_{\eps_s} dB_s +  I_t \bigg).
\end{equation}

Let $\eta_i$ be the unitary market price of the company's production (goods, commodities, ...) when the economy is in regime $i \in \{1,2\}$, so that, assuming that production happens at full capacity and that demand and offer are in equilibrium, the company's revenue from the sale of its production at time $t$ is then $\eta_{\eps_t} X^I_t$. Furthermore, we assume that production gives rise to running costs that are quadratic in the production capacity, while investment cost is proportional to the invested amount, with marginal cost $\kappa>0$. 

Hence, given $(x,i) \in \rr_+ \times \{1,2\}$ be a given initial state, for fixed $Q \in \rr^2_+$ and $I \in \mathcal{A}$, with
\begin{equation}
\label{set:A}
\mathcal{A} := \Big\{ I \in \mathcal{M}:\, \lim_{T\uparrow \infty}\eee_{(x,i)}\big[e^{-\rho T}|X^I_T|^2\big]=0 \quad \text{and} \quad \eee_{(x,i)} \bigg[\int_0^\infty e^{-\rho t} \Big((X^I_t)^2 dt + X^I_t \circ dI_t \Big) \bigg] < \infty  \Big\},
\end{equation}
the company faces the following net profit functional:
\begin{equation}
\label{eq-J}
J_{(x,i)}(I,Q) := \eee_{(x,i)}\bigg[\int_0^\infty e^{-\rho t} \Big(\eta_{\eps_t} X^I_t  dt - c (X^I_t)^2 dt -  \kappa X^I_t \circ dI_t \Big) \bigg].
\end{equation}
Above and in the sequel, $\rho,c>0$ are given constants and $\eee_{(x,i)}$ denotes the expectation conditioned on $(X^I_{0-}, \eps_{0-})=(x,i)$.

The next technical condition requires that the representative agent is sufficiently impatient, and it will be relevant when proving the admissibility of a candidate equilibrium irreversible investment. 
\begin{assumption}
\label{ass:assrho}
$$\rho > 2\max\{\sigma^2_1,\sigma^2_2\}.$$
\end{assumption}
Notice that, under Assumption \ref{ass:assrho}, one has $\rho + 2\delta > \max\{\sigma^2_1,\sigma^2_2\}$, which suffices in order to ensure that the control $I\equiv 0$ belongs to $\mathcal{A}$, and therefore $\mathcal{A}\neq \emptyset$.
 
The interaction of the representative company with the industry comes through the price at which the produced good is sold. In particular, we assume that, for any regime $i \in \{1,2\}$, prices are given through the inverse-demand relation 
\begin{align}
\label{eq-eta}
\eta_i & = \vphi_i + \zeta_i Q_i^{-\alpha}, 
\end{align}
where $\vphi_1, \vphi_2, \zeta_1,\zeta_2 > 0$ and $0<\alpha<1$ are given constants such that $\min\{\vphi_1, \vphi_2\} > \rho + \delta$ \footnote{This means that the price of the good is strictly larger than the price arising in a perfectly competitive market, i.e.\ $\rho + \delta$. In particular, such a condition on the exogenous parameters is sufficient to ensure a trade-off between costs of production and investment and revenues.} and $Q_i$ gives the stationary aggregate production of the industry in regime $i \in \{1,2\}$. This is clarified through the following Definition of equilibrium.
\begin{definition} 
\label{def-MFE}
The pair $(I^{\star},Q^{\star}) \in \mathcal{A} \times \rr^2_+$ is a \textbf{stationary mean-field equilibrium} (MFE) for the model with data $(x,i) \in \rr_+ \times \{1,2\}$ if:
\begin{itemize}
    \item[(i)] $I^{\star}$ maximizes $J_{(x,i)}(\,\cdot\,, Q^{\star})$; that is,
    $$J_{(x,i)}(I^{\star},Q^{\star}) \geq J_{(x,i)}(I,Q^{\star}), \qquad I \in \mathcal{A};$$
    \item[(ii)] the pair $(X^{I^{\star}}_t,\eps_t)_{t \geq 0}$, formed by the optimally controlled production capacity and the Markov chain, admits a stationary distribution $(p_\infty(dx,i))_{i=1,2}$ and, letting $(X_\infty^{I^{\star}},\eps_\infty) \sim p_\infty$, we have
    \begin{equation*}
    Q_i^{\star} = \frac{1}{\pi_i} \int_0^\infty x \, p_\infty(dx,i), \qquad i=1,2. 
    \end{equation*}
\end{itemize}
\end{definition}

\begin{remark}
\label{rem:equilibrium}
Recalling that the mass of the continuum of companies has been normalized to one, $\frac{1}{\pi_i}p_\infty(dx,i)$ in Definition \ref{def-MFE} represents the equilibrium number (i.e.\ the equilibrium percentage) of companies that in the long-run have production capacity between $x$ and $x+dx$, when the regime of the economy is $i\in\{1,2\}$. In particular, this allows to equivalently write
\begin{equation}
\label{eq:Qiprobab}
Q_i^{\star}=\eee\big[ X_\infty^{I^{\star}} \big| \eps_\infty = i \big],\quad i \in \{1,2\},
\end{equation}
where the random variable $(X_\infty^{I^{\star}},\eps_\infty) \sim p_\infty$ and $\eps_\infty \sim \pi$ (cf.\ \eqref{eq-Pi}).
\end{remark}

Under technical integrability requirements, we can prove that a unique mean-field equilibrium indeed exists. The following statement is presented in an informal way and it is just meant to provide the necessary information on the equilibrium structure needed for the numerical analysis developed in the next Section \ref{sec-study}. The constructive proof of the existence and uniqueness claim will be then distilled in Section \ref{sec:Equiconstr}.

\begin{theorem}
\label{thm:Equilibrium}{\rm [Equilibrium existence, uniqueness and structure]}
Let Assumption \ref{ass:assrho} hold and, for $i\in\{1,2\}$, define $\phi_i(\theta) := \frac12 \sigma_i^2\theta^2 + \Big(\delta + \frac12 \sigma^2_i\Big)\theta - p_i$ and denote by $\theta_2$ the largest negative root of the equation $\phi_1(\theta)\phi_2(\theta) - p_1p_2 = 0$. If $\theta_2 < -1$, then there exists a unique stationary mean-field equilibrium in the sense of Definition \ref{def-MFE}. In particular:
\begin{itemize}
\item[(i)] The equilibrium investment strategy $I^{\star}$ is given by
\begin{equation*}
I^{\star}_t = 0 \,\, \vee \,\,   \sup_{0 \leq s \leq t} \bigg( \ln(a_{\eps_s}/x) + \int_0^s (\delta +\frac12 \sigma_{\eps_u}^2) du - \int_0^s \sigma_{\eps_u} dB_u \bigg), \qquad t \geq 0, \qquad I^{\star}_{0-}=0.
\end{equation*}
It prescribes to keep the equilibrium production capacity above a regime-dependent barrier at all the times via an upwards reflection: $X^{I^{\star}}_t \geq a^{\star}_{\eps_t}$. In particular, the investment should be the minimal one that prevents the production leaves the region $\{(x,i)\in \rr_+ \times \{1,2\}:\, x \geq a^{\star}_i\}$. The barriers $a^{\star}_i$, $i\in \{1,2\}$, are endogenously determined and uniquely solve a system of nonlinear algebraic equations (cf.\ \eqref{eq-coeff-a} below).
\vspace{0.2cm}

\item[(ii)] The stationary distribution of $(X^{I^{\star}}_t,\eps_t)_{t \geq 0}$ admits a density with respect to the Lebesgue measure, it is explicitly computable (see Corollary \ref{cor:exlicitPihat} below) and $\mathbb{P}(X^{I^{\star}}_{\infty} \geq x) \sim x^{\theta_2}$.
\end{itemize}
\end{theorem}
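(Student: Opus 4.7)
My plan is to follow the three-step constructive scheme announced in the introduction: solve the representative firm's singular control problem for a frozen stationary aggregate $Q$, compute the resulting stationary distribution in closed form, and close the loop via a fixed-point argument.

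\textbf{Step 1 (Optimal control for fixed $Q$).} For a given vector $Q \in \mathbb{R}^2_+$, I would avoid a direct guess-and-verify on the system of variational inequalities associated with $J_{(x,i)}(\cdot,Q)$, because the interconnection through the generator $P$ makes the smooth-fit/smooth-pasting system intractable. Following the approach of \cite{FeRo} (which goes back to \cite{KS84} and \cite{BK97}), I would instead introduce the auxiliary regime-switching optimal stopping problem obtained by differentiating the control problem in the direction of the control (intuitively, the marginal value of one extra unit of capital). Using probabilistic arguments, I would show that its value function is continuous, that the continuation region is delimited in each regime $i$ by a single threshold $a_i(Q)$, and that it inherits $C^1$ regularity across the free boundary. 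The two thresholds will be characterized by a system of algebraic equations coming from value matching and smooth fit. I would then verify that the integral of this stopping value with respect to the state variable coincides with the singular control value function, and that the reflecting investment rule that keeps $X^{I^\star}$ above $a_{\varepsilon_t}(Q)$ is optimal and admissible (admissibility in $\mathcal{A}$ is where Assumption~\ref{ass:assrho} enters, via estimates on $X^{I^\star}$ driven by the explicit representation \eqref{eq-SDE-sol}). This also produces the reflection formula for $I^\star$ given in $(i)$.

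\textbf{Step 2 (Stationary law for fixed $Q$).} With the optimally controlled process $X^{I^\star}$ being a regime-switching geometric Brownian motion reflected upward at the regime-dependent barriers $a_i(Q)$, I would work with $Y_t := \ln X^{I^\star}_t$, so that the joint process $(Y_t,\varepsilon_t)$ is a Markov-modulated drifted Brownian motion reflected at $\ln a_i(Q)$. The invariant density $(g_i)_{i=1,2}$ of the state solves the adjoint system
\begin{equation*}
\tfrac{1}{2}\sigma_i^2 g_i''(y) + \bigl(\delta + \tfrac{1}{2}\sigma_i^2\bigr) g_i'(y) - p_i g_i(y) + p_j g_j(y) = 0, \qquad y > \ln a_i(Q),
\end{equation*}
with matching/boundary conditions at the two reflection points and a normalization. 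The characteristic equation of this linear ODE system is precisely $\phi_1(\theta)\phi_2(\theta) - p_1 p_2 = 0$, whose four roots split into two positive and two negative ones; integrability at $+\infty$ forces the solution to live in the span of the two negative-root exponentials, and the tail behaviour is dictated by the largest negative root $\theta_2$. The boundary conditions at $\ln a_1(Q) \wedge \ln a_2(Q)$ determine the remaining constants, yielding a quasi-explicit density and, after integrating $x$ against it, explicit functions $Q \mapsto M_i(Q) := \mathbb{E}[X_\infty \mid \varepsilon_\infty = i]$. The requirement $\theta_2 < -1$ is precisely what guarantees that the first moment $M_i(Q)$ is finite, and gives the Pareto tail claimed in $(ii)$.

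\textbf{Step 3 (Fixed point).} The equilibrium condition of Definition~\ref{def-MFE}(ii) becomes $Q_i = M_i(Q)$ for $i=1,2$. For existence, I would show that the map $Q \mapsto M(Q)$ sends a suitable compact, convex subset of $\mathbb{R}^2_+$ into itself and is continuous; Brouwer's theorem then delivers a fixed point. Continuity reduces to continuity of $Q \mapsto a_i(Q)$, which follows from the implicit function theorem applied to the algebraic system \eqref{eq-coeff-a} obtained in Step~1. For uniqueness, I would adapt the monotonicity-contradiction argument of \cite{Weintraub22}: using the connection to optimal stopping, one sees directly that $Q_i \mapsto a_i(Q)$ is strictly monotone, since raising $Q_i$ lowers the regime-$i$ price $\eta_i$ in \eqref{eq-eta} and thus makes the firm more reluctant to invest. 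Suppose there were two equilibria $Q^\star \neq \bar Q^\star$; picking the coordinate realizing the maximal ratio and exploiting the monotonicity of both the barriers and of the stationary mean in the barriers yields a contradiction.

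\textbf{Main obstacle.} The routine analytic parts will be the characterization of the stationary density and the Brouwer step. The genuinely delicate point is Step~1: establishing, for the regime-switching optimal stopping problem, that the continuation region is a half-line in each regime and that the value function is $C^1$ across the free boundary, so that the algebraic system defining $a_i(Q)$ has a unique solution with the right monotonic dependence on $Q$. Without a clean probabilistic representation of the stopping value, one would be forced to confront the coupled system of free-boundary ODEs directly, which is exactly what we want to avoid; this is where the probabilistic detour through \cite{FeRo, KS84, BK97} does the real work.
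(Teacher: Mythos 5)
Your proposal follows essentially the same route as the paper: the detour through a regime-switching optimal stopping problem whose integrated value function identifies with the control value (yielding the reflecting barriers $a_i(Q)$), the explicit resolution of the stationary Fokker--Planck system for $\ln X^{I^\star}$ with tail governed by the largest negative root $\theta_2$ of $\phi_1\phi_2 - p_1p_2 = 0$, and the Brouwer-plus-monotonicity fixed-point argument for existence and uniqueness of $Q^\star$. The only minor inaccuracy is the root count for the quartic $\bar\phi$: it has two negative roots, one root at zero, and one positive root (not two positive and two negative), but this does not affect your argument since integrability still confines the solution to the span of the negative-root exponentials.
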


The optimal investment policy is characterized by \emph{small-scale actions} and \emph{large-scale actions}. The former are employed as soon as, in absence of a regime switch, the production capacity $X^{I^{\star}}_t$ attempts to fall below the boundary $a^{\star}_{\eps_t}$. The purpose of these continuous actions is to make sure, with a minimal effort, that $X^{I^{\star}}_t$ is kept inside the interval $[a^{\star}_{\eps_t}, \infty)$. On the other hand, large-scale actions are lump-sum investments whose purpose is to bring $X^{I^{\star}}_t$ back to level $a^{\star}_{\eps_t}$ through a jump. Note that, those lumpy interventions are only needed at times of jumps of the macroeconomic regime switching process $\eps$, and possibly at initial time. Both small-scale and large-scale actions can be observed in Figure \ref{fig:dynamics}, introduced and described in the next section. 

\begin{remark}
\label{rem:modelextension}
It is worth noticing the existence and uniqueness result could actually be derived in a more general setting. For example, the instantaneous profit function of the representative company could have been taken to be a general concave function of the production capacity (not necessarily quadratic), increasing in the price variable and satisfying suitable growth conditions, and the inverse demand function to be a positive, nonincreasing function of the aggregate production (not necessarily of power type). However, since the resulting equations would become more complex and, on the other hand, no additional insights would be added, we stick on the linear-quadratic framework presented in this section.
\end{remark}


\section{Numerical Analysis and Economic Implications}
\label{sec-study}

\subsection{Investment dynamics}
\label{ssec:dynamics}

We illustrate the behaviour of the investment dynamics induced by our model\footnote{Throughout Section \ref{sec-study} we denote, with a slight abuse of notation,  $X_{\infty} := X_\infty^{I^{\star}}$.}. Figure~\ref{fig:dynamics}~(a) shows a single trajectory of the process $X$. As expected, the capacity is maintained over the threshold $a^\star_1$ (blue) until the state of the economy switches to State 2. At that point, the decision-maker stops compensating the depreciation of the assets and let it falls until it reaches the lower level of investment threshold $a^\star_2$ (red). We observe in Figure~\ref{fig:dynamics}~(b) that the long-run averages of the process $X$ reflected on the boundaries $a^\star_i$ tends to the stationary production capacities level $Q^\star_i$. Figure~\ref{fig:a1a2}~(a) presents the investment thresholds $a^\star_i$ as a function of the volatility $\sigma_1$ of State 1. We observe that as long as $\sigma_1$ is smaller than $\sigma_2$, $a^\star_1$ is larger than $a^\star_2$.  This means that the higher the volatility, the longer the decision-maker waits to invest, which is consistent with the standard real option theory results on the value to wait (see \cite{McDonald86}). Figure~\ref{fig:a1a2}~(b) gives the (stationary) probability for a firm to be stuck between the two investment thresholds $a^\star_1$ and $a^\star_2$, as well as the percentage of capacity $\chi_{\infty}$ that at equilibrium is stuck between those two thresholds. As expected, the probability is decreasing for $\sigma_1$ lower than $\sigma_2$ and increasing afterwards because the interval $(a^\star_1,  a^\star_2)$ is first reducing and then expanding. We observe that although the probability is increasing for $\sigma_1 > \sigma_2$, the share of capacity stuck in the corridor $(a^\star_1,  a^\star_2)$ reaches a maximum and then decreases. It means that there is an increasing proportion of firms in that interval with always smaller sizes. This observation is linked to our subsequent analysis of the distribution of the firms' size and of the percentage of capacity in the tail of the distribution.

\begin{figure}
\begin{center}
\begin{tabular}{c c c c}
(a) & (b)  &\\
\includegraphics[width=0.45\textwidth]{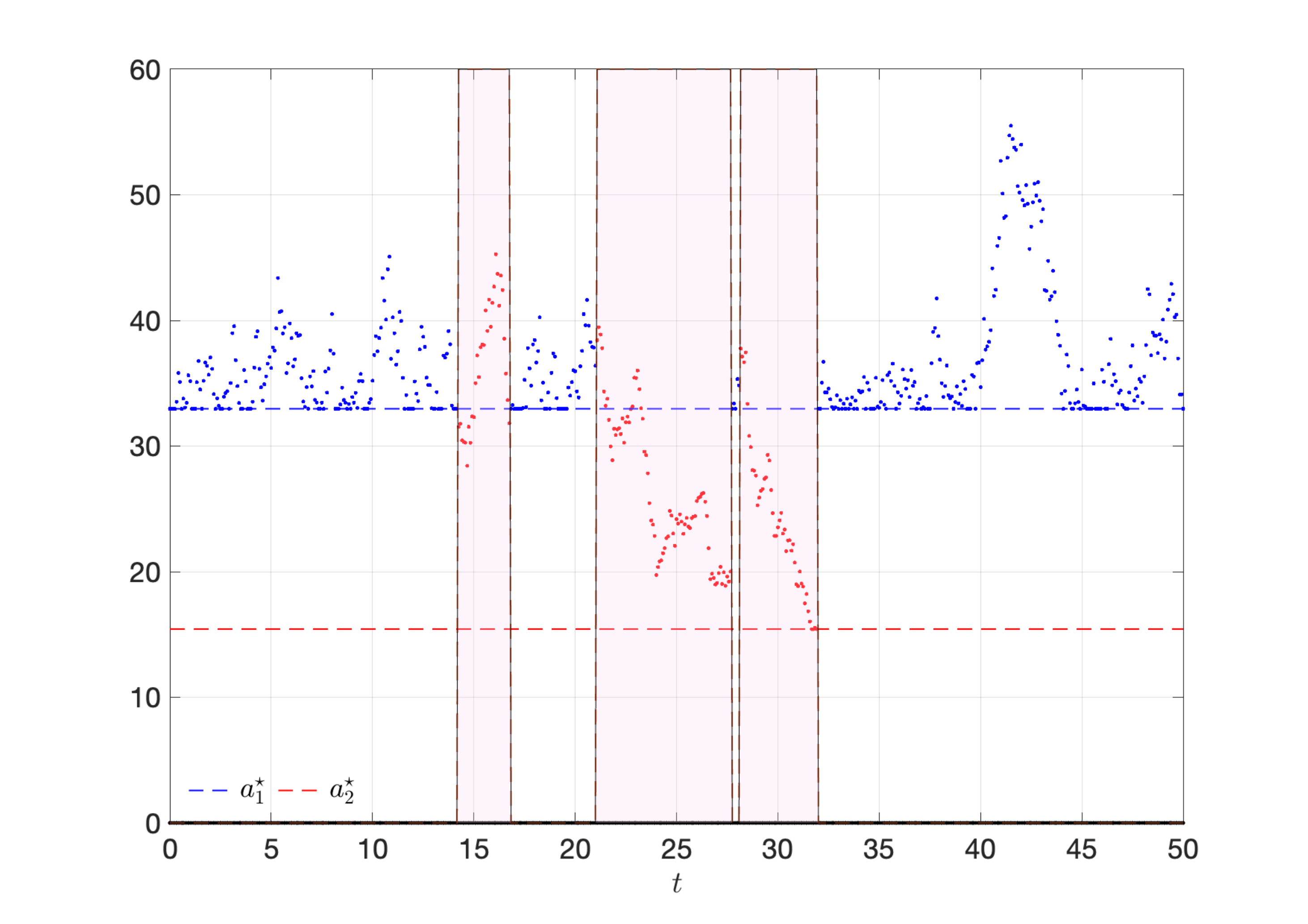} &
\includegraphics[width=0.45\textwidth]{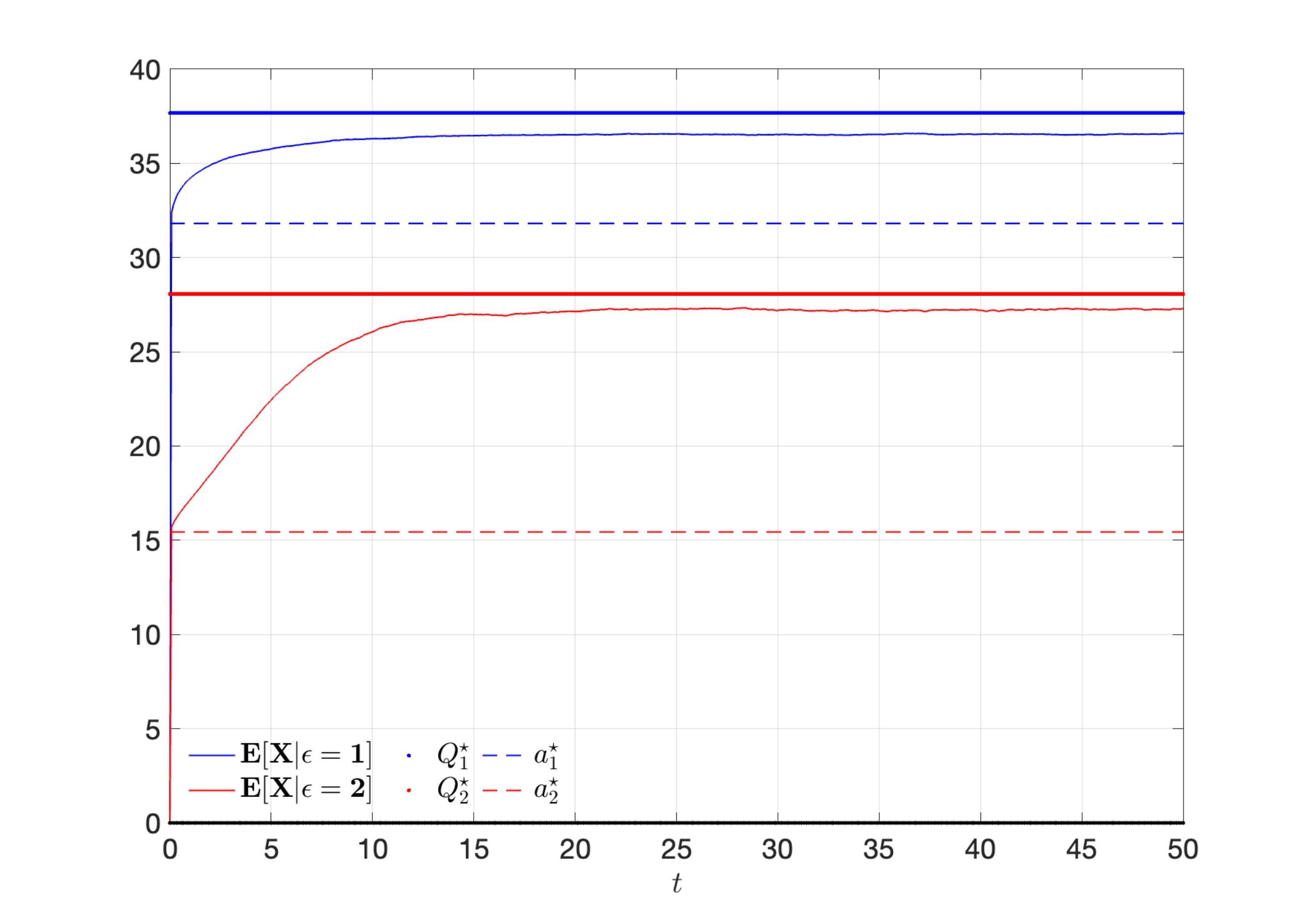}\\
\end{tabular}
\caption{ {\small (a) A trajectory of the state process $X$;  (b)  $\mathbb{E}[ X_\infty | \eps_\infty=1]$ and $\mathbb{E}[ X_\infty | \eps_\infty=2]$. Parameters: $\delta=0.1$, $\rho=0.08$,  $\kappa=10$, $c=0.1$, $\phi_1=10$, $\phi_2=5$, $\zeta_1=\zeta_2=1$, $\sigma_1=0.2$, $\sigma_2=0.15$, $\alpha=0.5$, $p_1=1/10$, $p_2=1/5$.}}
\label{fig:dynamics}
\end{center}
\end{figure}

\begin{figure}
\begin{center}
\begin{tabular}{c c }
(a) & (b)  \\
\includegraphics[width=0.45\textwidth]{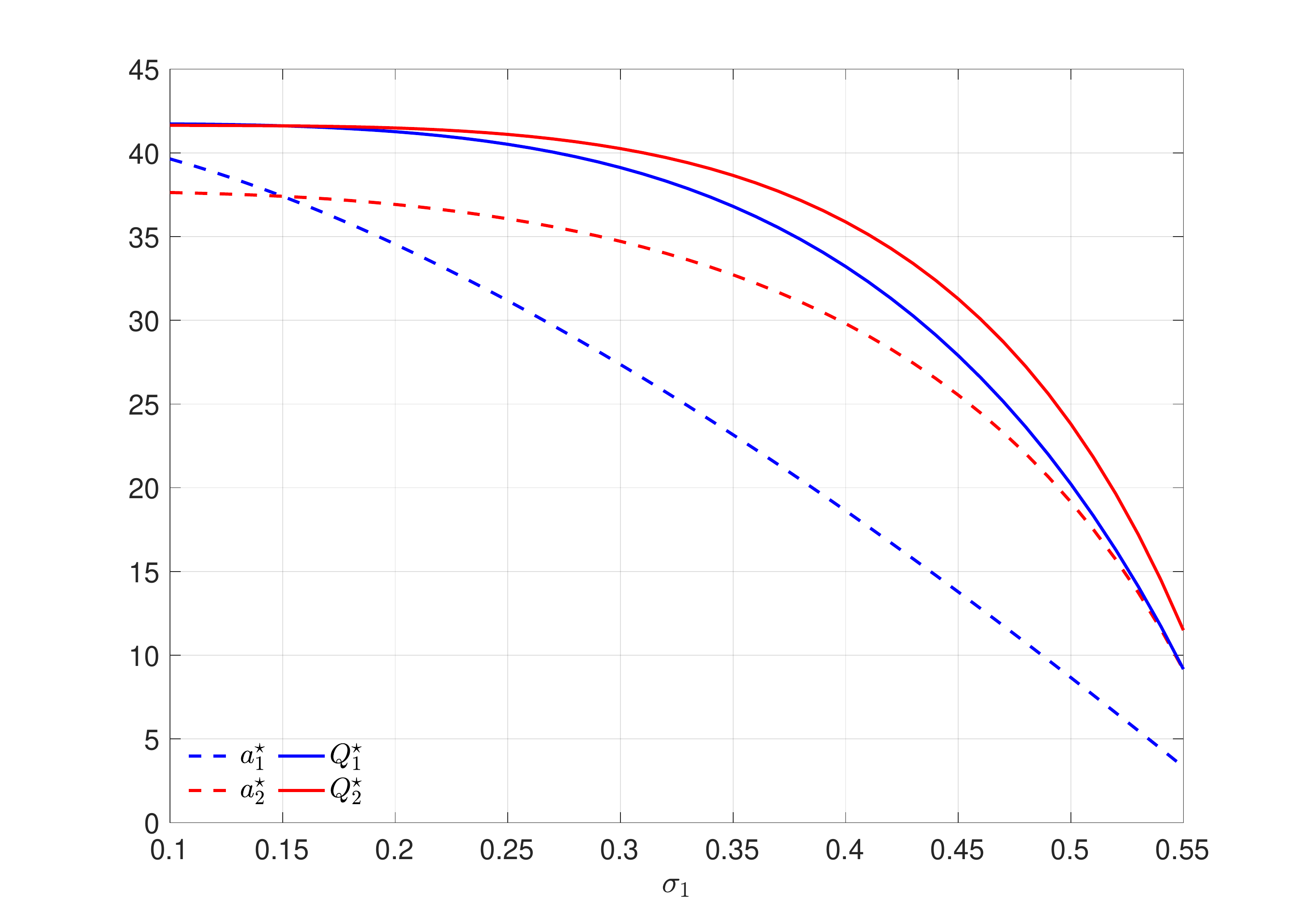} &
\includegraphics[width=0.45\textwidth]{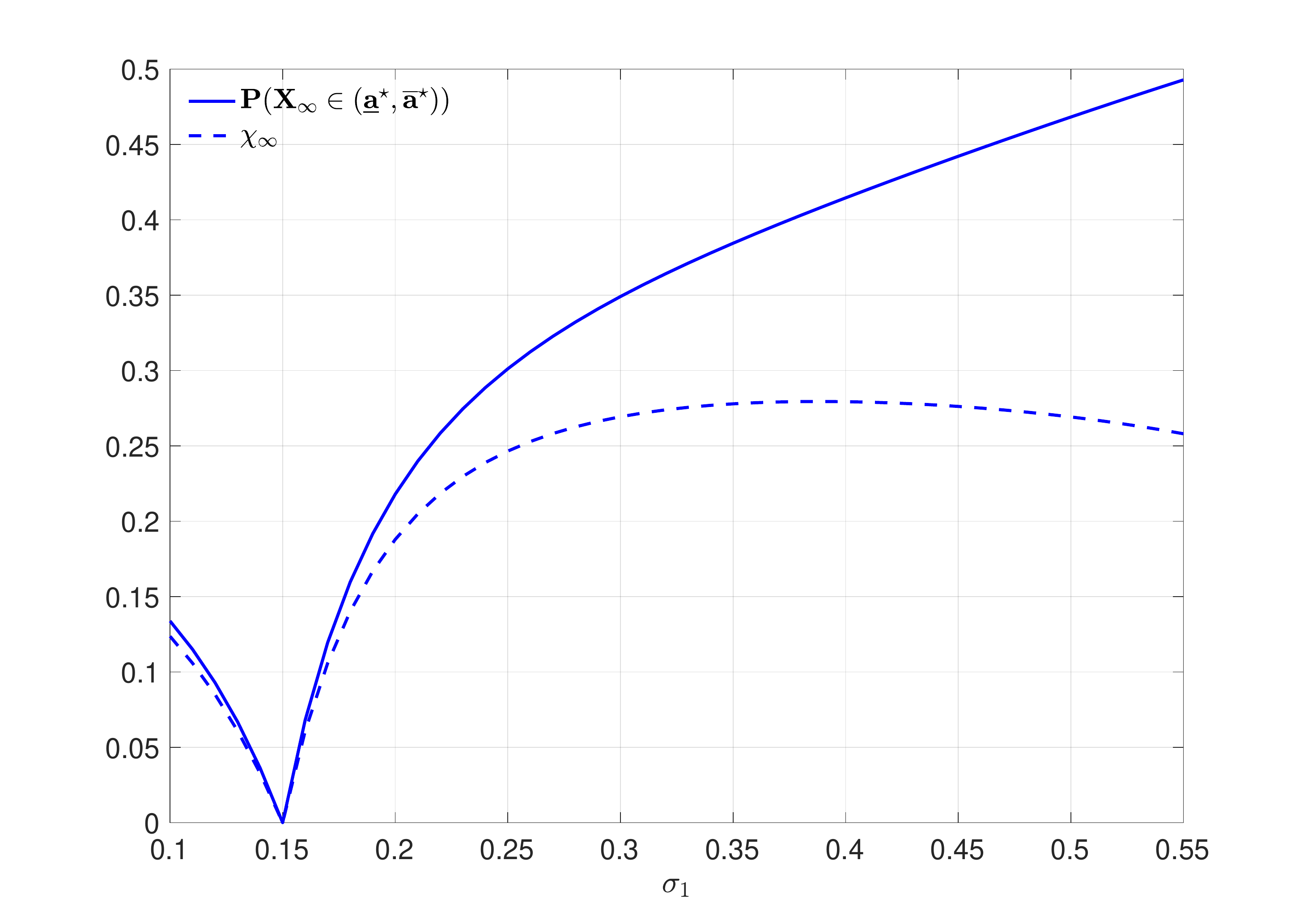}
\end{tabular}
\caption{  {\small As a function of the volatility $\sigma_1$ in State 1: (a) Investment thresholds $a_1^\star$, $a_2^\star$ and long-run equilibrium capacities $Q_1^\star$, $Q_2^\star$; (b) Ratio $\chi_\infty$ and $\mathbb P\big( X_\infty \in (\underline{a}^\star, \overline{a}^\star) \big)$, where $\underline{a}^\star := \min\{ a^\star_1, a^\star_2 \}$ and $\overline{a}^\star := \max\{ a^\star_1, a^\star_2 \}$. Parameters: $\delta=0.1$, $\rho=0.08$,  $\kappa=10$, $c=0.1$, $\phi_1=10$, $\phi_2=10$, $\zeta_1=\zeta_2=1$, $\sigma_2=0.15$, $\alpha=0.5$, $p_1=1/10$, $p_2=1/5$.}}
\label{fig:a1a2}
\end{center}
\end{figure}

\begin{figure}
\begin{center}
\begin{tabular}{c c c c}
(a) & (b)  &\\
\includegraphics[width=0.45\textwidth]{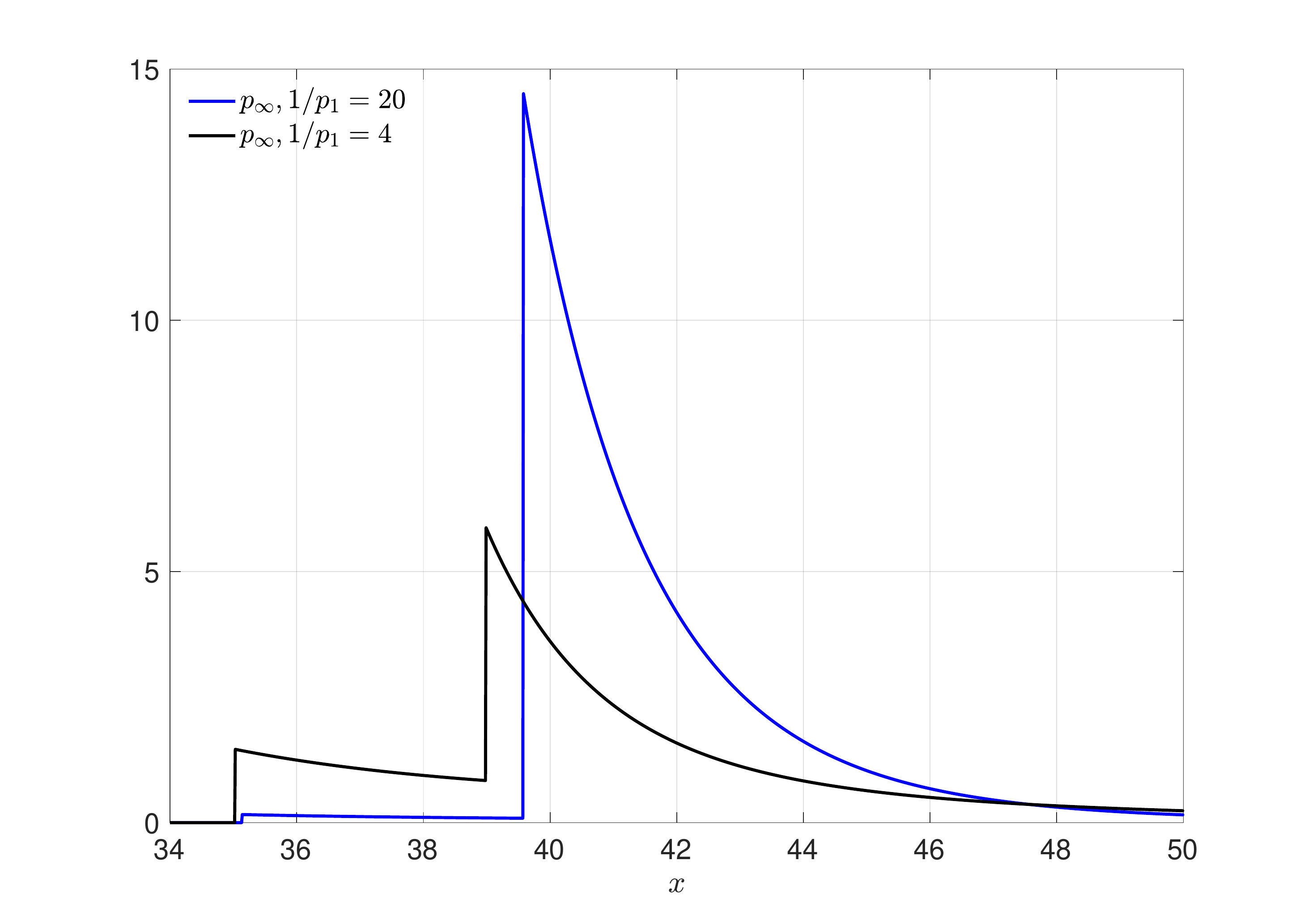} &
\includegraphics[width=0.45\textwidth]{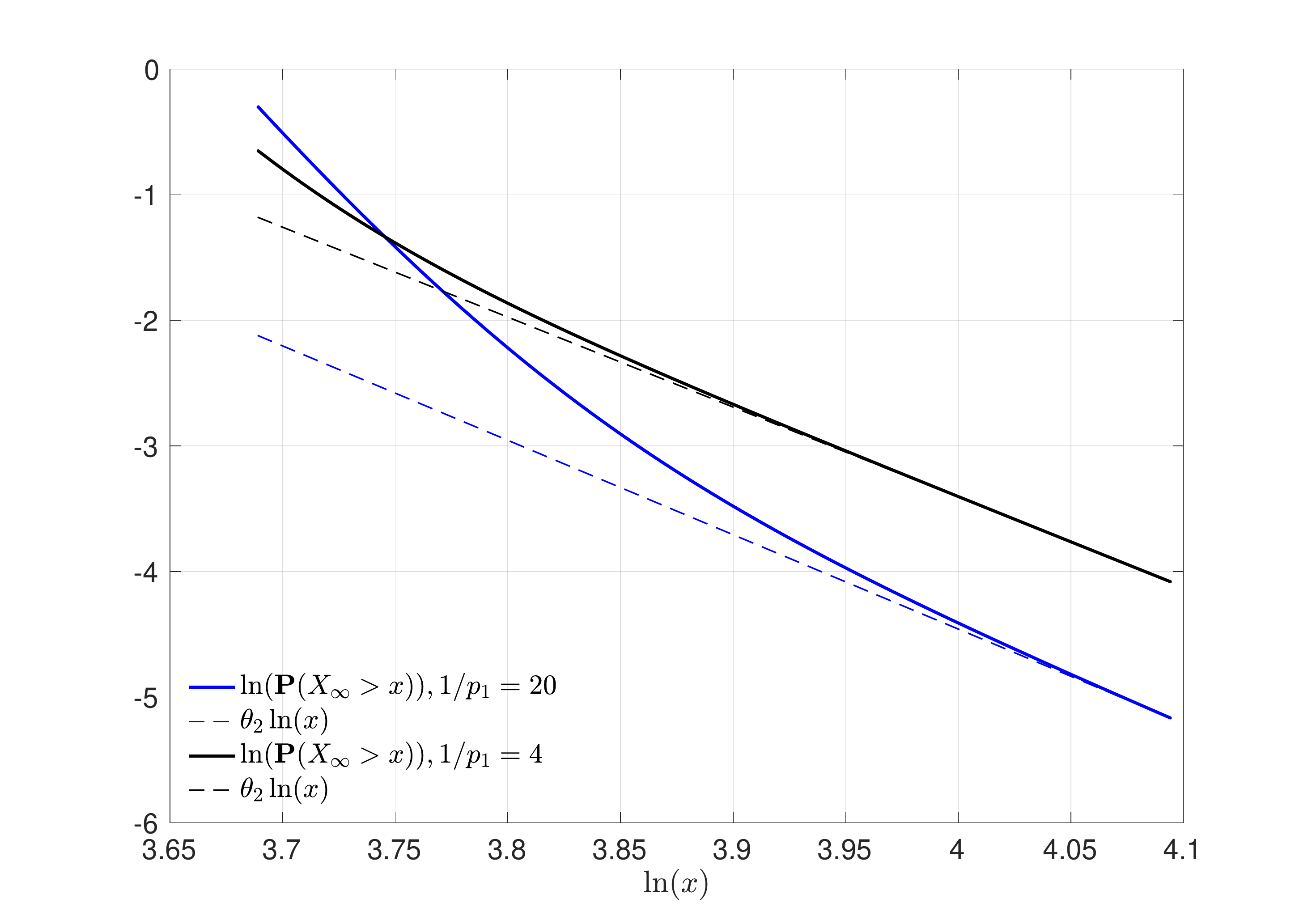}
\end{tabular}
\caption{ {\small (a)  Marginal stationary density $p_\infty(x)$; (b) Tail density. Parameters: $\delta=0.1$, $\rho=0.08$,  $\kappa=10$, $c=0.1$, $\phi_1=10$, $\phi_2=5$, $\zeta_1=\zeta_2=1$, $\sigma_1=0.2$, $\sigma_2=0.15$, $\alpha=0.5$, $p_2=1/5$. The value $p_1=1/20$ leads to $\theta_2=-7.51$ and the value $p_1=1/4$ leads to $\theta_2=-7.16$.}}
\label{fig:densities}
\end{center}
\end{figure}

\subsection{Distribution of firms' size}\label{ssec:distrib}

As the equilibrium dynamics of the representative firm are of reflected geometric Brownian motion type, it is not surprising to observe power laws for their stationary distribution. This observation can be found in the survey of \cite{Gabaix99} on Pareto's law in Economics with application in spatial economy of cities and in \cite{deWit05}'s paper providing an overview of possible probability distributions of firms' size. In our model, because of a two-regime common macroeconomic shock affecting all firms, the induced stationary densities $p_\infty(\cdot,i)$ shown in Figure~\ref{fig:densities}~(a) exhibit a bimodal shape. The lower $1/p_1$, the lower the time that is spent by the system in the unstable State 1 and thus, the more pronounced is the bimodal shape of the stationary density. However, the coefficient of the tail distribution  is fully given by the parameter $|\theta_2|$ in \eqref{eq:thetas} (see~Figure~\ref{fig:densities}~(b)). Since this is a solution to the polynomial $\bar \phi(x) := \phi_1(x)\phi_2(x) - p_1p_2$ with $\phi_i(x) = \frac12 \sigma_i x^2 + \big(\delta+\frac12 \sigma_i^2\big)x-p_i$, the parameter $\theta_2$ only depends on the volatilities of the states, the depreciation rate of the capital and on the Markov chain intensities of jumps. Convergent empirical studies attest the presence of a power law in the distribution of the size of firms. At the aggregate level, \cite{Axtell01}, using a large sample of firms' size, finds a coefficient close to one. The value of this coefficient can be explained by small imitation cost across sectors and firms, according to  the model of endogenous growth firm based on innovation developed by \cite{Luttmer07}. At the sectoral level it is possible to exhibit differences in power law coefficients. \cite{Rossi07} show that firms' size's tail distribution depends on the capital intensity usage, both physical and human capital. 

In our model, when intensities of switches are small ($p_1=p_2 \approx 0$), we recover the power law coefficient as in \cite{Luttmer07} (see p.~1125 therein), since then $\theta_2 \approx - 1 - 2  \delta/\min\{\sigma_1; \sigma_2\}$. And thus, we find that in sectors where capital slowly erodes (i.e.\ $\delta$ is small enough), the power law turns closer to a Zipf's law. Nevertheless,  even in a capital intensive industrial sector like electricity generation, we have roughly $\delta = 1/40$ per year and a volatility of production of $\sigma_{1,2} \approx 0.15$ per year (unplanned outage rate per year), which leads to $|\theta_2| \approx 1.3$. It is already a significant deviation from one. In contrast, our model can induce large deviation from $\theta_2 = -1$. Assume the same average price in both the regimes of the economy $\zeta_1 = \zeta_2$ and large potential differences in volatilities: $\sigma_1=0.1$ while $\sigma_2 =0.3$. State 1 appears as a stable state whereas State 2 is more unstable. In that case, the increasing persistence of State 1 by making $p_1$ smaller leads to large power law coefficients $|\theta_2|$, close to 7 (see Figure~\ref{fig:densities}~(b)). Empirical results at the industry level show possible large negative exponent, like in \cite{Halvarsson14}(Figure~1) where a coefficient around $-4$ can be found in some industries (see also \cite{Bee17}).

\subsection{Market concentration and fragmentation}\label{ssec:market}

The analysis of market concentration using mean-field stationary models has been initiated by \cite{Adlakha15}. In their general setting, the authors find as a sufficient condition for the existence of a stationary mean-field equilibrium the light-tail of the stationary distribution of players' size. In our model, we have seen that firms' size exhibit fat-tails, showing that the sufficient condition of \cite{Adlakha15} is not necessary. Besides, we introduce two indices to assess the effect of stability on market concentration in the presence of a continuum of firms. The first one is a limit version of the Herfindahl-Hirschman  index (HHI). The second one is a Gini index. The HHI index, used in market concentration analysis by most regulators, is defined as $H_n := \sum_{i=1}^n s_i^2$, where $n$ is the number of firms serving the market and $s_i$ is the market share of firm $i$. Highly fragmented market is obtained when each firm has the same market share and highly concentrated market is obtained with one firm holding all the market. In the first case, one has $H_n = 1/n$ while in the second case, $H_n = 1$. Note that in the $n$ firm case, a fully fragmented market corresponds to an average market share of $1/n$ and a zero variance in market share between firms, while a highly concentrated market corresponds also to an average market share of $1/n$, but with a variance $\mathbb V_n := 1/n - 1/n^2$. Thus, when the market is highly concentrated we note that $\mathbb V_n/\mathbb E[s_i] = 1 - 1/n$ admits a finite limit when $n$ gets large. Those remarks suggest to use as an index of market concentration the ratio between the variance of the firms' size and the square of its expectation at the stationary equilibrium. Another way to measure market concentration, already suggested in \cite{Hopenhayn92}, is to consider a Gini index. For a given quantile $q \in (0,1)$, define $x(q)$ the lowest $x$ such that $F(x) = \mathbb P(X_\infty\leq x) = q$. Then we define the Gini curve by $\bar Q(q) = \mathbb E[X_\infty | X_\infty \leq x(q)]/{\overline{Q}}^\star$~$\in (0,1)$, where we recall that ${\overline{Q}}^\star$ is the equilibrium average production across the regimes. Finally, the Gini index of market concentration is defined as 
\begin{align}
H := \int_{0}^1 \big| q -  \bar Q(q) \big|  dq.
\end{align}
The H-index measure deviation from uniform distribution of market shares as measured by the capacity held by firms at each level of quantiles. A fully fragmented market would yield a zero H-index, whereas a fully concentrated market served by a single monopolistic firm would induce an H-index of 1/2.

\begin{figure}
\begin{center}
\begin{tabular}{c c}
(a) & (b) \\
\includegraphics[width=0.45\textwidth]{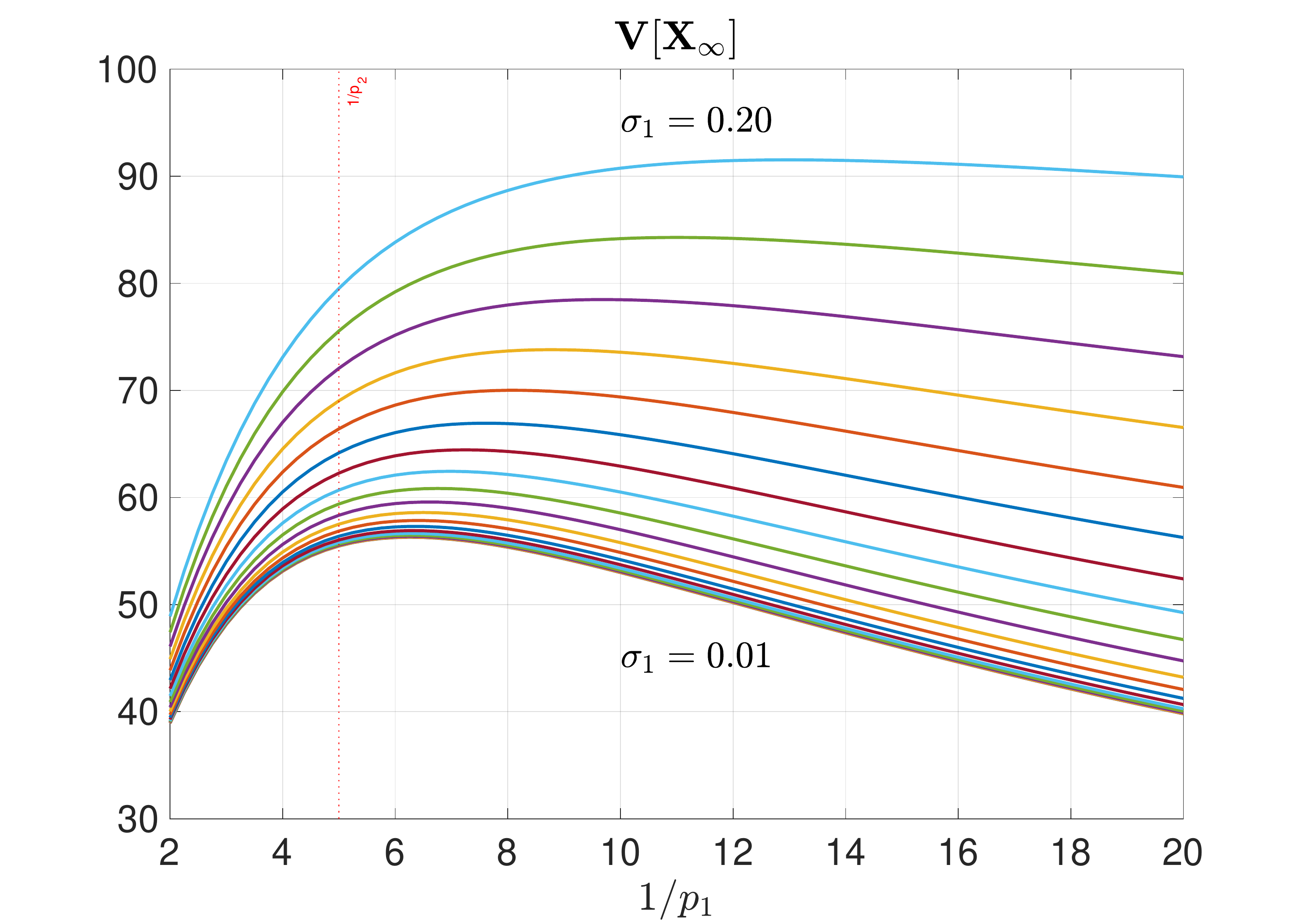} &
\includegraphics[width=0.45\textwidth]{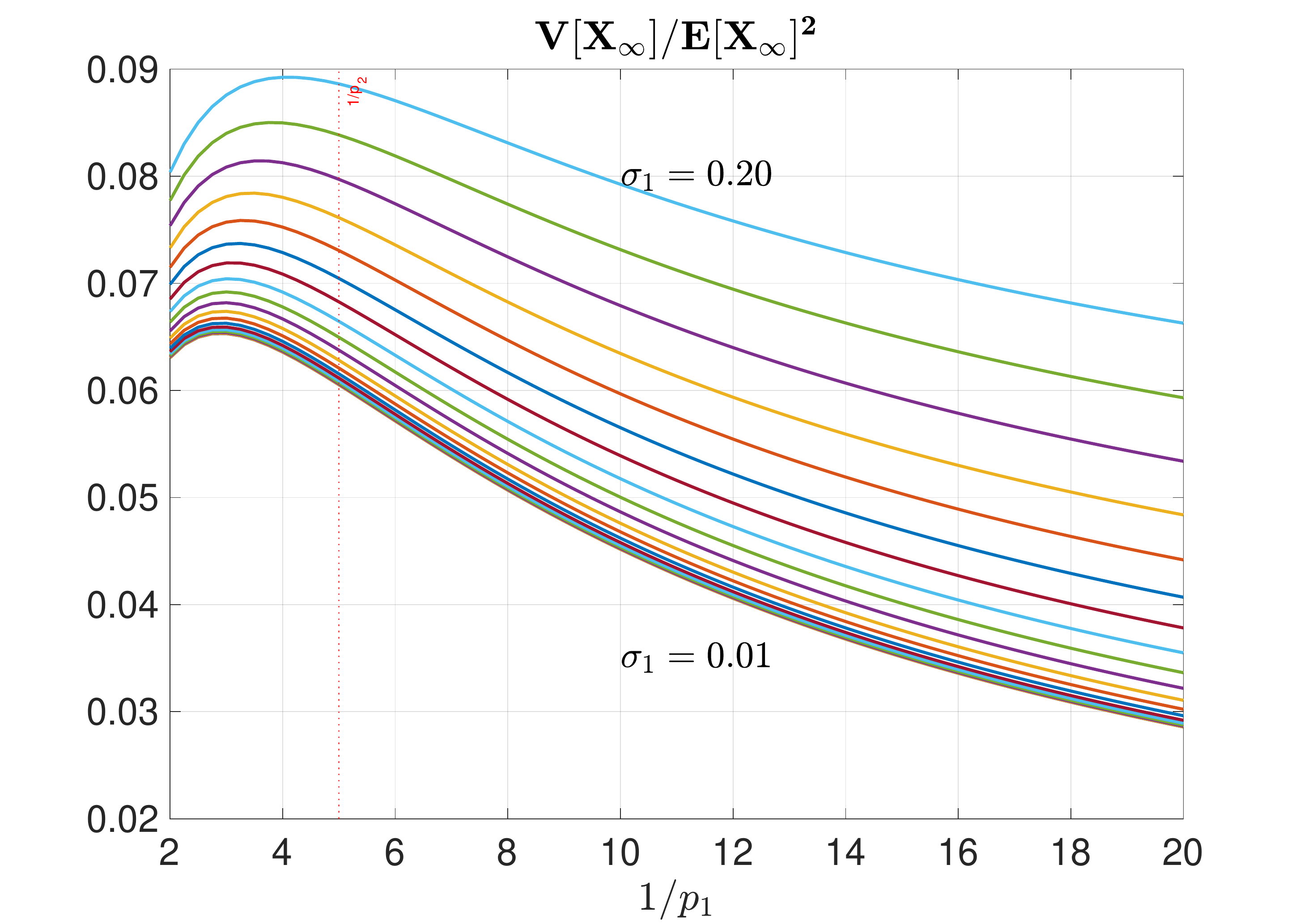}  \\
(c) & (d) \\
\includegraphics[width=0.45\textwidth]{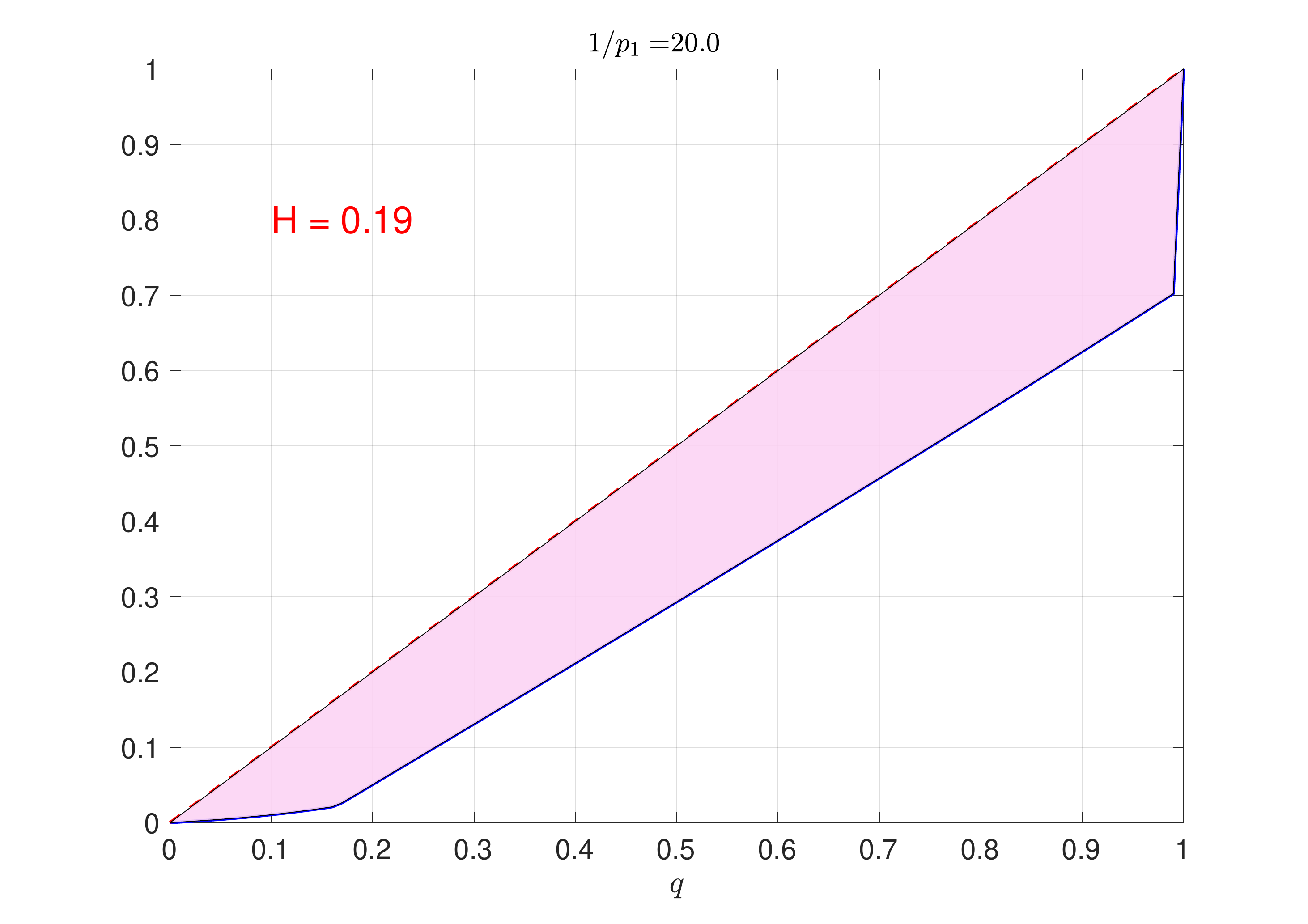} &
\includegraphics[width=0.45\textwidth]{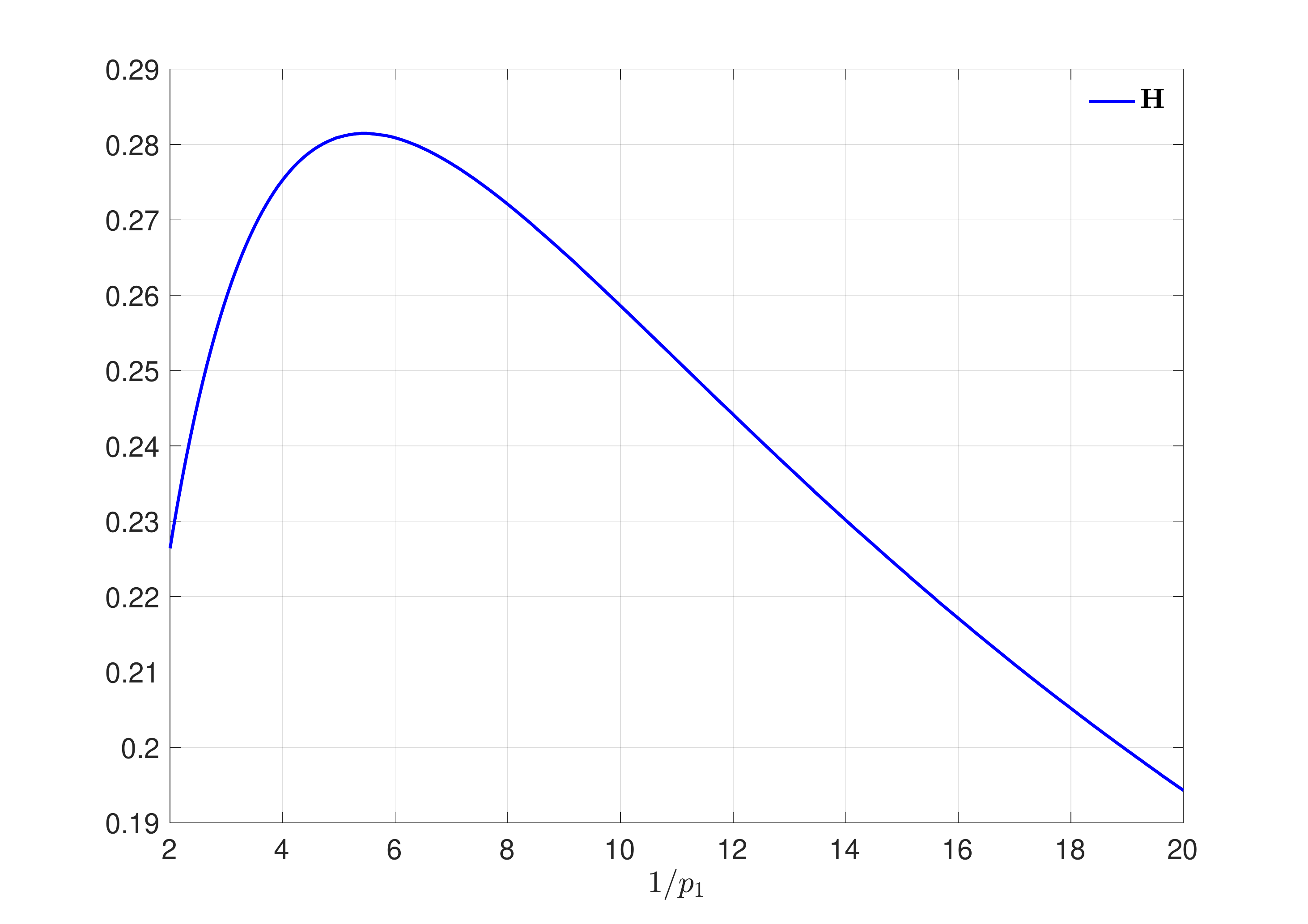} 
\end{tabular}
\caption{{\small (a) $\mathbb V[X_\infty]$ as a function of $1/p_1$; (b) $\mathbb V[X_\infty]/\mathbb E[X_\infty]$ as a function of $1/p_1$; (c)  $\bar Q(q)$ for $1/p_1 =20$ (blue curve), $H$ is the area of pink surface; (d) $H$ as a function of $1/p_1$. Parameters: $\delta=0.1$, $\rho=0.08$,  $\kappa=10$, $c=0.1$, $\phi_1=10$, $\phi_2=5$, $\zeta_1=\zeta_2=1$,  $\sigma_2=0.2$, $\alpha=0.5$, $p_2=1/5$.}}
\label{fig:frag}
\end{center}
\end{figure}

Figure~\ref{fig:frag} provides both the variance of the stationary process $X$ (see (a)) and the ratio of its variance with its expectation (see (b)) as a function of both the volatility $\sigma_1$ of State 1 and its persistence $1/p_1$. We observe that both are non-monotonic function of $1/p_1$. But when $1/p_1$ becomes large, both market concentration and variance decrease, meaning that market concentration reduces with more stable State 1. Figure~\ref{fig:frag}~(c) and~(d) provide the evaluation of the market concentration based on the Gini index. If a Gini index of $0.19$ is not a clear indication of market concentration, Figure~\ref{fig:frag}~(a) indicates that in that situation, less than 1 firm over a thousand holds $30\%$ of the total capacity. Besides, we observe that both indicators, HHI and Gini, present consistent results.  The Gini index reaches a peak close to $1/p_2$ and then decreases. Short periods of high prices compared to low prices tend to increase market concentration, while when high price periods become longer, market concentration decreases. Thus, longer period of stable economy tends to lower market concentration or, if we state this result in terms of instability, instability increases market concentration.

\subsection{Cost of crisis and benefit of sustainable growth}\label{ssec:crisis}

Consider the value $V^\star$ defined as
\begin{align*}
V^\star :=  \sum_{i=1}^2\int_0^{+\infty} V(x,i) p_\infty(dx,i).
\end{align*}

The value $V ^\star$ measure the firms expected profit at the stationary equilibrium. We want to analyze the effect of volatilities $\sigma_i$ and intensities of crisis $p_i$ on $V^\star$. Indeed, this point relates to the remark formulated by \cite{Lucas77} (pp.\ 25-31) on the cost of volatility growth. Using a simple model of intertemporal maximization of utility of a risk-averse representative agent, Lucas finds that excessively high risk-aversion would be necessary to justify the economic interest to hedge society against macroeconomic fluctuations. This result was exemplified using US post-world war II GDP. It was later reassessed in the macroeconomic literature, in particular in \cite{Epaulard03}, where the authors use more recent growth data and an endogenous growth model. Nevertheless, they still reach the conclusion of a small macroeconomic cost of growth rate fluctuations. Our model allows to investigate at a sectoral level the relative effects of Gaussian fluctuations compared to regime-switching from good state to bad state. Thus, we can measure how the existence of a persistent state of the economy with low selling prices affects the value of firms. We reduce the number of parameters characterizing a state to a vector $\nu_i := (\sigma_i,p_i,\phi_i)$ so that $\zeta_1=\zeta_2=1$. To make things comparable, we consider $V^\star(\nu_1,\nu_2)$ as a function of the two-state parameters $\nu_i=(\sigma_i,p_i,\phi_i)$. Then, we evaluate the elasticities of the stationary value  $V^\star$ w.r.t.\ the State 1 volatility~$\sigma_1$, intensity of switch~$p_1$ and the average level of price~$\phi_1$ at the point $(\nu_2,\nu_2)$. Having quasi-closed form expression of $V^\star$, we can compute the elasticities
\begin{align}\label{eq:elas}
\chi_{\sigma_1}(\nu_2) :=& - \frac{\sigma_2}{V^\star(\nu_2,\nu_2)} \frac{\partial V^\star}{\partial \sigma_1}(\nu_2,\nu_2), \quad
\chi_{p_1}(\nu_2) := - \frac{p_2}{V^\star(\nu_2,\nu_2)} \frac{\partial V^\star}{\partial p_1}(\nu_2,\nu_2), \\
\chi_{\phi_1}(\nu_2) :=&  \frac{\phi_2}{V^\star(\nu_2)} \frac{\partial V^\star}{\partial \phi_1}(\nu_2,\nu_2), \nonumber
\end{align}
upon assuming that State 1 is slightly better than the State 2 (that is, $\phi_1$ slightly larger than $\phi_2$, $\sigma_1$ slightly lower than $\sigma_2$, and $p_1$ slightly lower than $p_2$). For instance, taking $\delta=1/10$, $\rho=0.08$, $\alpha=1/2$, $c=0.1$, $\kappa=10$ and $\nu_2 = (0.2,1/10,15)$, we find
\begin{align}
\chi_{\sigma_1}(\nu_2) = 0.08, \quad
\chi_{p_1}(\nu_2) = 0, \quad
\chi_{\phi_1}(\nu_2) =  1.06.
\end{align}

\begin{table}[tbh!]
\center
\begin{tabular}{l l | c  c }
$\sigma_2$ & $\phi_2$ & $\chi_{\sigma_1}(\nu_2)$ &  $\chi_{\phi_1}(\nu_2)$ \\
& & & \\ \hline
& & & \\ 
$0.1$ & $10$ & $0.004$ & $1.1$ \\
$0.1$ & $15$ & $0.004$ & $1.07$ \\
$0.2$ & $10$ & $0.08$ & $1.09$ \\
$0.2$ & $15$ & $0.08$ & $1.06$ \\
$0.3$ & $10$ & $0.55$ & $1.08$ \\
$0.3$ & $15$ & $0.55$ & $1.05$ \\
\end{tabular}
\caption{Elasticities of $V^\star$ at $\nu_2 = (\sigma_2,p_2,\phi_2)$ w.r.t $\sigma_1$ and $\phi_1$. Other parameters value: $\delta=1/10$, $\rho=0.08$, $\kappa=10$, $c=0.1$, $\zeta_1=\zeta_2=1$, $p_2=1/10$.}\label{tab:vstar}
\end{table}

Note that $\partial_{p_1} V^\star(\nu_2,\nu_2) = 0$, because when taking the partial derivative w.r.t.\ $p_1$, and calculating it at $(\nu_2,\nu_2)$, the parameters $(\sigma_i,\phi_i)$, $i=1,2$, are the same across the regimes. Increasing the volatility of State 1 by 1\% decreases the stationary value of the firm by 0.08\%, but reducing the price by 1\% cuts the value $V^\star$ by nearly the same amount. Table~\ref{tab:vstar} provides a few other values of elasticities when $\sigma_2$ and $\phi_2$ vary. We observe a quasi-constant elasticity of $V^\star$ w.r.t.\ $\phi_1$ close to one and a lower elasticity w.r.t.\ the volatility $\sigma_2$, but sharply increasing. It means that the marginal cost of volatility is increasing while the marginal gain from stability is decreasing.

\section{Equilibrium Construction and Proof of Theorem \ref{thm:Equilibrium}}
\label{sec:Equiconstr}

In this section, we provide a constructive proof of Theorem \ref{thm:Equilibrium} according to the following three-step recipe (which will require the statement and proof of auxiliary intermediate results):
\begin{enumerate}
    \item \textbf{First step} (Section \ref{ssec-step1}): Given $Q=(Q_1,Q_2) \in \rr^2_+$, we determine $I^{\star}(Q)$ such that 
    \begin{equation}
    \label{eq-goal-step1}
    J_{(x,i)}(I^{\star}(Q),Q) = \sup_{I \in \mathcal{A}} J_{(x,i)}(I,Q) =: V^Q(x,i).
    \end{equation}
    \item \textbf{Second step} (Section \ref{ssec-step2}): Given $Q=(Q_1,Q_2) \in \rr^2_+$ and $(I^{\star}_t(Q))_{t\geq 0}$ from the first step, we determine the stationary distribution for $(X^{I^{\star}(Q)}_t,\eps_t)_{t \geq 0}$, denoted by
    \begin{equation}
    \label{eq-goal-step2}
    \big(p_\infty^Q(dx,i)\big)_{i=1,2}.
    \end{equation}
    \item \textbf{Third step} (Section \ref{ssec-step3}): we look for $Q^{\star}=(Q^{\star}_1,Q^{\star}_2) \in \rr^2_+$ solving the fixed-point problem
    \begin{equation}
    \label{eq-goal-step3}
    Q_i^{\star} = \frac{1}{\pi_i} \int_0^\infty x \, p^{Q^{\star}}_\infty(dx,i), \qquad i=1,2,
    \end{equation}
    with $(p_\infty^Q(dx,i))_{i=1,2}$ from the second step. By construction, the pair $(I^{\star}({Q^{\star}}),Q^{\star})$ is a MFE. 
\end{enumerate}


\subsection{First step: Solving the singular control problem}
\label{ssec-step1}

Throughout this section, we let
\begin{equation*}
Q=(Q_1,Q_2) \in \rr^2_+    
\end{equation*}
be given and fixed and solve the singular stochastic control problem \eqref{eq-goal-step1}. To simplify notation, we will omit the dependence on $Q$ and write, for example, $V(x,i)$ instead of $V^Q(x,i)$, $J_{(x,i)}(I)$ instead of $J_{(x,i)}(I,Q)$, and $I^{\star}$ instead of $I^{\star}(Q)$. 

First, in Section \ref{sssec-step1a} we prove a preliminary verification theorem for the singular control problem. This is then exploited in Section \ref{ref:OS-problem}, where we characterize $V$ in terms of the value function $v$ of a suitable optimal stopping problem with regime switching. Finally, in Section \ref{sssec-step1b}, we determine (semi-)closed expression for $v$ (hence, for $V$) and a system of nonlinear algebraic equations solved by the endogenous thresholds triggering the optimal stopping rule and the optimal investment strategy.

\subsubsection{A preliminary verification theorem} 
\label{sssec-step1a}

Recall that we are dealing with the singular stochastic control problem with regime-switching (cf.\ \eqref{eq-goal-step1})
\begin{equation}
\label{eq-pb-SC}
V(x,i) = \sup_{I \in \mathcal{A}} J_{(x,i)}(I), \qquad (x,i) \in \rr_+ \times \{1,2\},
\end{equation}
with $J_{(x,i)}(I)=J_{(x,i)}(I,Q)$ as in \eqref{eq-J}. We start by deducing the dynamic programming equation that we expect to be associated to problem \eqref{eq-pb-SC}. In the sequel, we denote by $\bar 0$ the control which is identically zero. Heuristically, the dynamic programming principle suggests that, for a small time step $\Delta t$, 
\begin{equation*}
V(x,i) \geq \eee_{(x,i)} \Bigg[e^{-\rho \Delta t} V\big(X^{\bar 0}_{\Delta t},\eps_{\Delta t}\big) + \int_0^{\Delta t} e^{-\rho t} \Big(X^{\bar 0}_t \eta_{\eps_t} - c \big(X^{\bar 0}_t\big)^2 \Big) dt\Bigg],
\end{equation*}
from which, applying Dynkin's formula to  $\eee_{(x,i)}[e^{-\rho \Delta t} V(X^{\bar 0}_{\Delta t},\eps_{\Delta t})]$, dividing by $\Delta t$, letting $\Delta t \to 0$, and assuming that the mean-value and dominated convergence theorems hold, we find
\begin{equation}
\label{eq-DPP1}
(\mathcal{L}-\rho)V(x,i) + x\eta_i - cx^2 \leq 0, \qquad (x,i) \in \rr_+ \times \{1,2\}.
\end{equation}
Here, the infinitesimal generator $\mathcal{L}$ is defined as
\begin{equation}
\label{eq-mathcalL}
\mathcal{L}w(x,i) := \frac12 \sigma_i^2 x^2 w''(x,i) - \delta x w'(x,i) + \sum_{j=1}^2 P_{ij} w(x,j), \qquad w(\cdot,i) \in C^2(\rr_+), \, i \in \{1,2\}.
\end{equation}
In \eqref{eq-mathcalL}, $P$ denotes the generator matrix of the Markov chain $\eps$, see \eqref{eq-Pi}, and differentiation is always meant with respect to $x$. On the other hand, investing $h>0$ at time $t=0$ and then following an optimal control rule (if one does exists) gives
\begin{equation*}
V(x,i) \geq V\Big(x +  x (e^{ h}-1), i\Big) -  \kappa  x (e^{ h}-1 ) \sim V(x +  h x, i) -  \kappa hx, 
\end{equation*}
and hence
\begin{equation*}
\frac{V(x +  h x, i) - V(x,i) }{h x} \leq  \kappa,
\end{equation*}
which, letting $h \to 0$, suggests that
\begin{equation}
\label{eq-DPP2}
 V'(x,i) \leq  \kappa, \qquad (x,i) \in \rr_+ \times \{1,2\}.
\end{equation}
Since one of the two possibilities (intervening or not intervening) is optimal, one of the two inequalities \eqref{eq-DPP1} and \eqref{eq-DPP2} is indeed an equality. Overall, we get the following candidate equation for $V$:
\begin{equation}
\label{eq-VI-SC}
\max \Big\{ (\mathcal{L}-\rho)w(x,i) + x\eta_i - cx^2, \,\,  w'(x,i) -  \kappa  \Big\} =0, \qquad (x,i) \in \rr_+ \times \{1,2\}.
\end{equation}
This is a system of ordinary differential equations (ODEs), due to the transition amongst the regimes, with gradient constraints. 

Because of the structure of the problem, we expect that the representative company does not intervene until its production falls below a certain critical level, depending on the state of the economy. In other words, we expect the company's no-action region to be in the form
\begin{equation}
\label{eq-continuation-SC}
\mathcal{NA}:=\big\{(x,i) \in \rr^2_+ \times \{1,2\}:\, w'(x,i)< \kappa \big\}=\big\{(x,i)\in \rr_+ \times \{1,2\}:\, x>a_i\big\},
\end{equation}
for some constants $a_i$, $i \in \{1,2\}$ to be determined. Notice that, under the additional assumption \eqref{eq-continuation-SC}, equation \eqref{eq-VI-SC} is in fact equivalent to the free-boundary problem
\begin{equation}
\label{eq-fbp-SC}
\begin{cases}
(\mathcal{L}-\rho)w(x,i) + \eta_ix - cx^2 \leq 0, & x < a_i, \\
(\mathcal{L}-\rho)w(x,i) + \eta_ix - cx^2 =0, & x \geq a_i, \\
w'(x,i)-\kappa=0, & x \leq a_i, \\
w'(x,i)-\kappa< 0, & x > a_i.
\end{cases}
\end{equation}
Based on  \eqref{eq-fbp-SC}, the following verification theorem for problem \eqref{eq-pb-SC} holds. 

\begin{prop}
\label{prop-verif-SC}
Let $w: \rr_+\times \{1,2\} \to \rr$ such that:
\begin{itemize}
    \item[(i)] for $i \in \{1,2\}$, $w(\cdot, i) \in C^2(\rr_+)$ and there exists $K>0$ such that $|w(x,i)|\leq K(1+|x|^2)$ for any $x \in \rr_+$; 
    \item[(ii)] there exists $(a_1, a_2) \in \rr^2_+$ such that \eqref{eq-continuation-SC} holds; 
    \item[(iii)] $w$ is a solution to the free-boundary problem \eqref{eq-fbp-SC}.
\end{itemize}
Then, the value function $V$ of the singular control problem \eqref{eq-pb-SC} identifies with $w$, $V \equiv w$, and the optimal control is given by
\begin{equation}
\label{eq-optctrl-SC}
I^{\star}_t = 0 \,\, \vee \,\,   \sup_{0 \leq s \leq t} \bigg( \ln(a_{\eps_s}/x) + \int_0^s (\delta +\frac12 \sigma_{\eps_u}^2) du - \int_0^s \sigma_{\eps_u} dB_u \bigg), \qquad t \geq 0, \qquad I^{\star}_{0-}=0.
\end{equation}
\end{prop}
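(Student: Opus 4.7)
The plan is to prove Proposition \ref{prop-verif-SC} by the standard Itô-calculus argument for verification theorems, adapted to handle (a) the ``$\circ$'' notation for the singular control, which encodes the proportional jump convention in \eqref{eq-XcircI}; (b) the regime-switching component contributing the term $\sum_{j} P_{\eps_t j} w(\cdot, j)$ to the generator $\mathcal{L}$; and (c) the quadratic growth of $w$.

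First I would fix an arbitrary $I \in \mathcal{A}$, decompose $I=I^c + I^j$, and apply a generalized Itô formula to $t \mapsto e^{-\rho t} w(X^I_t, \eps_t)$ on $[0,T]$. This produces four kinds of contributions: (i) the Lebesgue integral $\int_0^T e^{-\rho t}[(\mathcal{L}-\rho)w(X^I_t, \eps_t)]\,dt$, where the summation part in $\mathcal{L}$ arises after compensating the pure-jump martingale of $\eps$; (ii) a local martingale $M_T$ from $dB_t$ and the compensated Markov-chain jumps; (iii) the continuous-control term $\int_0^T e^{-\rho t} w'(X^I_t, \eps_t) X^I_t \,dI^c_t$; and (iv) the $I$-jump term, which by the definition \eqref{eq-XcircI} and the fundamental theorem of calculus equals $\sum_{t\leq T, \Delta I_t>0} e^{-\rho t}\int_0^{\Delta I_t} X^I_{t-}e^u w'(X^I_{t-}e^u, \eps_{t-})\,du$. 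Next, using the free-boundary system \eqref{eq-fbp-SC}, the inequalities $(\mathcal{L}-\rho)w + x\eta_i - cx^2 \leq 0$ and $w'(\cdot,i) \leq \kappa$ can be applied pointwise to (i) and to (iii)--(iv), respectively; combining (iii)--(iv) recovers exactly $\kappa \int_0^T e^{-\rho t} X^I_t \circ dI_t$. After rearrangement one obtains
\begin{equation*}
\int_0^T e^{-\rho t}\Big[X^I_t \eta_{\eps_t} - c(X^I_t)^2\Big] dt - \kappa \int_{[0,T]} e^{-\rho t} X^I_t \circ dI_t \leq w(x,i) - e^{-\rho T} w(X^I_T, \eps_T) + M_T.
\end{equation*}

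Then I would take expectations after a standard localization of $M_T$. The quadratic growth of $w$ together with the admissibility conditions in \eqref{set:A}, namely $\lim_{T\to\infty}\eee_{(x,i)}[e^{-\rho T}(X^I_T)^2]=0$ and the finiteness of the running cost integrals, allows dominated/monotone convergence to send $T \to \infty$, yielding $J_{(x,i)}(I) \leq w(x,i)$; taking the supremum over $\mathcal{A}$ gives $V(x,i) \leq w(x,i)$.

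For the reverse inequality, I would plug in the candidate $I^{\star}$ defined in \eqref{eq-optctrl-SC}. By construction, $I^{\star}$ is the Skorokhod reflector of $\ln X^I/a_{\eps}$ at zero, so $X^{I^{\star}}_t \geq a^{\star}_{\eps_t}$ for all $t\geq 0$, increases only on $\{X^{I^{\star}}_t = a^{\star}_{\eps_t}\}$ via $dI^{\star,c}$, and has jumps only at regime-switching times of $\eps$ (or at $t=0$) sized precisely so that $X^{I^{\star}}$ returns to the new $a^{\star}_{\eps_t}$. On the support of $dI^{\star,c}$ one has $w' = \kappa$ by the smooth-fit part of \eqref{eq-fbp-SC}, and along each $I^{\star}$-jump the trajectory $u \mapsto X^{I^{\star}}_{t-}e^u$ remains inside $\{w'=\kappa\}$, so contributions (iii) and (iv) become equalities; likewise $X^{I^{\star}}_t \geq a^{\star}_{\eps_t}$ places the process inside the region where $(\mathcal{L}-\rho)w + x\eta_i - cx^2=0$, turning (i) into an equality.

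The main obstacle I expect is the admissibility of $I^{\star}$, i.e.\ verifying $I^{\star}\in\mathcal{A}$: one must show the quadratic integrability $\eee_{(x,i)}[\int_0^\infty e^{-\rho t}(X^{I^{\star}}_t)^2 dt]<\infty$, the transversality condition $\eee_{(x,i)}[e^{-\rho T}(X^{I^{\star}}_T)^2]\to 0$, and a bound on $\eee_{(x,i)}[\int_0^\infty e^{-\rho t} X^{I^{\star}}_t\circ dI^{\star}_t]$. Writing $Y_t := \ln(X^{I^{\star}}_t/a^{\star}_{\eps_t})\geq 0$ and applying Itô, one bounds $(X^{I^{\star}}_t)^2$ by the square of a reflected drifted Brownian motion with volatility $2\max\{\sigma_1,\sigma_2\}$ and drift $-2\delta+2\max\{\sigma_1^2,\sigma_2^2\}$, which under Assumption \ref{ass:assrho} ($\rho > 2\max\{\sigma_1^2,\sigma_2^2\}$) yields the required exponential damping. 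The bound on the control cost then follows by the same Itô identity used in the verification, applied to $w(x,i)=x^2$, together with the two preceding estimates. Once admissibility is established, the equality chain in the previous paragraph combined with taking $T\to\infty$ produces $J_{(x,i)}(I^{\star}) = w(x,i) \geq V(x,i)$, closing the proof.
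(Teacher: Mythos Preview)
Your verification argument---apply It\^o--Meyer to $e^{-\rho t}w(X^I_t,\eps_t)$, use the inequalities in \eqref{eq-fbp-SC} to bound the Lebesgue, continuous-control, and jump-control contributions, localize the martingale, pass $T\to\infty$ via the admissibility conditions, and then repeat with $I^\star$ to turn all inequalities into equalities---is exactly what the paper does. The handling of the $\circ$-integral via $\int_0^{\Delta I_t}X^I_{t-}e^u w'(X^I_{t-}e^u,\eps_{t-})\,du$ and the compensation of the Markov-chain jumps into the $\sum_jP_{ij}w(\cdot,j)$ term is correct and matches the paper line by line.

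The only genuine difference is in how admissibility of $I^\star$ is established. The paper does not compare to a one-barrier reflected Brownian motion. For the transversality bound it performs a Girsanov change to a measure $\widehat{\mathbb P}$ under which the exponential $e^{2\int_0^T\sigma_{\eps_u}dB_u-2\int_0^T\sigma_{\eps_u}^2du}$ is absorbed, then uses the explicit sup-formula \eqref{eq-optctrl-SC} to bound $I^\star_T$ pathwise, and finally applies Burkholder--Davis--Gundy to the resulting exponential martingale; Assumption \ref{ass:assrho} then kills $e^{-\rho T}\eee_{(x,i)}[(X^{I^\star}_T)^2]$. For the cost integrals the paper does not apply It\^o to $x^2$ but instead constructs an auxiliary $g$ solving $(\mathcal L-\rho)g+x^2=0$ with $g_x(a^\star_i,i)=-1$ (extended so that $g_x\equiv-1$ below $a^\star_i$), applies It\^o--Meyer to $e^{-\rho t}g(X^{I^\star}_t,\eps_t)$, and reads off $\eee_{(x,i)}[\int_0^\infty e^{-\rho t}((X^{I^\star}_t)^2dt+X^{I^\star}_t\circ dI^\star_t)]=g(x,i)<\infty$ directly. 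Your route can be made to work, but note two points of care: the process $Y_t=\ln(X^{I^\star}_t/a^\star_{\eps_t})$ has additional jumps from the regime-switches of $a^\star_{\eps}$, so it is not literally a reflected drifted Brownian motion and the comparison needs to absorb those jumps; and applying It\^o to $x^2$ produces control terms of the form $\int e^{-\rho t}(X^{I^\star}_t)^2\,dI^{\star,c}_t$ and $\sum e^{-\rho t}(X^{I^\star}_{t-})^2(e^{2\Delta I^\star_t}-1)$, not $\int e^{-\rho t}X^{I^\star}_t\circ dI^\star_t$, so you would still need the observation that $X^{I^\star}=a_{\eps}$ on the support of $dI^{\star,c}$ (and boundedness at jump times) to convert one into the other. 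The paper's auxiliary-function trick sidesteps both issues cleanly.
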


\begin{proof}
Postponed to Appendix \ref{sec-appendix}. 
\end{proof}

Notice that the optimal control $I^{\star}$ in \eqref{eq-optctrl-SC} is such that $(X^{I^{\star}}, I^{\star}, \eps)$ solves a Skorokhod reflection problem at the regime-dependent boundary $a_{\eps}$ (cf.\ \cite{BudLiu}). In particular, $\mathbb{P}$-a.s.\ for all $t \geq 0$:
\begin{equation}
\label{eq-Skrefl}
X^{I^{\star}}_t \geq a_{\eps_t}, \qquad I^{\star}_t = \int_{[0,t]} \mathds{1}_{\{X^{I^{\star}}_{s-} \leq a_{\eps_{s}}\}} dI^{\star}_s, \qquad \int_0^{\Delta I^{\star}_t} \mathds{1}_{(X^{I^{\star}}_{t-} + z, \eps_{t}) \in \mathcal{NA}\}} dz =0.
\end{equation}
That is, $I^{\star}$ keeps $(X^{I^{\star}}, \eps)$ for all times in the closure of the no-action region $\{(x,i)\in \rr_+ \times \{1,2\}:\, V'(x,i) < \kappa \}$, and it acts only as much as it is necessary to prevent that the state-process leaves such a portion of the state-space.  

Solving problem \eqref{eq-fbp-SC} by a guess-and-verify approach (i.e., considering suitable parametric candidate solutions to \eqref{eq-VI-SC} and selecting the ``optimal'' parameters by imposing appropriate regularity conditions, the so-called smooth-fit and smooth-pasting conditions) is possible but challenging in the present context. As a matter of fact, the underlying Markov chain makes the system of constrained ODEs in \eqref{eq-VI-SC} interconnected, with the result that it becomes very difficult to show existence and uniqueness of the solution to the highly nonlinear (and unhandy) smooth-fit and smooth-pasting equations. We therefore adopt a different approach here, already employed in \cite{FeRo}: We introduce an optimal stopping problem with regime switching; We prove existence of thresholds triggering its optimal stopping rule and regularity of its value function; finally, by means of Proposition \ref{prop-verif-SC}, we verify that a suitable integral of the stopping problem's value function identifies with the value function of the singular control problem \eqref{eq-pb-SC}.

\subsubsection{A related optimal stopping problem and its relation to the singular control problem}
\label{ref:OS-problem}

Let us start by defining
\begin{equation}
\label{eq-pb-OS}
v(x,i) := \inf_{\tau \geq 0} \, \bar{\eee}_{(x,i)} \bigg[ \int_0^\tau e^{-(\rho + \delta)t}(\eta_{\eps_t} - 2c \bar X_t) dt + \kappa e^{-(\rho+\delta)\tau} \bigg],\quad (x,i) \in \rr_+ \times \{1,2\},
\end{equation}
where $\tau$ has to be chosen in the class of $\mathbbm{F}$-stopping times and $\bar X$ satisfies
\begin{equation}
\label{eq-Xbar}
d\bar X_t = - (\delta - \sigma^2_{\eps_t}) \bar X_t dt + \sigma_{\eps_t} \bar X_t dB_t, \qquad \bar X_0=x >0.
\end{equation}
In \eqref{eq-pb-OS} above, $\bar{\eee}_{(x,i)}$ denotes the expectation under $\bar{\mathbb{P}}_{(x,i)}[\,\cdot\,]:=\mathbb{P}[\cdot\,|\,\bar X_0=x, \eps_0=i]$. Furthermore, for future frequent use, we note that the following second-order differential operator is the infinitesimal generator associated to $(\bar X, \eps)$:
\begin{equation}
\label{eq-mathcalLbar}
\bar{\mathcal{L}}w(x,i) = \frac12 \sigma_i^2 x^2 w''(x,i) - (\delta-\sigma_i^2) x w'(x,i) + \sum_{j=1}^2 P_{ij} w(x,j), \qquad w(\cdot,i) \in C^2(\rr_+), \, i \in \{1,2\},
\end{equation}
with $P$ as in \eqref{eq-Pi}.  

As it is customary in optimal stopping theory (see \cite{PeskShir}), we introduce the continuation and stopping regions of Problem \eqref{eq-pb-OS}
\begin{equation}
\label{eq-continuation-OS}
\mathcal{C}:=\{(x,i)\in \rr_+ \times \{1,2\}:\, v(x,i)<\kappa \}, \quad \mathcal{S}:=\{(x,i)\in \rr_+ \times \{1,2\}:\, v(x,i) \geq \kappa \},
\end{equation}
as well as their $i$-sections: 
$$\mathcal{C}_i:=\{x\in \rr_+:\, (x,i) \in \mathcal{C}\}, \quad \mathcal{S}_i:=\{x\in \rr_+:\, (x,i) \in \mathcal{S}\}.$$

Then, the next result holds true.
\begin{prop}
\label{prop-verif-OS}
The value function $v$ of the optimal stopping problem \eqref{eq-pb-OS} satisfies the following properties: 
\begin{itemize}
    \item[(i)] there exists $(a_1, a_2) \in \rr^2_+$ such that 
    $$\mathcal{C}_i=\{x\in \rr_+:\, x < a_i\}, \quad \mathcal{S}_i=\{x\in \rr_+:\, x \geq a_i\}.$$
    \item[(ii)] 
    $v(\cdot, i) \in C^1(\rr_+)$, $v(\cdot, i) \in C^2(\mathcal{C}_i)$, and $v''(\cdot, i) \in L^\infty_{loc}(\rr_+)$, for $i \in \{1,2\}$, and satisfies
\begin{equation}
\label{eq-fbp-OS}
\begin{cases}
(\bar{\mathcal{L}}-(\rho+\delta))v(x,i) + \eta_i - 2cx \geq 0, & x < a_i, \\
(\bar{\mathcal{L}}-(\rho+\delta))v(x,i) + \eta_i - 2cx =0, & x > a_i, \\
\kappa- v(x,i) \geq 0, & x < a_i, \\
\kappa- v(x,i) =0, & x \geq a_i. \\
\end{cases}
\end{equation}
   \item[(iii)] $\tau^{\star}(x,i):= \inf \{ t \geq 0 : \bar X_t \leq a_{\eps_t}\},\,\, \bar{\mathbb{P}}_{(x,i)}$-a.s., is optimal for \eqref{eq-pb-OS}.
\end{itemize}
\end{prop}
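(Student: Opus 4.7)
My plan is to follow the probabilistic route for regime-switching optimal stopping of \cite{FeRo}, proceeding in three passes: monotonicity and basic properties of $v$; the threshold structure together with the regularity and the free-boundary system; and verification of the optimality of $\tau^\star$.

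First, I would exploit the linearity of \eqref{eq-Xbar} in the initial condition. Pathwise $\bar X_t^{(x)} = x\,\bar X_t^{(1)}$, so for every fixed $\mathbb F$-stopping time $\tau$ the cost functional decomposes as
\begin{equation*}
J_{(x,i)}(\tau) := \bar{\eee}_{(x,i)}\bigg[\int_0^\tau e^{-(\rho+\delta)t}(\eta_{\eps_t}-2c\bar X_t)\,dt + \kappa e^{-(\rho+\delta)\tau}\bigg] = A_i(\tau) - 2cx\,B_i(\tau) + \kappa C_i(\tau),
\end{equation*}
with $A_i,B_i,C_i$ regime-dependent expectations independent of $x$ and $B_i,C_i\geq 0$. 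Taking the infimum over $\tau$ yields $v(\cdot,i)$ concave and monotone, hence continuous on $\rr_+$, and the choice $\tau=0$ gives $v\leq \kappa$ globally. Assumption \ref{ass:assrho} together with the linearity provides the moment control needed for admissibility of every $\tau$, including $\tau=+\infty$, so that $v$ is finite.

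From monotonicity, the threshold statement (i) is immediate: $\mathcal S_i = \{v(\cdot,i) = \kappa\}$ is a connected half-line separated from $\mathcal C_i$ by a free boundary $a_i\in[0,\infty]$, the orientation being consistent with the definition of $\tau^\star$ in (iii). Non-triviality $a_i\in(0,\infty)$ I would establish by comparing $\tau=0$ with an infinitesimal delay $\tau=\epsilon$: the first-order variation of $J_{(x,i)}$ has sign $\eta_i - 2cx - \kappa(\rho+\delta)$, positive near the origin under the standing hypothesis $\vphi_i > \rho + \delta$ (up to a natural scaling of $\kappa$) and strictly negative for large $x$. The interior regularity and the free-boundary system \eqref{eq-fbp-OS} would then be obtained from the dynamic programming principle, which characterizes $v$ as the viscosity solution of the coupled variational inequality
\begin{equation*}
\min\big\{\,\kappa - v(x,i),\;-(\bar{\mathcal L}-(\rho+\delta))v(x,i) - (\eta_i - 2cx)\,\big\} = 0, \qquad (x,i)\in\rr_+\times\{1,2\};
\end{equation*}
inside $\mathcal C_i$ this reduces to the linear ODE of \eqref{eq-fbp-OS}, and since the coupling $\sum_j P_{ij}v(x,j)$ is continuous and locally bounded, classical linear-ODE regularity yields $v(\cdot,i)\in C^2(\mathcal C_i)$ with $v''(\cdot,i)\in L^\infty_{\mathrm{loc}}(\rr_+)$.

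The $C^1$-pasting (smooth fit) at $a_i$ and the optimality (iii) are then established in tandem: applying the Ito-Dynkin formula to $(e^{-(\rho+\delta)t}v(\bar X_t,\eps_t))_{t\geq 0}$, the supermartingale property implied by \eqref{eq-fbp-OS} gives $v(x,i)\leq J_{(x,i)}(\tau)$ for every admissible $\tau$, and since \eqref{eq-fbp-OS} holds with equality on $[0,\tau^\star)$ together with $v=\kappa$ on $\partial\mathcal C_i$, equality is achieved at $\tau=\tau^\star$; this simultaneously forces $v'$ to match across the boundary, in the spirit of the smooth-fit proof in \cite{PeskShir}. The main obstacle I foresee is precisely this smooth-fit step in the regime-switching setting: the coupling $\sum_j P_{ij}v$ rules out a clean guess-and-verify in a single regime, so both the $C^1$-pasting and the precise location of $a_i$ must be argued probabilistically rather than by direct computation of explicit general solutions; this is also what motivates the authors' detour through $v$ instead of tackling the singular control problem \eqref{eq-pb-SC} directly.
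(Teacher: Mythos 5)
Your route is essentially the paper's: the paper also works probabilistically in the spirit of \cite{FeRo} (to which it defers the whole of part (ii), citing Proposition 4.1 and Theorem 4.3 therein), proves (i) from the monotonicity of $x\mapsto v(x,i)$ (which follows from $\bar X^{(x)}_t=x\bar X^{(1)}_t$, exactly as you argue) combined with the integration-by-parts rewriting $v=\kappa+\inf_\tau\bar{\eee}_{(x,i)}[\int_0^\tau e^{-(\rho+\delta)s}(\eta_{\eps_s}-2c\bar X_s-\kappa(\rho+\delta))\,ds]$ that you also use to locate the boundary, and obtains (iii) from the continuity of $v$ via standard optimal stopping theory. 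One reassurance on orientation: your conclusion that one stops for small $x$ and continues for large $x$ is the correct one --- it matches $\tau^\star$ in (iii), the constructed solution in Proposition \ref{prop-explicit-v}, and the paper's own argument; the displayed inequalities in (i) and in the last two lines of \eqref{eq-fbp-OS} are misprinted.

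Two points need repair. First, a sign slip: for this minimization problem the dynamic programming equation is $\min\{\kappa-v,\ (\bar{\mathcal L}-(\rho+\delta))v+\eta_i-2cx\}=0$, i.e.\ the differential inequality in the stopping region is $(\bar{\mathcal L}-(\rho+\delta))v+\eta_i-2cx\ge 0$ (consistent with the first line of \eqref{eq-fbp-OS}), and the process $e^{-(\rho+\delta)t}v(\bar X_t,\eps_t)+\int_0^t e^{-(\rho+\delta)s}(\eta_{\eps_s}-2c\bar X_s)\,ds$ is a \emph{sub}martingale; your version of the variational inequality has the operator term with the opposite sign, and as written would give $v\ge J_{(x,i)}(\tau)$ rather than the needed $v\le J_{(x,i)}(\tau)$. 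Second, and more substantively, your treatment of smooth fit is circular: the It\^o--Meyer/verification step \emph{requires} $v(\cdot,i)\in C^1$ with $v''(\cdot,i)\in L^\infty_{loc}$ as an input, so it cannot ``simultaneously force'' the $C^1$-pasting at $a_i$. The continuity of $v'$ across the boundary has to be established beforehand and independently --- e.g.\ by the standard perturbation argument comparing $v(a_i\pm\eps,i)$ with $v(a_i,i)$ using the (sub)optimal stopping time for the perturbed initial point together with the regularity of the diffusion at the boundary, which is precisely what the cited results of \cite{FeRo} carry out in the regime-switching setting. You correctly identify this as the main obstacle, but the proposal does not actually supply the argument; as it stands, part (ii) rests on an appeal to a step that your own verification scheme presupposes.
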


\begin{proof}
Postponed to Appendix \ref{sec-appendix}.
\end{proof}

The next proposition finally links the optimal stopping problem \eqref{eq-pb-OS} to the singular control problem \eqref{eq-pb-SC}. 
\begin{prop}
\label{prop-conclus-step1}
Let $v(x,i)$ and $a_i$ be as in Proposition \ref{prop-verif-OS}, and $k_1$ and $k_2$ be the unique solutions to the linear system 
\begin{equation}
\label{eq-k}
\begin{cases}
-(\rho+p_1)k_1 + p_1k_2 + \Big(\eta_1 -   \delta a_1 \kappa\Big) a_1 - ca_1^2 - p_1 \displaystyle \int_{a_1}^{a_2} v(y,2) dy=0, \\[0.4cm]
p_2k_1 -(\rho+p_2)k_2 + \Big(\eta_2 -  \delta a_2 \kappa \Big) a_2 - ca_2^2 + p_2 \displaystyle \int_{a_1}^{a_2} v(y,1) dy=0.
\end{cases}
\end{equation}
Then, the value function $V$ of the singular control problem \eqref{eq-pb-SC} satisfies
\begin{equation}
\label{eq-link-Vv}
V(x,i) = k_i + \int_{a_i}^x v(y,i) dy, \qquad\quad (x,i) \in \rr_+ \times \{1,2\}.
\end{equation}
Furthermore, the optimal control $I^{\star}$ is given by \eqref{eq-optctrl-SC}.
\end{prop}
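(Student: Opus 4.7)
My plan is to construct the candidate value function
$$w(x,i) := k_i + \int_{a_i}^x v(y,i)\,dy, \qquad (x,i) \in \rr_+ \times \{1,2\},$$
with $v$ and $a_i$ as in Proposition \ref{prop-verif-OS} and $(k_1, k_2)$ the unique solution of the linear system \eqref{eq-k}, and then apply the verification result of Proposition \ref{prop-verif-SC}. Uniqueness of $(k_1, k_2)$ is immediate, since the determinant of \eqref{eq-k} equals $(\rho+p_1)(\rho+p_2) - p_1 p_2 = \rho(\rho + p_1 + p_2) > 0$. Once $V \equiv w$ is established, both the representation \eqref{eq-link-Vv} and the optimality of $I^\star$ in \eqref{eq-optctrl-SC} follow directly from Proposition \ref{prop-verif-SC}.

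The first, routine, checks are hypotheses (i)--(ii) of Proposition \ref{prop-verif-SC}. For regularity, $v(\cdot,i)\in C^1(\rr_+)$ (Proposition \ref{prop-verif-OS}(ii)) yields $w(\cdot,i) \in C^2(\rr_+)$ with $w'(\cdot,i) = v(\cdot,i)$, $w''(\cdot,i) = v'(\cdot,i)$. For quadratic growth, the upper bound $v \le \kappa$ and a linear lower bound obtained by testing $\tau = \infty$ in \eqref{eq-pb-OS} (combined with the explicit exponential solution of \eqref{eq-Xbar}) give $|v(x,i)| \le C(1+x)$, hence $|w(x,i)| \le K(1+x^2)$. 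For the structure of the no-action region, since $v(x,i) = \kappa$ on the $i$-th stopping region and $v(x,i) < \kappa$ strictly on its complement, the set $\{w'(x,i) < \kappa\}$ coincides with $\{x>a_i\}$, so hypothesis (ii) holds with the very same thresholds $a_i$.

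The heart of the argument is the free-boundary problem \eqref{eq-fbp-SC}, which I would establish through the identity
$$\frac{d}{dx}\Big[(\mathcal{L}-\rho)\,w(x,i) + \eta_i x - c x^2\Big] \;=\; \big(\bar{\mathcal{L}} - (\rho+\delta)\big)\,v(x,i) + \eta_i - 2 c x,$$
obtained by differentiating term-by-term and using $w^{(k+1)}(\cdot,j) = v^{(k)}(\cdot,j)$: the derivative of $-\delta x\,w'(x,i)$ contributes the drift shift $-\delta x \mapsto -(\delta - \sigma_i^2)x$ that distinguishes $\bar{\mathcal{L}}$ from $\mathcal{L}$, and the discount shift $\rho \mapsto \rho+\delta$ arises from combining $-\rho w'$ with the $-\delta v$ piece produced above. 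By Proposition \ref{prop-verif-OS}(ii) the right-hand side vanishes on the continuation region and is non-negative on the stopping region, so the bracketed quantity on the left is constant on $\{x > a_i\}$ and monotone on $\{x < a_i\}$. Evaluating it at $x = a_i$, using $v(a_i,i) = \kappa$, $v'(a_i,i) = 0$ ($C^1$-pasting from Proposition \ref{prop-verif-OS}(ii)), $w(a_i,i) = k_i$, $w(a_i,j) = k_j + \int_{a_j}^{a_i} v(y,j)\,dy$ for $j \neq i$, and $\sum_j P_{ij} = 0$, the requirement "$(\mathcal{L}-\rho)w(a_i,i) + \eta_i a_i - c a_i^2 = 0$" reduces after direct rearrangement to exactly the $i$-th row of \eqref{eq-k}. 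Thus \eqref{eq-k} is precisely the boundary-matching condition that forces the bracketed quantity to vanish at $x = a_i$; constancy then propagates the equality to $[a_i,\infty)$, and monotonicity together with the zero boundary value forces the inequality $\le 0$ on $(0,a_i)$. Combined with the gradient constraint $w' = v \le \kappa$, all hypotheses of Proposition \ref{prop-verif-SC} are verified.

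The main obstacle is the derivative identity above: the $x$- and $x^2$-coefficients produced by differentiating $-\delta x\,w'$ and $\tfrac12 \sigma_i^2 x^2 w''$ must reassemble cleanly into $\bar{\mathcal{L}}$ applied to $v$, and the cross-regime sum $\sum_j P_{ij} w(x,j)$ must yield $\sum_j P_{ij} v(x,j)$ under differentiation; this is the KS-type mechanism that relates the singular control problem to the stopping problem, now carried out in the regime-switching setting. A secondary care point is the bookkeeping at $x = a_i$: the cross-regime values $w(a_i,j)$ carry the integrals $\int_{a_j}^{a_i} v(y,j)\,dy$ appearing in \eqref{eq-k}, so the alignment is delicate but purely mechanical once the derivative identity is in hand.
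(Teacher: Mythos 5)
Your proposal is correct and follows the paper's own strategy: build $w(x,i)=k_i+\int_{a_i}^x v(y,i)\,dy$, show it solves the free-boundary problem \eqref{eq-fbp-SC}, and conclude via the verification result of Proposition \ref{prop-verif-SC}. The only (cosmetic) difference is directional: you differentiate the HJB residual of $w$ to land on the stopping problem's equation $(\bar{\mathcal{L}}-(\rho+\delta))v+\eta_i-2cx$, whereas the paper integrates the equation for $v$ over $(a_i,x)$ by parts — two views of the same computation, with \eqref{eq-k} playing the identical role of the boundary-matching condition at $x=a_i$ in both.
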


\begin{proof}
Postponed to Appendix \ref{sec-appendix}.
\end{proof}


\subsubsection{Semi-closed expressions for $v$ and equations for the free boundaries}
\label{sssec-step1b}

Due to Proposition \ref{prop-conclus-step1}, $v$ and $a_i$, $i=1,2$, solve Problem \eqref{eq-fbp-OS}. We then here focus on solving \eqref{eq-fbp-OS} and, without loss of generality, we assume
\begin{equation}
\label{eq-order-a}
a_1 \leq a_2.
\end{equation}

The other case can be treated via completely analogous arguments and it is therefore omitted in the interest of brevity. We stress once more that solving \eqref{eq-fbp-OS} and imposing the regularity prescribed in Proposition \ref{prop-verif-OS}-(ii) we do obtain the value function of the optimal stopping problem (and not candidate values), as well as equations that are necessarily satisfied by the free boundaries $a_1$ and $a_2$. 

We solve Problem \eqref{eq-fbp-OS} by considering separately the three intervals $(0,a_1)$, $(a_1,a_2)$, $(a_2,\infty)$. The case $x \in (0,a_1)$ is trivial: By \eqref{eq-fbp-OS} we have 
\begin{equation*}
v(x,1) = \kappa= v(x,2).    
\end{equation*}
When $x \in (a_1,a_2)$, functions $v(x,1)$ and $v(x,2)$ satisfy
\begin{equation*}
\begin{cases}
(\bar{\mathcal{L}}-(\rho+\delta))v(x,1) + \eta_1 - 2cx = 0, \\
v(x,2) = \kappa,
\end{cases}    
\end{equation*}
that is, by the definition of $\mathcal{L}$ in \eqref{eq-mathcalL},
\begin{equation*}
\begin{cases}
\frac12 \sigma_1^2 x^2 v''(x,1) - (\delta-\sigma_1^2) x v'(x,1) - (\rho + \delta + p_1) v(x,1) = - \eta_1 + 2cx   - p_1\kappa, \\
v(x,2) = \kappa.
\end{cases}    
\end{equation*}
The general solution to the first equation is 
\begin{equation*}
v(x,1) = A \Big(\frac{x}{a_1}\Big)^{\gamma_1^+} + B \Big(\frac{x}{a_1}\Big)^{\gamma_1^-} + \hat{v}(x,1),  
\end{equation*}
where $A,B \in \rr$ are free parameters (to be specified later) and where $\gamma_1^+,\gamma_1^-$ are the solutions to
\begin{equation*}
\frac12 \sigma_1^2\gamma(\gamma - 1) - (\delta-\sigma_1^2) \gamma - (\rho+\delta+p_1) = 0;
\end{equation*}
that is,
\begin{equation*}
\gamma_{1}^{\pm} = \frac{(\delta-\frac{\sigma_1^2}{2}) \pm \sqrt{(\delta-\frac{\sigma_1^2}{2})^2 + 2 \sigma_1^2(\rho+\delta+p_1)}}{\sigma_1^2} \qquad\qquad(\gamma_1^- <0<\gamma_1^+).  
\end{equation*}
As for $\hat{v}(x,1)$, by looking for an affine particular solution, we find
\begin{equation*}
\hat{v}(x,1) = C_1 x + D_1,
\end{equation*}
where $C_1,D_1$ are given by (where Assumption \ref{ass:assrho} guarantees that $C_1$ is finite)
\begin{equation*}
C_1= - \frac{2c}{\rho + 2\delta + p_1 - \sigma^2_1}, \qquad\qquad 
 D_1= \frac{\eta_1 + p_1\kappa}{ \rho + \delta + p_1}.
\end{equation*}
Hence, in $(a_1,a_2)$ we have
\begin{equation*}
\begin{cases}
v(x,1) = A \big(\frac{x}{a_1}\big)^{\gamma_1^+} + B \big(\frac{x}{a_1}\big)^{\gamma_1^-} + C_1x + D_1, \\
v(x,2) = \kappa.
\end{cases}    
\end{equation*}
Finally, in $x \in (a_2, \infty)$ functions $v(x,1)$ and $v(x,2)$ satisfy
\begin{equation*}
\begin{cases}
(\bar{\mathcal{L}}-(\rho+\delta))v(x,1) + \eta_1 - 2cx = 0, \\
(\bar{\mathcal{L}}-(\rho+\delta))v(x,2) + \eta_2 - 2cx = 0,
\end{cases}    
\end{equation*}
that is, by the definition of $\bar{\mathcal{L}}$ in \eqref{eq-mathcalLbar},
\begin{equation*}
\begin{cases}
\frac12 \sigma_1^2 x^2 v''(x,1) - (\delta-\sigma_1^2) x v'(x,1) - (\rho + \delta + p_1) v(x,1) + p_1 v(x,2) = - \eta_1 + 2cx, \\
\frac12 \sigma_2^2 x^2 v''(x,2) - (\delta-\sigma_2^2) x v'(x,2) - (\rho + \delta + p_2) v(x,2) + p_2 v(x,1) = - \eta_2 + 2cx.
\end{cases}    
\end{equation*}
The general solution is given by 
\begin{equation*}
\begin{cases}
v(x,1) = \sum_{i=1}^4 M_i \big(\frac{x}{a_2}\big)^{\lambda_i} + \tilde{v}(x,1),\\
v(x,2) = \sum_{i=1}^4 \bar M_i \big(\frac{x}{a_2}\big)^{\lambda_i} + \tilde{v}(x,2),
\end{cases}    
\end{equation*}
where $M_i \in \rr$ are free parameters, 
\begin{equation*}
\bar M_i = - \frac{G_1(\lambda_i)}{p_1}M_i =: - G_{1i} M_i, \qquad\quad i \in \{1,2,3,4\},
\end{equation*}
with $\lambda_i$ being the unique real roots to the equation 
\begin{equation}
\label{eq:Gbar}
\bar G (\lambda) := G_1(\lambda)G_2(\lambda) - p_1p_2 = 0,
\end{equation}
where
\begin{equation*}
G_i(\lambda) = \frac12 \sigma_i^2\lambda(\lambda - 1) - (\delta-\sigma_i^2) \lambda - (\rho+\delta+p_i), \qquad\quad i \in \{1,2\}.
\end{equation*}
To see that \eqref{eq:Gbar} admits four distinct real roots $\lambda_1 < \lambda_2 < 0 < \lambda_3 < \lambda_4$, it suffices to notice that: (i) $\bar G(\gamma_1^\pm) = -p_1p_2 <0$, with $\gamma_1^-<0<\gamma_1^+$; (ii) $\bar G(0)>0$; (iii) $\bar G(+\infty) = \bar G(-\infty) = +\infty$.

As for $\tilde{v}(x,i)$, by looking again for affine solutions, one gets
\begin{equation*}
\tilde{v}(x,i) = L_i x + R_i, \qquad\quad i \in \{1,2\},
\end{equation*}
where $L_i,R_i$ are solutions to the linear systems
\begin{equation*}
\begin{bmatrix}
\rho + 2\delta + p_1 - \sigma^2_1 & -p_1\\
-p_2 & \rho + 2\delta + p_2 - \sigma^2_2
\end{bmatrix}
\begin{bmatrix}
L_1\\
L_2
\end{bmatrix}
=
\begin{bmatrix}
-2c\\
-2c
\end{bmatrix}
, \qquad
\begin{bmatrix}
\rho + \delta + p_1 & -p_1\\
-p_2 & \rho + \delta + p_2 
\end{bmatrix}
\begin{bmatrix}
R_1\\
R_2
\end{bmatrix}
=
\begin{bmatrix}
\eta_1\\
\eta_2
\end{bmatrix}
.
\end{equation*}
Furthermore, to guarantee a linear growth for $v(x,i)$, we take $M_3=M_4=0$. Hence, in $(a_2, \infty)$ we have
\begin{equation*}
\begin{cases}
v(x,1) = M_1 \big(\frac{x}{a_2}\big)^{\lambda_1} + M_2 \big(\frac{x}{a_2}\big)^{\lambda_2} + L_1 x + R_1,\\[0.2cm]
v(x,2) = - M_1 G_{11} \big(\frac{x}{a_2}\big)^{\lambda_1} - M_2 G_{12} \big(\frac{x}{a_2}\big)^{\lambda_2} + L_2 x + R_2.
\end{cases}    
\end{equation*}
To determine the value of the parameters $A,B,M_1,M_2,a_1,a_2$, we impose that $v(\cdot,i) \in C^1(\rr_+)$ for $i \in \{1,2\}$ (cf.\ Proposition \ref{prop-verif-OS}-(ii)):

\begin{equation*}
\begin{cases}
v(a_{1}-,1) = v(a_{1}+,1), \\
v'(a_{1}-,1) = v'(a_{1}+,1), \\
v(a_{2}-,2) = v(a_{2}+,2), \\
v'(a_{2}-,2) = v'(a_{2}+,2), \\
v(a_{2}-,1) = v(a_{2}+,1), \\
v'(a_{2}-,1) = v'(a_{2}+,1),
\end{cases}
\end{equation*}
where, as usual, for a function $f:\rr \times \{1,2\} \to \rr$, $f(x_o \pm, i)$ represents the right/left limit at a given point $x_o$.
The latter system of conditions leads to


\begin{equation}
\label{eq-smoothfit-stopping}
\begin{cases}
\kappa = A + B + C_1 a_1 + D_1, \\[0.15cm]
0 = A \gamma_1^+ + B\gamma_1^- + Ca_1, \\[0.15cm]
\kappa = - M_1 G_{11} - M_2 G_{12} + L_2a_2 + R_2,  \quad j=1,2\\[0.15cm]
0 = - M_1 \lambda_1G_{11} - M_2 \lambda_2 G_{12} + L_2 a_2, \\[0.15cm]
A \big(\frac{a_2}{a_1}\big)^{\gamma_1^+} + B \big(\frac{a_2}{a_1}\big)^{\gamma_1^-} + C_1 a_2 + D_1 = M_1 + M_2 + L_1a_2 + R_1, \\[0.15cm]
 A \gamma_1^+ \big(\frac{a_2}{a_1}\big)^{\gamma_1^+} + B\gamma_1^- \big(\frac{a_2}{a_1}\big)^{\gamma_1^-} + C_1 a_2 = M_1 \lambda_1 + M_2 \lambda_2 + L_1 a_2. \\
\end{cases}
\end{equation}

By considering the pairs \eqref{eq-smoothfit-stopping}(i-ii) and \eqref{eq-smoothfit-stopping}(iii-iv), one can express $A,B$ and $M_1,M_2$ as functions of the unknown $a_1,a_2$:
\begin{align*}
A &=  \bigg(\frac{(\kappa-D_1) \gamma_1^-}{\gamma_1^- -\gamma_1^+} \bigg) + \bigg(\frac{C_1 (1-\gamma_1^-)}{\gamma_1^- -\gamma_1^+}\bigg) a_1  =: c_{11} + c_{12} a_1,
\\
B &=  \bigg(- \frac{(\kappa-D_1) \gamma_1^+}{\gamma_1^- -\gamma_1^+}\bigg) + \bigg(- \frac{C_1 (1-\gamma_1^+)}{\gamma_1^- -\gamma_1^+} \bigg)a_1 =: c_{21} + c_{22} a_1, \\
M_1 &=   \bigg(-  \frac{(\kappa-R_2) \lambda_2}{ G_{11}(\lambda_2 -\lambda_1)}\bigg) + \bigg(-  \frac{L_2 (1-\lambda_2)}{G_{11}(\lambda_2 -\lambda_1)}\bigg)a_2 =: d_{11} + d_{12} a_2,
\\
M_2 &=  \bigg(  \frac{(\kappa-R_2) \lambda_1}{G_{12}(\lambda_2 -\lambda_1)}\bigg) +\bigg(  \frac{L_2 (1-\lambda_1)}{G_{12}(\lambda_2 -\lambda_1)}\bigg)  a_2 =: d_{21} + d_{22} a_2,\\
\end{align*}

Plugging into \eqref{eq-smoothfit-stopping}(v-vi), we get a two-equation system in the unknown $a_1,a_2$:
\begin{align*}
\begin{cases}
& (a_2/a_1)^{\gamma_1^+} (c_{11}+ c_{12} a_1)+ (a_2/a_1)^{\gamma_1^-} (c_{21} + c_{22} a_1) \\
& =(d_{11}+d_{21} + R_1-D) + (d_{12} + d_{22} +L_1-C)a_2 =: e_{11}+e_{12} a_2, \\[0.2cm]
&  (a_2/a_1)^{\gamma_1^+} (c_{11}+ c_{12} a_1) \gamma_1^+ + (a_2/a_1)^{\gamma_1^-} (c_{21} + c_{22} a_1) \gamma_1^- \\
&  =(d_{11}\lambda_1+d_{21}\lambda_2) + (d_{12}\lambda_1 + d_{22}\lambda_2 +L_1-C)a_2 =: e_{21} + e_{22}a_2.
\end{cases}  
\end{align*}

Solving with respect to $(a_2/a_1)^{\gamma_1^+}(c_{11}+ c_{12} a_1)$ and $(a_2/a_1)^{\gamma_1^-} (c_{21}+ c_{22} a_1)$, we get
\begin{equation*}
\begin{cases}
(a_2/a_1)^{\gamma_1^+} (c_{11}+ c_{12} a_1) = \frac{e_{21}- \gamma_1^- e_{11}}{\gamma_1^+ - \gamma_1^-} + \frac{e_{22} -  \gamma_1^- e_{12}}{\gamma_1^+ - \gamma_1^-} a_2 =: f_{11} + f_{12}a_2, \\[0.3cm]
(a_2/a_1)^{\gamma_1^-}(c_{21}+ c_{22} a_1) = \frac{e_{11}\gamma_1^+ - e_{21}}{\gamma_1^+ - \gamma_1^-} + \frac{e_{12} \gamma_1^+ - e_{22}}{\gamma_1^+ - \gamma_1^-} a_2 =: f_{21} + f_{22}a_2,
\end{cases}  
\end{equation*}
and finally
\begin{equation}
\label{eq-coeff-a}
\begin{cases}
a_1^{-\gamma_1^+} (c_{11}+ c_{12} a_1) = a_2^{-\gamma_1^+}(f_{11} + f_{12}a_2), \\
a_1^{-\gamma_1^-}(c_{21}+ c_{22} a_1) = a_2^{-\gamma_1^-}(f_{21} + f_{22}a_2).
\end{cases}  
\end{equation}
We resume the results in the following proposition.

\begin{prop}
\label{prop-explicit-v}
The following results hold.
\begin{itemize}
\item[(i)] The optimal threshold $a=(a_1,a_2) \in \rr^2_+$ of the optimal stopping problem \eqref{eq-pb-OS} is the unique non-negative solution to system \eqref{eq-coeff-a};
\item[(ii)] The value function $v$ of the optimal stopping problem \eqref{eq-pb-OS} is given by 
\begin{equation}
\label{eq-closed-v} 
\begin{aligned}
v(x,1)&=
\begin{cases}
\kappa, & x \leq a_1,\\[0.1cm]
A \big(\frac{x}{a_1}\big)^{\gamma_1^+} + B \big(\frac{x}{a_1}\big)^{\gamma_1^-} + C_1 x + D_1,  & x \in (a_1,a_2), \\[0.1cm]
M_1 \big(\frac{x}{a_2}\big)^{\lambda_1} + M_2 \big(\frac{x}{a_2}\big)^{\lambda_2} + L_1 x + R_1,  & x \geq a_2, 
\end{cases}
\\[0.15cm]
v(x,2)&=
\begin{cases}
\kappa, & x \leq a_2,\\[0.1cm]
- M_1 G_{11} \big(\frac{x}{a_2}\big)^{\lambda_1} - M_2  G_{12} \big(\frac{x}{a_2}\big)^{\lambda_2} + L_2 x + R_2,  & x > a_2, 
\end{cases}
\end{aligned}
\end{equation}
with $a_i$ as in (i) and with $\gamma_i, \lambda_i, A, B, C, D, M_i, L_i, R_i, G_{1i}$ defined above.
\end{itemize}
\end{prop}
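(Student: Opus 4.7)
The strategy is to view Proposition \ref{prop-explicit-v} as the explicit output of the construction already sketched before its statement: Proposition \ref{prop-verif-OS} guarantees that $v$ is $C^1$, piecewise $C^2$, satisfies \eqref{eq-fbp-OS}, and that its continuation region is of threshold-type. The plan is then, first, to derive the formula \eqref{eq-closed-v} (part (ii)) by solving \eqref{eq-fbp-OS} region by region, and second, to read off from the $C^1$-pasting the algebraic system \eqref{eq-coeff-a} and prove uniqueness (part (i)).

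For part (ii), I would fix the ordering $a_1 \leq a_2$ (the reverse case being symmetric), and solve the constrained system \eqref{eq-fbp-OS} on each of the subintervals $(0,a_1)$, $(a_1,a_2)$, $(a_2,\infty)$. On $(0,a_1)$ one has $v(x,i)=\kappa$ for both regimes by definition of the stopping region. On $(a_1,a_2)$, the equation for $v(\cdot,2)$ collapses to $v(x,2)=\kappa$, so the coupled system for $v(\cdot,1)$ degenerates to a single non-homogeneous Euler-type ODE whose homogeneous part has characteristic exponents $\gamma_1^\pm$ and whose particular solution is affine with coefficients $C_1, D_1$. On $(a_2,\infty)$ the system is genuinely coupled; the homogeneous part is governed by the quartic $\bar G$ in \eqref{eq:Gbar}, whose four real roots $\lambda_1<\lambda_2<0<\lambda_3<\lambda_4$ I would verify by the sign argument already outlined (using $\bar G(\gamma_1^\pm)<0$ and $\bar G(0)>0$). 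The linear-growth bound $|v(x,i)|\leq C(1+x)$, which follows from the payoff and the trivial upper bound $v\leq\kappa$ combined with the candidate $\tau=\infty$ giving the asymptotic $v(x,i)\sim L_ix+R_i$, forces $M_3=M_4=0$. The relation $\bar M_i = -G_{1i}M_i$ between the two regime-components is just elimination in the homogeneous system. This produces the piecewise form \eqref{eq-closed-v}.

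For part (i), existence is essentially automatic: by Proposition \ref{prop-verif-OS}(ii), $v$ is $C^1$ across $a_1$ and $a_2$ in both regimes, which, applied to the explicit piecewise representation just derived, gives exactly the six smooth-fit equations \eqref{eq-smoothfit-stopping}. Solving the first four of these linearly for $(A,B,M_1,M_2)$ as affine functions of $(a_1,a_2)$ is routine matrix inversion (the coefficient matrices are $2\times 2$ Vandermonde-like and non-singular since $\gamma_1^+\neq\gamma_1^-$ and $\lambda_1\neq\lambda_2$); substituting into the remaining two equations then yields \eqref{eq-coeff-a}. Uniqueness of the non-negative solution is the conceptual heart of the argument and the main obstacle. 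My plan is to argue by verification: given any other non-negative pair $(\tilde a_1,\tilde a_2)$ solving \eqref{eq-coeff-a}, construct the associated function $\tilde v$ via \eqref{eq-closed-v}; by construction $\tilde v\in C^1$ and satisfies the ODEs of \eqref{eq-fbp-OS} with equality outside $[0,\tilde a_1]\cup [0,\tilde a_2]$. Applying Dynkin's formula (or an Itô-Tanaka argument, justified by $\tilde v''(\cdot,i)\in L^\infty_{\rm loc}$) to $e^{-(\rho+\delta)t}\tilde v(\bar X_t,\eps_t)$ and optimizing over stopping times shows $\tilde v=v$; since the stopping set of $v$ is uniquely determined by $\{v=\kappa\}$, we must have $\tilde a_i=a_i$. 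The delicate point is to verify that the two inequality conditions of \eqref{eq-fbp-OS} (namely $\tilde v\leq\kappa$ in $\mathcal{C}$ and $(\bar{\mathcal{L}}-(\rho+\delta))\tilde v+\eta_i-2cx\geq 0$ in $\mathcal{S}$) are automatic for any positive solution of \eqref{eq-coeff-a}; if they are not, one restricts uniqueness to the physically admissible solutions, in which case the argument still pins down the genuine thresholds from Proposition \ref{prop-verif-OS}.
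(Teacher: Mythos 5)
Your proposal is correct and follows essentially the same route as the paper: part (ii) and the existence half of part (i) are obtained by the region-by-region construction and the $C^1$-pasting carried out just before the statement, and uniqueness is settled by observing that a second non-negative solution of \eqref{eq-coeff-a} would, via verification, produce a second value function for \eqref{eq-pb-OS}, contradicting uniqueness of the value function. The ``delicate point'' you flag --- that the inequality constraints in \eqref{eq-fbp-OS} must be checked before the verification argument applies to an arbitrary non-negative solution of \eqref{eq-coeff-a} --- is genuine, but the paper's own (very terse) proof is equally silent on it, so your treatment is no less complete than the original.
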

\begin{proof}
The structure of $v$ provided in (ii), as well as the fact that the free boundaries solve system \eqref{eq-coeff-a} (cf.\ claim (i) of the proposition), follow by construction. Therefore, we just discuss the uniqueness claim. If there would be two distinct solutions to \eqref{eq-coeff-a}, then one could construct two distinct candidate value functions that, via verification, would identify to two distinct value functions of the optimal stopping problem. However, the value function is unique by definition.
\end{proof}

By Proposition \ref{prop-conclus-step1} (see in particular \eqref{eq-link-Vv}),  the expression of the value $V$ of the singular control problem is then easily found by suitably integrating the value $v$ of the optimal stopping problem. Thus, by \eqref{eq-link-Vv} and recalling $k_1$ and $k_2$ as in Proposition \ref{prop-conclus-step1}, one immediately obtains the following result.

\begin{corollary}
\label{cor:V}
Suppose that $1+\gamma_1^-\neq 0$ and $1+\lambda_i\neq 0$, for $i=1,2$. Then
\begin{align*}
V(x,1)&=
\begin{cases}
k_1 + \kappa (x - a_1), & x \leq a_1,\\
& \\
k_1 + \frac{A a_1}{1+\gamma_1^+} \Big[ \big(\frac{x}{a_1}\big)^{1+\gamma_1^+}  - 1 \Big]+ \frac{B a_1}{1+\gamma_1^-} \Big[\big(\frac{x}{a_1}\big)^{1+\gamma_1^-} - 1 \Big]   + \frac12 C_1 (x^2 - a_1^2 ) + D_1(x-a_1),  & x \in (a_1,a_2), \\ 
& \\
k_1 + \frac{A a_1}{1+\gamma_1^+} \Big[ \big(\frac{a_2}{a_1}\big)^{1+\gamma_1^+}  - 1 \Big]+ \frac{B a_1}{1+\gamma_1^-} \Big[\big(\frac{a_2}{a_1}\big)^{1+\gamma_1^-} - 1 \Big]  
 + \frac12 C_1 (a_2^2 - a_1^2 ) + D_1(a_2-a_1) \\ \\
 + \frac{M_1 a_2}{1+\lambda_1} \Big[ \big(\frac{x}{a_2}\big)^{1+\lambda_1} - 1\Big]
+ \frac{M_2 a_2}{1+\lambda_2} \Big[ \big(\frac{x}{a_2}\big)^{1+\lambda_2} - 1  \Big] \\ \\
+ \frac12  L_1 (x^2 - a_2^2) + R_1(x-a_2),  & x \geq a_2, 
\end{cases}\\
& \\
V(x,2)&=
\begin{cases}
k_2 + \kappa(x-a_2) , & x \leq a_2,\\
& \\
k_2 - \frac{M_1 G_{11} a_2}{1+\lambda_1} \Big[  \big(\frac{x}{a_2}\big)^{1+\lambda_1}  -1 \Big] - \frac{M_2  G_{12} a_2}{1+\lambda_2} \Big[ \big(\frac{x}{a_2}\big)^{1+\lambda_2} -1 \Big] \\ \\ +\frac12  L_2 (x^2 -a_2^2) + R_2(x-a_2),  & x > a_2, 
\end{cases}
\end{align*}

On the other hand, if $1 + \zeta =0$ for $\zeta \in \{\gamma_1^-, \lambda_1, \lambda_2\}$, then the terms of the form $\frac{1}{1+\zeta}[(\cdot)^{1+\zeta} - 1]$ in the formulas above should be replaced by $\ln(\cdot)$.
\end{corollary}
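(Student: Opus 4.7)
The plan is to obtain Corollary \ref{cor:V} by straightforward integration, using Proposition \ref{prop-conclus-step1}, which asserts
\begin{equation*}
V(x,i) = k_i + \int_{a_i}^x v(y,i)\, dy, \qquad (x,i) \in \rr_+ \times \{1,2\},
\end{equation*}
together with the explicit piecewise formula for $v(\,\cdot\,,i)$ provided by Proposition \ref{prop-explicit-v}. Since the integrand is piecewise of the form (constant)~+~(monomial in $y$)~+~(linear in $y$)~+~(constant), each contribution integrates elementarily and the proof is computational.

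First I would handle $V(x,2)$, as this is the simpler case: for $x \leq a_2$, $v(y,2)=\kappa$ on $[a_2,x]$ (or $[x,a_2]$), giving immediately $V(x,2) = k_2 + \kappa(x-a_2)$; for $x > a_2$, I would integrate term by term
\begin{equation*}
\int_{a_2}^x \bigl[-M_1 G_{11}(y/a_2)^{\lambda_1} - M_2 G_{12}(y/a_2)^{\lambda_2} + L_2 y + R_2\bigr]\, dy,
\end{equation*}
using the substitution $u = y/a_2$ for the monomial terms. When $1+\lambda_j \neq 0$, the antiderivative of $(y/a_2)^{\lambda_j}$ is $\frac{a_2}{1+\lambda_j}(y/a_2)^{1+\lambda_j}$; the linear and constant terms are trivial. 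Collecting the boundary contributions at $y=a_2$ yields the stated expression.

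Next I would turn to $V(x,1)$, which requires splitting the integration domain according to where $y$ sits relative to $a_2$. For $x \leq a_1$ the integrand equals $\kappa$ and the formula is immediate. For $x \in (a_1,a_2)$, I would integrate
\begin{equation*}
\int_{a_1}^x\!\!\bigl[A(y/a_1)^{\gamma_1^+} + B(y/a_1)^{\gamma_1^-} + C_1 y + D_1\bigr] dy,
\end{equation*}
using the same substitution trick, which produces the middle formula. For $x \geq a_2$, I would write
\begin{equation*}
\int_{a_1}^x v(y,1)\, dy = \int_{a_1}^{a_2} v(y,1)\, dy + \int_{a_2}^x v(y,1)\, dy
\end{equation*}
and evaluate the first summand via the previous case (at $x=a_2$), and the second by integrating the $(a_2,\infty)$-piece of $v(\,\cdot\,,1)$ analogously, which delivers the third sub-formula.

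The degenerate cases $1+\zeta = 0$ for $\zeta \in \{\gamma_1^-,\lambda_1,\lambda_2\}$ correspond to antiderivatives of $y^{-1}$, so the factor $\frac{1}{1+\zeta}[(\cdot)^{1+\zeta}-1]$ is replaced by $\ln(\cdot)$; this is just the standard removable-singularity convention and should be noted separately. No real obstacle is present here: the entire argument is a bookkeeping exercise matching antiderivatives with the coefficients $A,B,C_1,D_1,M_1,M_2,L_1,L_2,R_1,R_2,G_{11},G_{12}$ already introduced. The only subtlety worth flagging is consistency of the splitting at $y=a_2$, i.e. that the contribution $\int_{a_1}^{a_2} v(y,1)\,dy$ appearing inside $V(x,1)$ for $x\geq a_2$ matches exactly the value $V(a_2,1) - k_1$ obtained from the middle branch, which follows from the continuity of $v(\,\cdot\,,1)$ at $a_2$ guaranteed by Proposition \ref{prop-verif-OS}-(ii).
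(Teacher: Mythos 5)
Your proposal is correct and is exactly the argument the paper intends: the paper itself derives Corollary \ref{cor:V} "immediately" from the representation $V(x,i)=k_i+\int_{a_i}^x v(y,i)\,dy$ of Proposition \ref{prop-conclus-step1} and the piecewise formula for $v$ in Proposition \ref{prop-explicit-v}, which is precisely the term-by-term integration (with the split at $a_2$ and the logarithmic degenerate cases) that you carry out.
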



\subsection{Second step: Determining the stationary distribution}
\label{ssec-step2}

In Section \ref{ssec-step1} we computed, for $Q=(Q_1,Q_2) \in \rr^2_+$ given and fixed, the optimal control $I^{\star}(Q)$. We now determine the stationary distribution $(p_\infty^Q(dx,i))_{i = 1,2}$ of the joint process $(X^{I^{\star}(Q)}_t, \eps_t)_{t \geq 0}$. Once again, we fix $Q=(Q_1,Q_2) \in \rr^2_+$ and omit the dependence on $Q$ to ease the notation burden. 

To simplify the computations, instead of working with $X^{I^{\star}}$, we will work with its natural logarithm
\begin{equation*}
Z_t:=\ln(X^{I^{\star}}_t).  
\end{equation*}
Notice that by \eqref{eq-SDE-sol}, for $x>0$, we have
\begin{equation*}
dZ_t = - \Big(\delta + \frac12 \sigma^2_{\eps_t}\Big)dt + \sigma_{\eps_t} dB_t +  dI^{\star}_t, \qquad t \geq 0, \qquad Z_{0-}=\ln(x),
\end{equation*}
where $I^{\star}$ is such that $Z_t \geq b_{\eps_t}$ and that $I^{\star}_t=\int_0^t \mathbbm{1}_{\{Z_s \leq b_{\eps_s}\}} dI^{\star}_t$, with
\begin{equation}
\label{eq:abar}
b_i := \ln(a_i), \quad i \in \{1,2\}.
\end{equation}
Notice that, under Assumption \eqref{eq-order-a}, we have $b_1 < b_2$. By \cite[Theorem 2]{BudLiu}, the pair $(Z_t, \eps_t)_{t \geq 0}$ is a positively recurrent process and thus admits a unique stationary distribution $\Pi$. By a slight abuse of notation we write  
\begin{equation*}
\Pi(z,i) = \mathbb{P}(Z_\infty \leq z, \eps_\infty=i), \qquad i \in \{1,2\}.
\end{equation*}
Furthermore, by adapting \cite[Theorem 1]{DAuriaKella} to our setting with only one regime-dependent reflecting boundary, and recalling $P_{ij}$ and $\pi_i$ as in \eqref{eq-Pi}, we have the following characterization: For $i \in \{1,2\}$, $\Pi$ is the unique solution with non-decreasing components to 
\begin{equation}
\label{eq-pde-stationary}
\begin{cases}
\Pi(z,i)=0, & z < b_i, \\
\frac12 \sigma_i^2 \Pi''(z,i) + \Big(\delta + \frac12 \sigma^2_i\Big) \Pi'(z,i) + \sum_{j=1}^2 P_{ij} \Pi(z,j) = 0, & z \geq b_i, \\
\Pi(z,i)= \pi_i, & \text{$z \to +\infty$,}
\end{cases}
\end{equation}
such that $\Pi(\cdot,1) \in C^0(\rr) \cap C^1(\rr \setminus \{b_1\}) \cap C^2(\rr \setminus \{b_1, b_2\})$ and $\Pi(\cdot,2) \in C^0(\rr) \cap C^1(\rr \setminus \{b_2\}) \cap C^2(\rr \setminus \{b_2\})$.

In order to determine an explicit expression for $\Pi$, we will first build a parametric class of solutions to \eqref{eq-pde-stationary}, then set the free parameters by imposing the regularity and asymptotic conditions, and finally check that the candidate function is indeed non-decreasing. 

We solve \eqref{eq-pde-stationary} by considering separately the three intervals $(-\infty,b_1)$, $(b_1, b_2)$, $(b_2,+\infty)$. The case $z \in (-\infty,b_1)$ is immediate: 
\begin{equation*}
\Pi(z,1) = 0 = \Pi(z,2).    
\end{equation*}
When $z \in (b_1, b_2)$, \eqref{eq-pde-stationary} implies that 
\begin{equation*}
\begin{cases}
\frac12 \sigma_1^2 \Pi''(z,1) + \Big(\delta + \frac12 \sigma^2_1\Big) \Pi'(z,1) - p_1 \Pi(z,1) = 0, \\
\Pi(z,2) = 0.
\end{cases}    
\end{equation*}
The first equation is an homogeneous second-order linear ODE, whose solution is 
\begin{equation*}
\Pi(z,1) = A_1 e^{\alpha_1^+ (z - b_1)} + A_2 e^{\alpha_1^- (z - b_1)},
\end{equation*}
where $A_1,A_2 \in \rr$ are free parameters and where $\alpha_1^-,\alpha_1^+$ are the two solution to $\phi_1(\alpha)=0$, with 
\begin{equation}
\label{eq:phii}
\phi_i(\alpha) = \frac12 \sigma_i^2\alpha^2 + \Big(\delta + \frac12 \sigma^2_i\Big)\alpha - p_i, \qquad i \in \{1,2\};
\end{equation}
that is,
\begin{equation}
\label{eq:alpha1pm}
\alpha_{1}^\pm= \frac{-(\delta + \frac12 \sigma^2_1) \pm \sqrt{(\delta + \frac12 \sigma^2_1)^2 + 2 \sigma_1^2p_1}}{\sigma_1^2},\qquad\text{with}\quad \alpha_1^-<0<\alpha_1^+. 
\end{equation}
Finally, for $z \in (b_2, +\infty)$, \eqref{eq-pde-stationary} implies that
\begin{equation*}
\begin{cases}
\frac12 \sigma_1^2 \Pi''(z,1) + \Big(\delta + \frac12 \sigma^2_1\Big) \Pi'(z,1) - p_1 \Pi(z,1) + p_1 \Pi(z,2) = 0, \quad \lim_{z \uparrow \infty}\Pi(z,1) = \pi_1, \\[0.3cm]
\frac12 \sigma_2^2 \Pi''(z,2) + \Big(\delta + \frac12 \sigma^2_2\Big) \Pi'(z,2) - p_2 \Pi(z,2) + p_2 \Pi(z,1) = 0, \quad \lim_{z \uparrow \infty}\Pi(z,2) = \pi_2, 
\end{cases}    
\end{equation*} 
whose general solution is given by
\begin{equation*}
\begin{cases}
\Pi(z,1) = \sum_{i=1}^4 B_i e^{\theta_i (z - b_2)} + \Pi_{\text{part}}(z,1),\\
\Pi(z,2) = \sum_{i=1}^4 \bar B_i e^{\theta_i (z - b_2)} + \Pi_{\text{part}}(z,2),
\end{cases}    
\end{equation*}
where $B_i \in \rr$ are free parameters, $\Pi_{\text{part}}$ are particular solutions to the system of ODEs, $\theta_1 < \theta_2 < \theta_3 = 0 < \theta_4$ are the four roots of 
\begin{align}\label{eq:thetas}
\bar \phi (\theta) := \phi_1(\theta)\phi_2(\theta) - p_1p_2 = 0,
\end{align}
and (cf.\ \eqref{eq:phii})
\begin{equation}
\label{eq:Bbar}
\bar B_i = - \frac{\phi_1(\theta_i)}{p_2} B_i =: - \phi_{1i} B_i, \qquad\quad i \in \{1,2,3,4\}.
\end{equation}
Notice that the existence of the $\theta_i$s above is guaranteed by the following properties: (i) $\bar \phi(\alpha_i) = -p_1p_2 <0$, with $\alpha_2<0<\alpha_1$; (ii) $\bar \vphi(0)=0$ and $\bar \phi'(0)<0$; (iii) $\bar \phi(+\infty) = \bar \phi(-\infty) = +\infty$. 

As for $\Pi_{\text{part}}$, by looking for constant solutions, we notice that 
\begin{equation*}
\Pi_{\text{part}}(z,i) = \pi_i, \qquad (z,i) \in \rr \times \{1,2\}
\end{equation*}
satisfies both the equation and the limit condition. Furthermore, in order to ensure that $\lim_{z \uparrow \infty}\Pi(z,i)=\pi_i$ is satisfied, we set $B_3=B_4=0$, so that we are left with
\begin{equation*}
\begin{cases}
\Pi(z,1) = B_1 e^{\theta_1 (z - b_2)} + B_2 e^{\theta_2 (z - b_2)} + \pi_1,\\[0.1cm]
\Pi(z,2) = - B_1 \phi_{11} e^{\theta_1 (z - b_2)} - B_2  \phi_{12} e^{\theta_2 (z - b_2)} + \pi_2.
\end{cases}    
\end{equation*}
To determine $A_1,A_2,B_1,B_2$, we finally impose that $\Pi(\cdot,1) \in C^0(\rr) \cap C^1(\rr \setminus \{b_1\}) \cap C^2(\rr \setminus \{b_1, b_2\})$ and $\Pi(\cdot,2) \in C^0(\rr) \cap C^1(\rr \setminus \{b_2\}) \cap C^2(\rr \setminus \{b_2\})$; that is,
\begin{equation*}
\begin{cases}
\Pi(b_{1}-,1) = \Pi(b_{1}+,1), \\
\Pi(b_{2}-,1) = \Pi(b_{2}+,1), \\
\Pi'(b_{2}-,1) = \Pi'(b_{2}+,1), \\
\Pi(b_{2}-,2) = \Pi(b_{2}+,2), 
\end{cases}
\end{equation*}
which is, in turn, equivalent to the linear system
\begin{equation}
\label{eq-stationary-system}
\begin{bmatrix}
1 & 1 & 0 & 0 \\[0.1cm]
e^{\alpha_1^+ (b_2 - b_1)} & e^{\alpha_1^- (b_2 - b_1)} & -1 & -1 \\
\alpha_1^+ e^{\alpha_1^+ (b_2 - b_1)} & \alpha_1^- e^{\alpha_1^- (b_2 - b_1)} & -\theta_1 & -\theta_2 \\[0.1cm] 
0 & 0 & \phi_{11} &\phi_{12} 
\end{bmatrix}
\begin{bmatrix}
A_1\\
A_2\\
B_1\\
B_2
\end{bmatrix}
=
\begin{bmatrix}
0\\
\pi_1\\
0\\
\pi_2
\end{bmatrix}
.
\end{equation}
It is easy to check that System \eqref{eq-stationary-system} admits a unique solution.

The next proposition shows that $\Pi$ constructed above does indeed identify with the stationary distribution of the process $(X^{I^{\star}}_t, \eps_t)_{t \geq 0}$.

\begin{prop}
\label{prop-stationary}
Recall $p_i,\pi_i$ as in \eqref{eq-Pi}, $b_i$ as in \eqref{eq:abar}, $\alpha_1^{\pm}$ as in \eqref{eq:alpha1pm}, $\theta_i$ as the solutions to \eqref{eq:thetas}, $A_i, B_i$ uniquely solving \eqref{eq-stationary-system}, and $\phi_{1i}$ as in \eqref{eq:Bbar} (cf.\ also \eqref{eq:phii}).

The stationary cumulative distribution function of $(Z_t, \eps_t)_{t \geq 0}$, is given by
\begin{equation}
\label{eq-stationary-CDF}
\begin{aligned}
\Pi(z,1) &=
\begin{cases}
0, & z \leq b_1,\\
A_1 e^{\alpha_1^+ (z - b_1)} + A_2 e^{\alpha_1^- (z - b_1)},  & z \in (b_1, b_2), \\ 
B_1 e^{\theta_1 (z - b_2)} + B_2 e^{\theta_2 (z - b_2)} + \pi_1,  & z \geq b_2, \\
\end{cases}
\\[0.15cm]
\Pi(z,2) &=
\begin{cases}
0, & z \leq b_2,\\[0.1cm]
- B_1\phi_{11} e^{\theta_1 (z - b_2)} - B_2 \phi_{12} e^{\theta_2 (z - b_2)} + \pi_2,  & z > b_2.\\
\end{cases}
\end{aligned}
\end{equation}

Consequently, the stationary distribution of the joint process $(X^{I^{\star}}_t, \eps_t)_{t \geq 0}$ is given by 
\begin{equation}
\label{eq-stationary-measure}
\widehat{\Pi}(x,i) := \Pi(\ln(x),i) = \int_{-\infty}^{x} \frac{\Pi'(\ln(y),i)}{y} dy =: \int_{0}^{x} p_{\infty}(dy,i), \qquad \quad (x,i) \in \rr_+ \times \{1,2\}.
\end{equation}
\end{prop}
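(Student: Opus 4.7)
The plan is to verify that the function $\Pi$ constructed above is the (unique) stationary cumulative distribution function of $(Z_t,\eps_t)_{t\geq 0}$ by checking that it satisfies all the hypotheses of the characterization \eqref{eq-pde-stationary}, and then to transfer the result to $(X^{I^{\star}}_t,\eps_t)_{t\geq 0}$ by a simple change of variables. The positive recurrence of $(Z,\eps)$ that grants existence and uniqueness of the stationary distribution is already available via \cite{BudLiu}, and the ODE-characterization is the adaptation of \cite[Theorem 1]{DAuriaKella} discussed just before the proposition.

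First I would check that system \eqref{eq-stationary-system} is uniquely solvable; this amounts to verifying that the associated $4\times 4$ matrix is non-singular, which follows from $\alpha_1^+\neq \alpha_1^-$, $\theta_1\neq\theta_2$, $\phi_{11}\neq\phi_{12}$ (a consequence of the distinct roots of $\bar{\phi}$) and the exponential factors $e^{\alpha_1^\pm(b_2-b_1)}$ being strictly positive and distinct: a short row-reduction argument suffices. Given the unique solution $(A_1,A_2,B_1,B_2)$, the candidate $\Pi$ solves the ODE in each of the three regions $(-\infty,b_1)$, $(b_1,b_2)$, $(b_2,+\infty)$ by construction, because $\alpha_1^\pm$ are the roots of $\phi_1$ and $\theta_1,\theta_2$ are the negative real roots of $\bar{\phi}$ (the positive roots being discarded by setting $B_3=B_4=0$, which also enforces the limit $\Pi(z,i)\to \pi_i$ as $z\to\infty$). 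The four smooth-pasting conditions encoded in \eqref{eq-stationary-system} deliver exactly the regularity $\Pi(\cdot,1)\in C^0(\rr)\cap C^1(\rr\setminus\{b_1\})\cap C^2(\rr\setminus\{b_1,b_2\})$ and $\Pi(\cdot,2)\in C^0(\rr)\cap C^1(\rr\setminus\{b_2\})\cap C^2(\rr\setminus\{b_2\})$ required in the characterization.

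Next I would verify that both components of $\Pi$ are non-decreasing, which is the main substantive check beyond routine algebra. Since each $\Pi(\cdot,i)$ is $C^1$ on the relevant half-line, equals $0$ at the corresponding boundary $b_i$ and converges to $\pi_i>0$ at $+\infty$, it suffices to rule out interior maxima: at any interior critical point $z_\star$ where $\Pi'(z_\star,i)=0$, the ODE in \eqref{eq-pde-stationary} reduces to $\tfrac12\sigma_i^2\Pi''(z_\star,i) = p_i\Pi(z_\star,i) - p_i\Pi(z_\star,j)$, and a standard maximum-principle argument (looking simultaneously at the two regimes, picking the larger value, and using the sign of the off-diagonal coupling) excludes that $\Pi(\cdot,i)$ attains an interior maximum exceeding its asymptotic value $\pi_i$. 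Combined with the endpoint values this yields monotonicity. I would expect this monotonicity/maximum-principle step to be the most delicate part of the argument, since the two components are coupled and the ODEs change across the threshold $b_2$.

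With $\Pi$ meeting all assumptions of the uniqueness statement adapted from \cite{DAuriaKella}, the equality \eqref{eq-stationary-CDF} follows. The passage to the joint process $(X^{I^\star}_t,\eps_t)$ is then immediate: since $Z_t=\ln X^{I^\star}_t$ with $z\mapsto e^z$ strictly increasing, we have
\begin{equation*}
\mathbb{P}\big(X^{I^{\star}}_\infty\leq x,\,\eps_\infty=i\big) = \mathbb{P}\big(Z_\infty\leq \ln x,\,\eps_\infty=i\big) = \Pi(\ln x,i),
\end{equation*}
and differentiating in $x$ under the change of variable $y=e^z$ produces the density representation $p_\infty(dy,i)=\Pi'(\ln y,i)\,y^{-1}\,dy$ on $(0,+\infty)$ displayed in \eqref{eq-stationary-measure}, which completes the proof.
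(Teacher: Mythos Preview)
Your outline is sound through the verification of the ODE, the regularity/pasting conditions, and the change of variables to $X$. The gap is in the monotonicity step. First, a logical issue: showing that $\Pi(\cdot,i)$ has no interior maximum \emph{exceeding} $\pi_i$, together with $\Pi(b_i,i)=0$ and $\Pi(+\infty,i)=\pi_i$, does not imply monotonicity; the function could still oscillate below the level $\pi_i$. What you actually need is to exclude \emph{every} interior local maximum, and your critical-point identity $\tfrac12\sigma_i^2\Pi''(z_\star,i)=p_i\big(\Pi(z_\star,i)-\Pi(z_\star,j)\big)$ does not deliver this: at a local maximum of component $i$ you only learn $\Pi(z_\star,i)\le\Pi(z_\star,j)$, and the ``pick the larger component'' device fails because the two components tend to \emph{different} limits $\pi_1\neq\pi_2$, so no common normalization makes the coupling close into a contradiction. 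The argument also does not localize to $(b_1,b_2)$, where $\Pi(\cdot,2)\equiv0$ and the sign of $\Pi'(\cdot,1)$ there is exactly the sign of $A_1$, which your sketch never determines.

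The paper handles this point by explicit computation rather than a maximum principle: it solves the $4\times4$ system \eqref{eq-stationary-system} in closed form for $A_1,A_2,B_1,B_2$ and then checks the sign of $\Pi'(\cdot,i)$ region by region, using the orderings $\theta_1<\theta_2<\alpha_1^-<0<\alpha_1^+$ and the signs $\phi_1(\theta_1)>0>\phi_1(\theta_2)$. Concretely it proves $A_1>0$ (hence $\Pi'(\cdot,1)>0$ on $(b_1,b_2)$), then $B_2<0$ (which, combined with a relation read off the third row of \eqref{eq-stationary-system}, yields $\Pi'(\cdot,1)>0$ on $(b_2,\infty)$), and finally reduces $\Pi'(\cdot,2)\ge0$ to an algebraic inequality that is verified by elementary sign-chasing. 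If you wish to avoid these computations there is a cleaner route you did not exploit: since you already argue that \eqref{eq-stationary-system} has a \emph{unique} solution, the ODE together with the regularity and asymptotic conditions singles out a unique candidate; but the true stationary CDF (which exists by \cite{BudLiu} and is monotone by definition) satisfies those same conditions, so it must coincide with your candidate, and monotonicity then comes for free a posteriori.
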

\begin{proof}
Postponed to Appendix \ref{sec-appendix}.
\end{proof}

Explicit calculations on \eqref{eq-stationary-CDF} and  \eqref{eq-stationary-measure} yield the following.
\begin{corollary}
\label{cor:exlicitPihat}
Recall \eqref{eq-stationary-CDF} and  \eqref{eq-stationary-measure}. One has:
\begin{equation}
\label{eq-stationary-CDF-X}
\begin{aligned}
\widehat{\Pi}(x,1) &=
\begin{cases}
0, & x \leq  a_1,\\
A_1 \Big( \frac{x}{a_1} \Big)^{\alpha_1^+} + A_2 \Big( \frac{x}{a_1} \Big)^{\alpha_1^-} ,  & x \in ( a_1,  a_2), \\ 
B_1  \Big( \frac{x}{a_2} \Big)^{\theta_1} + B_2 \Big( \frac{x}{a_2} \Big)^{\theta_2}+ \pi_1,  & x \geq  a_2, \\
\end{cases}
\\[0.15cm]
\widehat{\Pi}(x,2) &=
\begin{cases}
0, & x \leq  a_2,\\[0.1cm]
- B_1 \phi_{11} \Big( \frac{x}{a_2} \Big)^{\theta_1}  - B_2  \phi_{12} \Big( \frac{x}{a_2} \Big)^{\theta_2}  + \pi_2,  & x > a_2,\\
\end{cases}
\end{aligned}
\end{equation}

Furthermore, if $1+\theta_2<0$,
\begin{align}
\label{eq:average1}
\int_0^\infty x p_\infty(dx,1)  = \frac{A_1 \alpha_1^+ a_1}{1+\alpha_1^+} \Big[ \Big( \frac{a_2}{a_1}\Big)^{1+\alpha_1^+} -  1  \Big] +
\frac{A_2 \alpha_1^- a_1}{1+\alpha_1^-} \Big[ \Big( \frac{a_2}{a_1}\Big)^{1+\alpha_1^-} -  1 \Big] 
 - \frac{ B_1 \theta_1 a_2  }{1+\theta_1}  
   - \frac{ B_2 \theta_2 a_2}{1+\theta_2}
\end{align}
and
\begin{align}
\label{eq:average2}
\int_{0}^\infty x p_\infty(dx,2)  =  \Big[\frac{B_1 \theta_1 \phi_{11}}{1+\theta_1}   
+  \frac{B_2 \theta_2 \phi_{12}}{1+\theta_2}\Big] a_2,  
\end{align}
\end{corollary}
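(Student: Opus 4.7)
My plan is to carry out two essentially computational steps, each of which reduces to elementary manipulations of the formulas obtained in Proposition \ref{prop-stationary}.

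First, the explicit form \eqref{eq-stationary-CDF-X} for $\widehat{\Pi}(x,i)$ follows directly from the definition $\widehat{\Pi}(x,i) = \Pi(\ln x, i)$ in \eqref{eq-stationary-measure}, together with the identification $b_i = \ln a_i$ from \eqref{eq:abar}. Under the substitution $z = \ln x$, every exponential $e^{\alpha(z - b_i)}$ appearing in \eqref{eq-stationary-CDF} transforms into $(x/a_i)^\alpha$, and applying this branchwise to each of the three regions of \eqref{eq-stationary-CDF} reproduces \eqref{eq-stationary-CDF-X} verbatim.

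For the averages \eqref{eq:average1} and \eqref{eq:average2}, I would differentiate \eqref{eq-stationary-CDF-X} on each open subinterval to read off the Lebesgue density $p_\infty(dx,i)$, and then compute $\int_0^\infty x\, p_\infty(dx,i)$ piece by piece via the substitutions $u = x/a_1$ on $(a_1,a_2)$ and $u = x/a_2$ on $(a_2,\infty)$, which reduces every contribution to an elementary integral of the form $\int u^{\beta}du$. Before doing so, I would observe that $\widehat{\Pi}(\cdot,i)$ carries no atoms at $a_1$ or $a_2$: this is exactly what the continuity conditions baked into \eqref{eq-stationary-system} enforce, since its first row yields $A_1+A_2=0$ (continuity of $\widehat{\Pi}(\cdot,1)$ at $a_1$) and its second and fourth rows give continuity of $\widehat{\Pi}(\cdot,1)$ and $\widehat{\Pi}(\cdot,2)$ at $a_2$. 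Consequently no Dirac contributions appear in the integration.

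The only point requiring a bit of care is the convergence of the tail integrals over $(a_2,\infty)$. Since $\theta_1 < \theta_2$ and the standing hypothesis $1+\theta_2<0$ forces both $1+\theta_1$ and $1+\theta_2$ to be strictly negative, each $\int_{a_2}^\infty (x/a_2)^{\theta_k}dx = -a_2/(1+\theta_k)$ is finite, and the same applies to the weighted tail integrals appearing in the computation. Collecting all contributions and using that $p_\infty(dx,1)$ vanishes on $(0,a_1)$ while $p_\infty(dx,2)$ vanishes on $(0,a_2)$ then produces \eqref{eq:average1} and \eqref{eq:average2} after straightforward algebra. The main (and really only) obstacle is bookkeeping: correctly tracking signs, the factors $\phi_{1i}$ coming from $\bar B_i = -\phi_{1i}B_i$ in the regime-$2$ expression, and the cancellations of the constant terms arising from the lower endpoints of the radial substitutions.
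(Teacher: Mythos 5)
Your proposal is correct and is exactly the computation the paper has in mind (the paper simply asserts the corollary follows by "explicit calculations" on \eqref{eq-stationary-CDF} and \eqref{eq-stationary-measure}): the substitution $z=\ln x$ gives \eqref{eq-stationary-CDF-X}, and piecewise differentiation plus the elementary power integrals, together with the observation that rows 1, 2 and 4 of \eqref{eq-stationary-system} rule out atoms at $a_1$ and $a_2$ and that $\theta_1<\theta_2<-1$ makes the tail integrals converge, yields \eqref{eq:average1}--\eqref{eq:average2}. No gaps.
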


\subsection{Third step: Solution to the fixed-point problem}
\label{ssec-step3}

In Section \ref{ssec-step2} we computed, for given $Q=(Q_1,Q_2) \in \rr^2_+$, the stationary distribution $(p_\infty^Q(dx,i))_{i = 1,2}$ of the joint process $(X^{I^{\star}(Q)}_t, \eps_t)_{t \geq 0}$. We now look for solutions $Q^{\star}=(Q^{\star}_1,Q^{\star}_2) \in \rr^2_+$ to the fixed-point problem 
\begin{equation}
\label{eq-fixedpoint}
\begin{cases}
Q_1^{\star} = \displaystyle{ \frac{1}{\pi_1} \int_0^\infty x \,\, p^{Q^{\star}}_\infty(dx,1), } \\[0.5cm]
Q_2^{\star} = \displaystyle{ \frac{1}{\pi_2} \int_0^\infty x \,\, p^{Q^{\star}}_\infty(dx,2) }.
\end{cases}
\end{equation}


\begin{thm}
Let Assumption \ref{ass:assrho} hold and assume $1+\theta_2<0$. There exists a unique solution $Q^{\star}=(Q^{\star}_1,Q^{\star}_2) \in \rr^2_+$ to system \eqref{eq-fixedpoint}. As a result, $(I^{\star}(Q^{\star}), Q^{\star})$ is the unique stationary mean-field equilibrium in the sense of Definition \ref{def-MFE}.
\end{thm}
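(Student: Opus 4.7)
The plan is to recast the fixed-point problem \eqref{eq-fixedpoint} as finding a fixed point of the map $T\colon(0,\infty)^2\to(0,\infty)^2$ given by
\begin{equation*}
T_i(Q) := \frac{1}{\pi_i}\int_0^\infty x\,p_\infty^Q(dx,i),\qquad i\in\{1,2\}.
\end{equation*}
The explicit expressions \eqref{eq:average1}--\eqref{eq:average2} (which are well-defined precisely under the assumption $1+\theta_2<0$) make $T$ an explicit function of the investment thresholds $a(Q)=(a_1(Q),a_2(Q))$ (the unique solution of \eqref{eq-coeff-a}) and of the regime-dependent prices $\eta_i(Q)=\varphi_i+\zeta_i Q_i^{-\alpha}$. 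Once a fixed point $Q^\star$ of $T$ is produced and shown to be unique, the pair $(I^\star(Q^\star),Q^\star)$ is, by construction, the unique stationary MFE in the sense of Definition~\ref{def-MFE}.

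For existence I would invoke Brouwer's fixed-point theorem on a compact convex rectangle $B=[\ell_1,u_1]\times[\ell_2,u_2]\subset(0,\infty)^2$. Continuity of $T$ on $(0,\infty)^2$ follows from: (i) smooth dependence of the coefficients of the algebraic system \eqref{eq-coeff-a} on $\eta(Q)$, combined with the uniqueness of $a(Q)$ in Proposition~\ref{prop-explicit-v}(i) and an implicit-function argument, which make $Q\mapsto a(Q)$ continuous; and (ii) the explicit polynomial form of \eqref{eq:average1}--\eqref{eq:average2}. For the invariance $T(B)\subseteq B$ I would exploit two asymptotic features readable from \eqref{eq-coeff-a} and \eqref{eq:average1}--\eqref{eq:average2}: the triggers (and hence $T_i$) stay bounded above on any set bounded away from the origin because $\eta_i(Q)\leq \varphi_i+\zeta_i \ell_i^{-\alpha}$ is finite there; and $T_i(Q)\geq a_i(Q)\geq \underline{a}(B)>0$ uniformly on $B$, since the stationary distribution of $X^{I^\star}$ conditional on $\eps_\infty=i$ is supported in $[a_i,\infty)$. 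A monotone choice of $u_i$ large and $\ell_i$ small based on these bounds then yields $T(B)\subseteq B$, and Brouwer produces $Q^\star\in B$.

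For uniqueness I would exploit the monotonicity noted in the Introduction: each investment trigger $a_i(Q)$ is nonincreasing in each coordinate $Q_j$, $j=1,2$. This is transparent from the optimal stopping representation \eqref{eq-pb-OS}: increasing $Q_j$ reduces $\eta_j$, which decreases the integrand $\eta_{\eps_t}-2c\bar X_t$ pointwise, hence decreases the value $v(\cdot,i)$; since $\mathcal{S}_i=\{v(\cdot,i)=\kappa\}$ and $v\leq\kappa$ always, the stopping region shrinks, so $a_i$ decreases. Combining this with the monotonicity of the stationary mean in the thresholds (raising a reflecting barrier raises the stationary mean, as seen from \eqref{eq:average1}--\eqref{eq:average2}) shows that $T$ is componentwise nonincreasing in $Q$. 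Suppose then that $Q^a\neq Q^b$ are two fixed points. If they are comparable, say $Q^a\leq Q^b$ componentwise with some strict inequality, antitonicity gives $Q^a=T(Q^a)\geq T(Q^b)=Q^b$, a contradiction (and symmetrically for $Q^a\geq Q^b$). For the incomparable case I would follow the strategy of \cite{Weintraub22}, leveraging the \emph{strict} monotonicity of $a_i(Q)$ in $Q_i$ to force a sign contradiction along a one-parameter comparison path joining $Q^a$ and $Q^b$.

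The main obstacle I anticipate is precisely the incomparable case in the uniqueness step. The regime coupling makes the dependence of $a_1$ on $Q_2$ (and of $a_2$ on $Q_1$) nontrivial through the nonlinear system \eqref{eq-coeff-a}, and a mixed ordering $Q_1^a>Q_1^b$ while $Q_2^a<Q_2^b$ between two candidate fixed points does not yield a direct contradiction from componentwise antitonicity alone. Closing this gap requires combining the coordinate-wise monotonicity with a careful perturbation analysis of how a change in $Q_i$ propagates to $a_j$ via \eqref{eq-coeff-a} and then to $T_j$ via \eqref{eq-stationary-system}. The existence step is more routine in spirit, but also requires uniform quantitative a priori bounds on $a(Q)$ for $Q\in B$ to guarantee invariance of the chosen rectangle, and these must be read off from the explicit but lengthy formulas of Section~\ref{sssec-step1b}.
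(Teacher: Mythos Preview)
Your proposal follows essentially the same route as the paper: Brouwer for existence, and a monotonicity-based contradiction for uniqueness. Two points deserve comment.

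For existence, the paper's construction of the invariant rectangle is more direct than yours and sidesteps the slight circularity in choosing $\ell_i$ and $u_i$ simultaneously from bounds that themselves depend on $B$. The key is the uniform bound $\eta_i(Q)\geq\varphi_i$: replacing $\eta_{\eps_t}$ by $\varphi_{\eps_t}$ in \eqref{eq-pb-OS} yields a $Q$-free lower stopping problem $\underline v$ with triggers $\underline a_i$, so $a_i(Q)\geq\underline a_i$ for \emph{every} $Q$, and comparison for reflected SDEs (rather than the algebraic formulas \eqref{eq:average1}--\eqref{eq:average2}) gives $T_i(Q)\geq\underline Q_i$ for all $Q$. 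With $\underline Q$ in hand, the monotonicity of $a_i$ in $Q$ that you already observed yields $a_i(Q)\leq a_i(\underline Q)=:\overline a_i$ on $\{Q\geq\underline Q\}$, and comparison again gives $T_i(Q)\leq\overline Q_i$. The rectangle $[\underline Q_1,\overline Q_1]\times[\underline Q_2,\overline Q_2]$ is then invariant without any iterative refinement.

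On uniqueness, your reading is accurate: the comparable case is routine via antitonicity, and the incomparable case is the only subtle one. The paper's own argument is equally brief here---it treats only the representative case ``$\widetilde Q_1>Q^\star_1$ while $\widetilde Q_2=Q^\star_2$'' and, like you, points to \cite{Weintraub22} for the underlying idea---so your identification of this as the main residual difficulty matches the level of detail the paper itself provides.
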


\begin{proof} 

We first prove existence and then uniqueness.
\vspace{0.25cm}

\textit{1. Existence}. Define the map $\mathcal{R}:\rr^2_+\to\rr^2_+$ as
\begin{equation*}
\mathcal{R}Q = ((\mathcal{R}Q)_1, (\mathcal{R}Q)_2) := \bigg(\frac{1}{\pi_1}\int_0^\infty x \, p^Q_\infty(dx,1), \,\, \frac{1}{\pi_2}\int_0^\infty x \, p^Q_\infty(dx,2) \bigg),  
\end{equation*}
for each $Q=(Q_1,Q_2) \in \rr^2_+$. Then, we are looking for a fixed point $Q^{\star}$ for $\mathcal{R}$; that is, $Q^{\star}$ such that
\begin{equation*}
\mathcal{R}Q^{\star} = Q^{\star}, \qquad\text{i.e.,}\qquad 
\begin{cases}
Q_1^{\star} = (\mathcal{R}Q^{\star})_1, \\
Q_2^{\star} = (\mathcal{R}Q^{\star})_2.
\end{cases}
\end{equation*}

We will prove the existence of such a $Q^{\star}$ by means of the Brower's fixed point theorem and the rest of this proof is organized in several steps. In the following, we will fully specify the dependence on $Q=(Q_1,Q_2)$ of the involved quantities, whenever necessary.
\vspace{0.15cm}

\textbf{Step 1: Monotonicity and continuity of the free boundaries w.r.t.\ $Q$.}
Recalling \eqref{eq-fbp-OS}, it is easy to see that, for any $i=1,2$, $Q_1 \mapsto v^Q(x,i)$ and $Q_2 \mapsto v^Q(x,i)$ are decreasing. Since
\begin{equation*}
a_i(Q) = \inf \Big\{ x>0: v^Q(x,i) < \kappa\Big\},
\end{equation*}
we then have that $Q_1 \mapsto a_i(Q)$ and $Q_2 \mapsto a_i(Q)$ are decreasing as well for any $i=1,2$.

Furthermore, recalling that the boundaries $a_i(Q)$ uniquely solve System \eqref{eq-coeff-a} -- which involves continuously differentiable functions -- the implicit function theorem yields continuity of the functions $Q \mapsto a_i(Q)$, $i \in \{1,2\}$; that is, if $Q^n \to Q$, then $a_1(Q^n) \to a_1(Q)$ and $a_2(Q^n) \to a_2(Q)$. 
\vspace{0.15cm}

\textbf{Step 2: Continuity of $Q \mapsto \mathcal{R}Q$.}
Given that $p^Q_{\infty}$ is absolutely continuous with respect to the Lebesgue measure, with density $s^Q_{\infty}(x,i):=\frac{1}{x}(\Pi^Q)'(\ln(x),i)$ (cf.\ \eqref{eq-stationary-measure} and \eqref{eq-stationary-CDF}), continuity of $a_i \mapsto s^Q_{\infty}(x,i)$ can be shown by direct check for any $(x,i) \in \rr_+ \times \{1,2\}$. Thanks to Step 1, this in turn yields the continuity of $Q \mapsto s_{\infty}^Q(x,i)$ for any $(x,i) \in \rr_+ \times \{1,2\}$. By dominated convergence theorem, this then ensures that $Q\mapsto\mathcal{R}Q$ is therefore continuous. 
\vspace{0.15cm}

\textbf{Step 3: Uniform bounds for $\mathcal{R}Q$.} If a fixed point exists, then this should necessarily live in the image of $Q \mapsto \mathcal{R}Q$. Here we determine uniform bounds for $\mathcal{R}Q$.

From the optimal stopping problem \eqref{eq-pb-OS}, an integration by parts gives
\begin{equation*}
v^Q(x,i) = \kappa + \inf_{\tau \geq 0} \bar{\eee}_{(x,i)}\bigg[ \int_0^\tau e^{-(\rho+\delta)s}\Big( \eta^Q_{\eps_s} - 2c\bar X_s -  (\rho+\delta) \Big) ds \bigg],
\end{equation*}
with $\bar X$ as in \eqref{eq-Xbar}. Since $\zeta_i,Q_i \geq 0$, we have that $\eta_i^Q = \vphi_i + \zeta_i Q_i^{-\alpha} \geq \vphi_i$. Also, recall that $\vphi_i > (\rho+\delta)$ by assumption, for $i \in \{1,2\}$, so that
\begin{equation*}
v^Q(x,i) \geq \kappa + \inf_{\tau \geq 0} \bar{\eee}_{(x,i)}\bigg[ \int_0^\tau e^{-(\rho+\delta)s}\Big( \vphi_i - (\rho+\delta) - 2c\bar X_s \Big) ds \bigg] =: \underline v(x,i). 
\end{equation*}
In particular, this implies that, for each $Q \in \rr^2_+$,
\begin{equation*}
a_i(Q) = \inf \Big\{ x>0: v(x,i) < \kappa\Big\} 
\geq \inf \Big\{ x>0: \underline v(x,i) < \kappa\Big\} =: \underline a_i \in \Big(0, \frac{1}{2c}\big(\vphi_i - (\rho+\delta)\big)\Big).
\end{equation*}
The inclusion on the very right-hand side of the equation above follows from the fact that, for problem $\underline v$, it is never optimal to stop on the region $\{(x,i) \in \rr_+ \times \{1,2\}:\, 2cx > \varphi_i - (\rho+\delta) \}$. Note that $\underline a = (\underline a_1,\underline a_2)$ does not depend on $Q$, as in fact $\underline v$ does not depend on $Q$.

With an abuse of notation, we now highlight the dependence of reflected processes with respect to the reflection barriers. In particular, we denote by $X^{a(Q)}$ the process $X^{I^{\star}(Q)}$ (which is reflected upwards at the barrier $a_{\eps}(Q)$), and by $X^{\underline a}$ the process that is reflected upwards at the barrier $\underline{a}_{\eps}$; that is, $(X^{\underline a}, \eps)$ satisfies \eqref{eq-Skrefl} with $a_{\eps}(Q)$ replaced by $\underline{a}_{\eps}$, and, in particular, $X^{\underline a}$ is given through \eqref{eq-optctrl-SC} with $a_{\eps}(Q)$ replaced by $\underline{a}_{\eps}$. By comparison (through a penalization argument) for reflected SDEs, one has 
\begin{equation*}
X^{a(Q)}_t \geq X^{\underline a}_t, \quad \mathbb{P}-\text{a.s.}\,\, \forall t\geq0,
\end{equation*}
which yields, for $i \in \{1,2\}$,
\begin{equation*}
\eee \big[X_\infty^{a(Q)} \big| \eps_\infty = i \big] \geq \eee \big[X_\infty^{\underline a} \big| \eps_\infty = i \big];
\end{equation*}
that is,
\begin{equation*}
\frac{1}{\pi_i}\int_0^\infty x \, p^Q_\infty(dx,i) \geq \frac{1}{\pi_i}\int_0^\infty x \, \underline p_\infty(dx,i) =: \underline Q_i,
\end{equation*}
where $\underline p_\infty$ is the stationary distribution of $(X^{\underline a}_t, \eps_t)_{t \geq 0}$ (which can be explicitly constructed as in Section \ref{ssec-step2}). Notice that $\underline Q = (\underline Q_1, \underline Q_2)$ does not depend on $Q$, as $\underline a$ does not either. Hence, without loss of generality, we can restrict to those $Q=(Q_1,Q_2)$ such that $Q_i \geq \underline Q_i$ for $i \in \{1,2\}$. 

On the other hand, by Step 1 we have that $Q_1 \mapsto a_i(Q)$ and $Q_2 \mapsto a_i(Q)$ are decreasing, for $i \in \{1,2\}$. Hence, 
\begin{equation*}
a_i(Q) \leq a_i(\underline Q) =: \overline a_i,
\end{equation*}
for each $Q \in \rr^+_2$. Notice that, again, $\overline a_i$ does not depend on $Q$. Therefore, denoting by by $X^{\overline a}$ the process that is reflected upwards at the barrier $\overline{a}_{\eps}$, arguing as before,
\begin{equation*}
\eee \big[X_\infty^{a(Q)} \big| \eps_\infty = i \big] \leq \eee \big[X_\infty^{\overline a} \big| \eps_\infty = i \big];
\end{equation*}
that is,
\begin{equation*}
\frac{1}{\pi_i}\int_0^\infty x \, p^Q_\infty(dx,i) \leq \frac{1}{\pi_i}\int_0^\infty x \, \overline p_\infty(dx,i) =: \overline Q_i,
\end{equation*}
where $\overline p_\infty$ is the stationary distribution of $(X^{\overline a}_t, \eps_t)_{t \geq 0}$ (which, again, can be constructed as in Section \ref{ssec-step2}). It thus follows that, without loss of generality, we can restrict to those $Q=(Q_1,Q_2)$ such that $Q_i \leq \overline Q_i$ for $i \in \{1,2\}$. 

Overall, if an equilibrium exists, then this should necessarily live within the interval $[\underline Q_1, \overline Q_2] \times [\underline Q_1, \overline Q_2]$. Without loss of generality we can thus restrict our attention only to those $Q=(Q_1,Q_2)$ such that
\begin{equation*}
(Q_1,Q_2) \in [\underline Q_1, \overline Q_2] \times [\underline Q_1, \overline Q_2].   
\end{equation*}
\vspace{0.15cm}

\textbf{Step 4: Applying Brower's fixed point theorem.} Finally, combining Step 2 and Step 3, Brower's fixed point theorem guarantees that there exists a fixed point $Q^{\star}=(Q^{\star}_1,Q^{\star}_2)$ for map $\mathcal{R}$.
\vspace{0.25cm}

\textit{2. Uniqueness}. Suppose that there exist $Q^{\star}=(Q^{\star}_1,Q^{\star}_2)$ and $\widetilde{Q}=(\widetilde{Q}_1,\widetilde{Q}_2)$ solving fixed point problem \eqref{eq-fixedpoint} and such that, for example, $\widetilde{Q}_1 > Q^{\star}_1$, while $\widetilde{Q}_2 = Q^{\star}_2$. Then, by Step 1 in the proof of the existence claim, $a_i(\widetilde{Q}) < a_i(Q^{\star})$, for $i\in\{1,2\}$, which gives (arguing as in Step 3 of the proof of the existence claim)
\begin{equation*}
\eee \big[X_\infty^{a(\widetilde{Q})} \big| \eps_\infty = 1 \big] \leq \eee \big[X_\infty^{a({Q}^{\star})} \big| \eps_\infty = 1 \big].
\end{equation*}
Hence, $\widetilde{Q}_1 \leq Q^{\star}_1$, thus a contradiction.
\end{proof}

%
%


\medskip


\indent \textbf{Acknowledgments.} The first and the second authors thank the Chairs EDF--CA CIB Finance and Sustainable Development: Quantitative Approach and the Finance for Energy Markets Research Initiative for their support. Furthermore, they acknowledge the support by the French ANR EcoREES under grant ANR-19-CE05-0042. The third author gratefully acknowledges financial support by the \emph{Deutsche Forschungsgemeinschaft} (DFG, German Research Foundation) - SFB 1283/2 2021 - 317210226.


\appendix
\section{Some proofs}
\label{sec-appendix}

\begin{proof}[Proof of Proposition \ref{prop-verif-SC}]
The proof is organized in two steps. We first show that $I^{\star}$ is admissible and then we prove the optimality of the candidate solution $w$ and $I^{\star}$.
\vspace{0.25cm}

\emph{Admissibility of $I^{\star}$.} Clearly, $I^{\star}$ as in \eqref{eq-optctrl-SC} is $\mathbb{F}$-adapted, it has right-continuous and nondecreasing paths, and it is such that $I^{\star}_{0-}=0$. It thus remains to show that it satisfies the limit and the integrability conditions in \eqref{set:A}. In the sequel, let $X^{\star}:=X^{I^\star}$. 

We start by proving that $\lim_{T\uparrow \infty}\eee_{(x,i)}\big[e^{-\rho T} |X^{\star}_T|^2\big]=0$. Let $\overline{\sigma}:=\max\{\sigma_1,\sigma_2\}$. By using \eqref{eq-SDE-sol}, it is easy to see that 
\begin{equation}
\label{eq:estimlim-1}
\eee_{(x,i)}\big[|X^{\star}_T|^2\big] \leq x^2 e^{-2\delta T + \overline{\sigma}^2 T}\widehat{\eee}_{(x,i)}\Big[e^{2I^{\star}_T}\Big],
\end{equation}
where $\widehat{\eee}_{(x,i)}$ denotes the expectation, conditional on $(X^{\star}_{0-}, \varepsilon_{0-})=(x,i) \in \rr_+ \times \{1,2\}$, with respect to the probability measure $\widehat{\mathbb{P}}$, defined on $(\Omega, \mathcal{F}_T)$ and such that 
$$\frac{d\widehat{\mathbb{P}}}{d\mathbb{P}} = e^{-2\int_0^T\sigma^2_{\varepsilon_u} du + 2\int_0^T\sigma_{\varepsilon_u} dB_u}.$$
Also, let $\widehat{B}_t:=B_t - \int_0^t\sigma_{\varepsilon_u} dB_u$, which, by Girsanov theorem, is an $\mathbb{F}$-Brownian motion under $\widehat{\mathbb{P}}$.

Then, by employing \eqref{eq-optctrl-SC} and letting $\overline{a}:=\max\{a_1,a_2\}$, we find that, under $\widehat{\mathbb{P}}_{(x,i)}$,
\begin{equation}
\label{eq:estimlim-2}
I_T \leq \ln(\frac{\overline{a}}{x}) \vee 0 + \delta T + \sup_{0 \leq s \leq T}\Big(- \frac{1}{2}\int_0^s \sigma^2_{\varepsilon_u} du - \int_0^s\sigma_{\varepsilon_u} d\widehat{B}_u\Big).
\end{equation}
By plugging \eqref{eq:estimlim-2} into \eqref{eq:estimlim-1}, we obtain
\begin{equation}
\label{eq:estimlim-3}
\eee_{(x,i)}\big[|X^{\star}_T|^2\big] \leq C(x) e^{\overline{\sigma}^2 T}\widehat{\eee}_{(x,i)}\Big[\sup_{0 \leq s \leq T} \Big( e^{- \frac{1}{2}\int_0^s \sigma^2_{\varepsilon_u} du - \int_0^s\sigma_{\varepsilon_u} d\widehat{B}_u}\Big)^2\Big],
\end{equation}
with $C(x):= x^2 e^{2 \ln(\frac{\overline{a}}{x}) \vee 0 }.$
Then, Burkholder-Davis-Gundy's inequality applied to the martingale $(e^{- \frac{1}{2}\int_0^s \sigma^2_{\varepsilon_u} du - \int_0^s\sigma_{\varepsilon_u} d\widehat{B}_u})_{s\geq 0}$ yields, after simple estimates,
\begin{equation}
\label{eq:estimlim-4}
\eee_{(x,i)}\big[|X^{\star}_T|^2\big] \leq \widehat{C}(x) \overline{\sigma}^2 T e^{2\overline{\sigma}^2 T},
\end{equation}
with $\widehat{C}(x):=C_o C(x)$, for some $C_o>0$.
It thus follows that $\lim_{T\uparrow \infty}\eee_{(x,i)}\big[e^{-\rho T} |X^{\star}_T|^2\big]=0$ if $\rho > 2\overline{\sigma}^2$, as required in Assumption \ref{ass:assrho}.
\vspace{0.1cm}

We now show that $\eee_{(x,i)}\big[\int_0^{\infty} e^{-\rho t}  \big(\big(X^{\star}_t)^2 dt + X^{\star}_t \circ dI^\star_t\big)\big]<\infty$. This is accomplished by adapting arguments from \cite{Shreveetal} (see also Lemma 5.2 in \cite{FeRo}).

Let $g:\mathbb{R}\times \{1,2\} \to \mathbb{R}$ be the solution to
$$\big(\mathcal{L}-\rho\big)g(x,i) + x^2=0,$$
such that, for any $(x,i)\in \rr \times \{1,2\}$, $|g(x,i)| \leq C(1 + |x|^2)$ for some $C>0$, and $g_x(a^{\star}_i,i)=-x$, for any $x\leq a^{\star}_i$, $i\in\{1,2\}$.

Then, taking a fixed $T>0$, by the regularity of $g$ we can apply It\^o-Meyer's formula for semimartingales to the Markov-modulated process $(e^{-\rho t} g(X^{\star}_{t}, \eps_{t}))_{t\geq0}$ on the time interval $[0,T]$ (see \cite{Eisenberg}) and obtain
\begin{align}
\label{adm2}
&\eee_{(x,i)}\Big[e^{-\rho T} g(X^{\star}_T,\eps_T)\Big] - g(x,i) = \eee_{(x,i)}\bigg[ - \int_0^T e^{-\rho t} \big(X^{\star}_t\big)^2 dt + \int_0^T e^{-\rho t} X^{\star}_t g_x(X^{\star}_t,\eps_t) dI^{\star,c}_t\bigg] \nonumber \\
&\quad + \eee_{(x,i)}\bigg[\sum_{0\le t \le T} e^{-\rho t}\left(g(X^{\star}_{t}, \eps_{t})-g(X^{\star}_{t-},\eps_{t})\right)\bigg].
\end{align}
Here, and in the following, $I^{\star,c}$ denotes the continuous part of $I^{\star}$.
Observe that, the latter expectation in \eqref{adm2} can be written as
\begin{align}
\label{sum-jumps}
&\eee_{(x,i)}\bigg[\sum_{0\leq t \leq T} e^{-\rho t} \big(g(X^{\star}_t, \eps_t)-g(X^{\star}_{t-},\eps_t) \big)\bigg] = \eee_{(x,i)}\bigg[\sum_{0\leq t \leq T} e^{-\rho t} \mathds{1}_{\{\Delta I^\star_{t}>0\}}\big(g(e^{\Delta I^\star_{t}}X^{\star}_{t-}, \eps_t)-g(X^{\star}_{t-},\eps_{t}) \big)\bigg] \nonumber\\
& = \eee_{(x,i)}\bigg[\sum_{0\leq t \leq T} e^{-\rho t} X^{\star}_{t-} \int_0^{{\Delta I^\star_{t}}} e^u g_x(e^u X^{\star}_{t-}, \eps_{t}) du
\bigg]. 
\end{align}

Impose now that $g_x(a^{\star}_i,i)=-1$, and extend the function $g$ on $(-\infty,a^{\star}_i)$ so that $g_x(x,i)=-1$ for any $x < a^{\star}_i$ (for example, set $g(x,i):=a^{\star}_i-x + g(a^{\star}_i,i)$ for $x < a(i)$). Then, since $I^\star_{\cdot}$ is flat off $\{t\geq 0: X^{\star}_{t-} \leq a^{\star}_{\eps_t}\}$, we get 
\begin{align}
\label{xietacont}
X^{\star}_{t-} \int_0^{\Delta I^{\star}_{t}} e^u g_x(e^u X^{\star}_{t-}, \eps_{t}) du = - X^{\star}_{t-} \int_0^{\Delta I^{\star}_{t}} e^u du.
\end{align}
Therefore, by substituting \eqref{xietacont} in \eqref{sum-jumps} and then in \eqref{adm2}, we get that (cf.\ \eqref{eq-XcircI}) 
\begin{equation}
\label{adm2-bis}
g(x,i) = \eee_{(x,i)}\Big[e^{-\rho T} g(X^{\star}_T,\eps_T)\Big] + \eee_{(x,i)}\bigg[\int_0^T e^{-\rho t}  \Big(\big(X^{\star}_t)^2 dt + X^{\star}_t \circ dI^\star_t\Big)\bigg].
\end{equation}

Finally, given that $|g(X^{\star}_T,\eps_T)| \leq C(1+|X^{\star}_T|^2)$, $\mathbb{P}_{(x,i)}$-a.s., we can let $T\uparrow \infty$ on the right-hand side of \eqref{adm2-bis}, apply the result of Step 1 on the fist expectation, the monotone convergence theorem on the second expectation, and obtain 
$$g(x,i) = \eee_{(x,i)}\bigg[\int_0^{\infty} e^{-\rho t}  \Big(\big(X^{\star}_t)^2 dt + X^{\star}_t \circ dI^\star_t\Big)\bigg].$$
The finiteness of the function $g$ constructed above, yields the claim.
\vspace{0.25cm}

\emph{The verification theorem.} The proof of the verification theorem is nowadays standard: See, e.g., \cite{Cadenillas}, \cite{FeRo} and \cite{FeSchZhu} for problems of singular stochastic control with regime switching, as well as \cite{GuoZervos} for a problem with geometric state-dynamics similar to ours, but without regime-switching. We provide it below for the sake of completeness.

Let $I\in \mathcal{A}$ and $w:\mathbb{R}\times \{1,2\} \to \mathbb{R}$ be as in the statement of the theorem. Given the regularity of $w$ and the fact that it solves \eqref{eq-fbp-SC}, we can apply It\^o-Meyer's formula for semimartingales to the Markov-modulated process $(e^{-\rho t} w(X^{\star}_{t}, \eps_{t}))_{t\geq0}$ on the time interval $[0,T]$ (see \cite{Eisenberg}) and obtain
\begin{align}
\label{ver-1}
&\eee_{(x,i)}\Big[e^{-\rho T} w(X^{I}_T,\eps_T)\Big] - w(x,i) \leq \eee_{(x,i)}\bigg[ - \int_0^T e^{-\rho t} \Big(\eta_{\varepsilon_t}X^I_t - c\big(X^{I}_t\big)^2 \Big) dt + \kappa \int_0^T e^{-\rho t} X^{I}_t dI^{c}_t\bigg] \nonumber \\
&\quad + \eee_{(x,i)}\bigg[\sum_{0\le t \le T} e^{-\rho t}\left(w(X^{I}_{t}, \eps_{t})-w(X^{I}_{t-},\eps_{t})\right)\bigg],
\end{align}
where $I^{c}$ denotes the continuous part of $I$. Then, by using that
\begin{align}
\label{sum-jumps-bis}
&\eee_{(x,i)}\bigg[\sum_{0\leq t \leq T} e^{-\rho t} \big(w(X^{I}_t, \eps_t)-w(X^{I}_{t-},\eps_t) \big)\bigg] = \eee_{(x,i)}\bigg[\sum_{0\leq t \leq T} e^{-\rho t} \mathds{1}_{\{\Delta I_{t}>0\}}\big(w(e^{\Delta I_{t}}X^{I}_{t-}, \eps_t)-w(X^{I}_{t-},\eps_{t})\big)\bigg] \nonumber\\
& = \eee_{(x,i)}\bigg[\sum_{0\leq t \leq T} e^{-\rho t} X^{I}_{t-} \int_0^{{\Delta I_{t}}} e^u w_x(e^u X^{I}_{t-}, \eps_{t}) du \bigg] \leq  \kappa 
\eee_{(x,i)}\bigg[\sum_{0\leq t \leq T} e^{-\rho t} X^{I}_{t-} \int_0^{{\Delta I_{t}}} e^u du \bigg] 
\end{align}
and recalling \eqref{eq-XcircI}, we obtain from \eqref{ver-1}, \eqref{sum-jumps-bis} and the growth condition of $w$ that
\begin{align}
\label{ver-2}
&w(x,i) \geq \eee_{(x,i)}\Big[e^{-\rho T} w(X^{I}_T,\eps_T)\Big] + \eee_{(x,i)}\bigg[\int_0^T e^{-\rho t} \Big(\eta_{\varepsilon_t}X^I_t - c\big(X^{I}_t\big)^2 \Big) dt - \kappa \int_0^T e^{-\rho t} X^{I}_t \circ dI_t\bigg] \nonumber \\
& \geq - \eee_{(x,i)}\Big[e^{-\rho T} K(1 + |X^{I}_T|^2)\Big] + \eee_{(x,i)}\bigg[\int_0^T e^{-\rho t} \Big(\eta_{\varepsilon_t}X^I_t - c\big(X^{I}_t\big)^2 \Big) dt - \kappa \int_0^T e^{-\rho t} X^{I}_t \circ dI_t\bigg]
\end{align}
Finally, by taking limits as $T\uparrow \infty$, and using that $I\in \mathcal{A}$, it yields
\begin{align}
\label{ver-3}
&w(x,i) \geq \eee_{(x,i)}\bigg[\int_0^{\infty} e^{-\rho t} \Big(\eta_{\varepsilon_t}X^I_t - c\big(X^{I}_t\big)^2 \Big) dt - \kappa \int_0^{\infty} e^{-\rho t} X^{I}_t \circ dI_t\bigg].
\end{align}
Since the latter inequality holds for any $I\in \mathcal{A}$ and any $(x,i) \in \rr_+ \times \{1,2\}$, we find $w \geq V$ on $\rr_+ \times \{1,2\}$.

On the other hand, picking $I = I^{\star} \in \mathcal{A}$, all the inequalities above become equalities (notice that $I^\star_{\cdot}$ is flat off $\{t\geq 0: X^{\star}_{t-} \leq a^{\star}_{\eps_t}\}$ and $X^{\star}_t \geq a_{\varepsilon_t}$ for all $t\geq 0$ a.s.) and we thus obtain
\begin{align}
\label{ver-4}
&w(x,i) = \eee_{(x,i)}\bigg[\int_0^T e^{-\rho t} \Big(\eta_{\varepsilon_t}X^{\star}_t - c\big(X^{\star}_t\big)^2 \Big) dt - \kappa \int_0^T e^{-\rho t} X^{\star}_t \circ dI^{\star}_t\bigg] \leq V(x,i), \qquad (x,i) \in \rr_+ \times \{1,2\}.
\end{align}
Hence, $w = V$ on $\rr_+ \times \{1,2\}$ and $I^{\star}$ as in \eqref{eq-optctrl-SC} is optimal. 
 \end{proof}

\begin{proof}[Proof of Proposition \ref{prop-verif-OS}]
We just provide details about the proof of the first and the third statements. The proof of (ii) can be indeed obtained by easily adapting Proposition 4.1 and Theorem 4.3 in \cite{FeRo} to the present setting, upon employing (i). In particular, the free-boundary problem \eqref{eq-fbp-OS} then follows by standard arguments based on strong Markov property.

As for (i), it is enough to observe that, for any $i\in\{1,2\}$, $x \mapsto v(x,i)$ is nonincreasing and that $\{(x,i)\in \rr_+ \times \{1,2\}:\, \eta_i - 2cx -(\rho + \delta) <0\} \subset \mathcal{C}$, since an integration by parts on \eqref{eq-pb-OS} yields
\begin{equation*}
v^Q(x,i) = \kappa + \inf_{\tau \geq 0} \bar{\eee}_{(x,i)}\bigg[ \int_0^\tau e^{-(\rho+\delta)s}\Big(\eta^Q_{\eps_s} - 2c\bar X_s -  (\rho+\delta) \Big) ds \bigg],
\end{equation*}
with $\bar X$ as in \eqref{eq-Xbar}. 

Given the continuity of $v(\cdot, i)$ obtained in (ii), optimality of $\tau^{\star}$ claimed in (iii) then follows by standard results in optimal stopping theory (see, e.g., \cite{PeskShir}).
\end{proof}


\begin{proof}[Proof of Proposition \ref{prop-conclus-step1}]
Set 
\begin{equation}
\label{eq-link-SC-OS}
w(x,i) := k_i + \int_{a_i}^x v(y,i) dy, \qquad\quad (x,i) \in \rr_+ \times \{1,2\},
\end{equation}
and notice that $k_1$ and $k_2$ are well defined, since the determinant of system \eqref{eq-k} is $\rho^2 + \rho(p_1+p_2) > 0$. Recalling that $(v(\cdot,i))_{i=1,2}$ is a solution to \eqref{eq-fbp-OS}, we want to prove that $(w(\cdot,i))_{i=1,2}$ is a solution to \eqref{eq-fbp-SC}. Then, employing Proposition \ref{prop-verif-SC} we conclude that $w=V$ on $\rr_+ \times \{1,2\}$.

Due to \eqref{eq-link-SC-OS} and Proposition \ref{prop-verif-OS}-(ii), for any $i=1,2$, $w(\cdot,i) \in C^2(\mathbb{R}_+)$ and it is such that there exists $K>0$ such that $|w(x,i)| \leq K(1 + |x|^2)$, for all $x \in \rr_+$. By \eqref{eq-fbp-OS} we clearly have $w'(x,i)=v(x,i) \leq \kappa $ for each $x \in \rr_+$ and, in particular, $w'(x,i)= \kappa$ for $x \leq a_i$. Also, notice that for $x > a_i$ we have
\begin{multline}
\label{eq-link1}
(\mathcal{L}-\rho)w(x,i) = \frac12 \sigma_i^2 x^2 v'(x,i) - \delta x v(x,i) - \rho k_i \\
+ \sum_{j=1}^2 P_{ij} k_j + \sum_{j=1}^2 P_{ij} \int_{a_j}^x \! v(y,j)dy - \rho \int_{a_i}^x \! v(y,i)dy.
\end{multline}
Now, from \eqref{eq-fbp-OS} it follows that, for each $y > a_i$,
\begin{equation}
\label{eq-link2}
-\rho v(y,i) = -\frac12 \sigma_i^2 y^2 v''(y,i) + (\delta-\sigma_i^2) y v'(y,i) + \delta v(y,i) - \sum_{j=1}^2 P_{ij} v(y,j) - \eta_i + 2cy.
\end{equation}
By an  integration by parts of \eqref{eq-link2} over the interval $(a_i,x)$, we get (recall that $v'(a_i,i)=0$, since $v'$ is continuous and $v'(a_i-,i)=0$, and that $v(a_i,i)=\kappa$)
\begin{equation}
\label{eq-link3}
-\rho \int_{a_i}^x \! v(y,i) dy = -\frac12 \sigma_i^2 x^2v'(x,i) + \delta (x v(x,i) - a_i \kappa) - \sum_{j=1}^2 P_{ij} \int_{a_i}^x \! v(y,j) dy - \eta_i(x-a_i) + c(x^2-a_i^2).
\end{equation}
Plugging equation \eqref{eq-link3} into equation \eqref{eq-link1}, we have
\begin{equation*}
(\mathcal{L}-\rho)w(x,i) =  -   \delta a_i \kappa - \rho k_i + \sum_{j=1}^2 P_{ij} k_j - \sum_{j=1}^2 P_{ij} \int_{a_i}^{a_j} \! v(y,j) dy - \eta_i(x-a_i) + c(x^2-a_i^2) .
\end{equation*}
By the definition of $k_i$ in \eqref{eq-k}, this is equivalent to 
\begin{equation*}
(\mathcal{L}-\rho)w(x,i) = - \eta_ix + cx^2.
\end{equation*}
On the other hand, the same argument applies if $x < a_i$, now leading to the inequality:
\begin{equation*}
(\mathcal{L}-\rho)w(x,i) \leq - \eta_ix + cx^2.
\end{equation*}
Hence, we proved that $w$ satisfies \eqref{eq-fbp-SC}. By Proposition \ref{prop-verif-SC}, whose assumptions are now clearly verified, and $I^{\star}$ as in \eqref{eq-optctrl-SC}, we conclude that $V=w$ on $\rr_+ \times \{1,2\}$.
\end{proof}


\begin{proof}[Proof of Proposition \ref{prop-stationary}]
With regards to the construction of $\Pi$ performed in Section \ref{ssec-step2}, it remains only to show that $\Pi(\cdot,i)$, $i\in\{1,2\}$, as in \eqref{eq-stationary-CDF} is in fact nondecreasing.

Let us first recall from Section \ref{ssec-step2} that 
\begin{equation}
\label{eqapp-ordercoeff1}
\theta_1<\theta_2<\alpha_2<0<\alpha_1.
\end{equation}
Furthermore, recall that function $\phi_1$ from Section \ref{ssec-step2} is given by 
\begin{equation*}
\phi_1(\alpha) = \frac12 \sigma_1^2\alpha^2 + \Big(\delta + \frac12 \sigma^2_1\Big)\alpha - p_1.
\end{equation*}
In particular, notice that
\begin{equation}
\label{eqapp-ordercoeff2}
\phi_1(\theta_1)>0, \qquad\qquad \phi_1(\theta_2)<0,
\end{equation}
which follows by \eqref{eqapp-ordercoeff1} and the fact that $\phi_1$ is convex, negative in $(\alpha_2,\alpha_1)$ and positive elsewhere.
Finally, the solution to system \eqref{eq-stationary-system} is given by 
\begin{equation*}
A_1 = \frac{p_2 \widehat{A}_1}{(p_1+p_2) D}, \qquad 
A_2 = -A_1, \qquad 
B_1 = \frac{p_2 \widehat{B}_1}{(p_1+p_2) D}, \qquad 
B_2 = \frac{p_2 \widehat{B}_2}{(p_1+p_2) D},  
\end{equation*}
where the numerators are given by (we have used the explicit expressions of $\pi_1$ and $\pi_2$) 

%
%
%

\begin{equation*}
\begin{aligned}
& \widehat{A}_1 := e^{(\alpha_1 + \alpha_2) b_1} \Big[(\theta_1-\theta_2) p_1 - \theta_2  \phi_1(\theta_1) + \theta_1 \phi_1(\theta_2) \Big], \\
& \widehat{B}_1 := e^{\alpha_1 b_2 + \alpha_2 b_1} \Big[(\alpha_1 - \theta_2) p_1 + \alpha_1 \phi_1(\theta_2) \Big] - e^{\alpha_1 b_1 + \alpha_2 b_2} \Big[ (\alpha_2 - \theta_2) p_1 + \alpha_2 \phi_1(\theta_2) \Big], \\
& \widehat{B}_2 := -e^{\alpha_1 b_2 + \alpha_2 b_1} \Big[ (\alpha_1 - \theta_1) p_1 + \alpha_1 \phi_1(\theta_1) \Big] + e^{\alpha_1 b_1 + \alpha_2 b_2} \Big[ (\alpha_2 - \theta_1) p_1 + \alpha_2 \phi_1(\theta_1) \Big],
\end{aligned}    
\end{equation*}
and the common denominator by
\begin{multline*}
D := e^{\alpha_1 b_2 + \alpha_2 b_1} \Big[ \theta_1 \phi_1(\theta_2) - \theta_2 \phi_1(\theta_1) + \alpha_1 (\phi_1(\theta_1) - \phi_1(\theta_2)) \Big] \\
+ e^{\alpha_1 b_1 + \alpha_2 b_2} \Big[ \theta_2 \phi_1(\theta_1) - \theta_1 \phi_1(\theta_2) + \alpha_2 (-\phi_1(\theta_1) + \phi_1(\theta_2)) \Big].
\end{multline*}

By the definition of $\Pi$ in \eqref{eq-stationary-system}, to prove that its components are non-decreasing it suffices to check $\Pi(\cdot, 1)$ in $(b_1,b_2)$, $\Pi(\cdot, 1)$ in $(b_2,\infty)$ and $\Pi(\cdot, 2)$ in $(b_2,\infty)$. Let us first consider $\Pi(\cdot, 1)$ in $(b_1,b_2)$. By \eqref{eq-stationary-system}(i) and \eqref{eq-stationary-CDF}, in $(b_1,b_2)$ we have
\begin{equation*}
\Pi'(z,1) = \alpha_1 A_1 e^{\alpha_1 (z - b_1)} + \alpha_2 A_2 e^{\alpha_2 (z - b_1)} = A_1 \big( \alpha_1 e^{\alpha_1(z - b_1)} - \alpha_2 e^{\alpha_2(z-b_1)} \big),  
\end{equation*}
so that $\Pi'(z,1) \geq 0$ if and only if $A_1>0$ (recall that $\alpha_2 <0<\alpha_1$). As for the denominator of $A_1$, one can rewrite
\begin{multline*}
D = e^{\alpha_1 b_2 + \alpha_2 b_2} \big( \theta_1 \phi_1(\theta_2) - \theta_2 \phi_1(\theta_1) \big) \big( e^{-\alpha_2 (b_2 - b_1)} - e^{-\alpha_1(b_2 - b_1)} \big) \\
+ \big( \phi_1(\theta_1) - \phi_1(\theta_2) \big) \big( \alpha_1 e^{\alpha_1 b_2 + \alpha_2 b_1} - \alpha_2 e^{\alpha_1 b_1 + \alpha_2 b_2)} \big)>0,       
\end{multline*}
where the positivity follows by \eqref{eqapp-ordercoeff1} and \eqref{eqapp-ordercoeff2}. As for the numerator of $A_1$, by the formulas for $\pi_i$ in \eqref{eq-Pi}, we have
\begin{equation*}
\widehat{A}_1>0 \Longleftrightarrow f(\theta_1)<0, 
\qquad\quad \text{with} \qquad\quad 
f(\theta) := \theta_2 \phi_1(\theta) - \theta \phi_1(\theta_2) + p_1(\theta_2 -\theta), 
\end{equation*}
which is verified: Indeed, we have $f(\theta_2)=0$, $f(-\infty)=-\infty$, $f'(\theta_2)>0$, $f''<0$ in $(-\infty,0)$, from which $f \leq 0$ in $(-\infty, \theta_2) \ni \theta_1$.

Let us now consider $\Pi(\cdot, 1)$ in $(b_2,\infty)$. By plugging \eqref{eq-stationary-system}(i) into \eqref{eq-stationary-system}(iii), we get
\begin{equation*}
\theta_1 B_1 + \theta_2 B_2 = A_1 (\alpha_1e^{\alpha_1 (b_2 - b_1)} - \alpha_2e^{\alpha_2 (b_2 - b_1)}) > 0,   
\end{equation*}
where the positivity of the right-hand side follows from \eqref{eqapp-ordercoeff1}. In particular, in $(b_2,\infty)$ we have 
\begin{equation*}
\Pi'(z,1) = \theta_1 B_1 e^{\theta_1 (z - b_2)} + \theta_2 B_2 e^{\theta_2 (z - b_2)} > \theta_2 B_2 \big(e^{\theta_2(z - b_2)} - e^{\theta_1(z - b_2)} \big),    
\end{equation*}
so that $\Pi'(z,1) \geq 0$ if and only if $B_2<0$. We have already proved that the denominator $D$ is positive, so it remains to prove that $\widehat{B}_2<0$. This condition is indeed verified, since $\widehat{B}_2$ can be rewritten as
\begin{multline*}
\hat B_2 = p_1 e^{\alpha_1 b_1 + \alpha_2 b_1}\big[(\alpha_2 - \theta_1) e^{\alpha_2(b_2 - b_1)} - (\alpha_1 - \theta_1) e^{\alpha_1(b_2 - b_1)} \big] \\
+ \phi_1(\theta_1) \big[ \alpha_2 e^{\alpha_1 b_1 + \alpha_2 b_2} - \alpha_1 e^{\alpha_2 b_1 + \alpha_2 b_1} \big]
\end{multline*}
and both the terms on the right-hand side are negative: the second one by \eqref{eqapp-ordercoeff1}, the first one since
\begin{equation*}
(\alpha_2 - \theta_1) e^{\alpha_2(b_2 - b_1)} - (\alpha_1 - \theta_1) e^{\alpha_1(b_2 - b_1)} <  (\alpha_1-\theta_1) \big(e^{\alpha_2(b_2 - b_1)} - e^{\alpha_1(b_2 - b_1)} \big)<0.   
\end{equation*}

Let us finally consider $\Pi(\cdot, 2)$ in $(b_2,\infty)$. We have
\begin{equation*}
\Pi'(z,2) = - \theta_1 B_1 \frac{\phi_1(\theta_1)}{p_2} e^{\theta_1 (z - b_2)} - \theta_2 B_2 \frac{\phi_1(\theta_2)}{p_2} e^{\theta_2 (z - b_2)}. 
\end{equation*}
so that $\Pi'(z,2) \geq 0$ if and only if 
\begin{equation*}
B_1 \geq - \Big(B_2 \frac{\phi_1(\theta_2)}{\phi_1(\theta_1)} \frac{\theta_2}{\theta_1} \Big) e^{(\theta_2 - \theta_1) (z-b_2)} .
\end{equation*}
Since the parenthesis in the right-hand side is positive and since $z-b_2 >0$, it suffices to prove
\begin{equation*}
B_1 \geq - B_2 \frac{\phi_1(\theta_2)}{\phi_1(\theta_1)} \frac{\theta_2}{\theta_1},
\end{equation*}
which is equivalent to
\begin{equation*}
e^{(\alpha_1 - \alpha_2) (b_2 - b_1)} F(\alpha_1) \leq F(\alpha_2),
\end{equation*}
where we have set
\begin{equation*}
F(\alpha) := \alpha \Big[ \phi_1(\theta_1) \phi_1(\theta_2) (\theta_1 - \theta_2) + p_1 \big(\theta_1 \phi_1(\theta_1) - \theta_2 \phi_1(\theta_2)\big)\Big] + p_1 \theta_1 \theta_2 (\phi_1(\theta_2) - \theta_2 \phi_1(\theta_1)).
\end{equation*}
Notice now that it suffices to prove that $F'(\alpha)<0$ for each $\alpha \in \rr$. Indeed, if that is the case, it follows that $F(\alpha_1) \leq F(\alpha_2)$ and that $F(\alpha_1) \leq F(0) < 0$, so that 
\begin{equation*}
e^{(\alpha_1 - \alpha_2) (b_2 - b_1)} F(\alpha_1) \leq F(\alpha_1) \leq F(\alpha_2).
\end{equation*}
Let us then verify that $F$ is decreasing, which is equivalent to 
\begin{equation}
\label{eqapp-the-last}
\theta_1 \phi_1(\theta_1) \big(\phi_1(\theta_2) + p_1\big) < \theta_2 \phi_1(\theta_2) \big(\phi_1(\theta_1) + p_1\big).
\end{equation}
However, \eqref{eqapp-the-last} is always verified. Indeed, if $\phi_1(\theta_2) + p_1 >0$, then \eqref{eqapp-the-last} is trivially true. On the other hand, if $\phi_1(\theta_2) + p_1 < 0$, then \eqref{eqapp-the-last} is equivalent to
\begin{equation*}
\frac{\phi_1(\theta_1)}{\phi_1(\theta_2)} > \frac{\theta_1}{\theta_2} \, \frac{\phi_1(\theta_1) + p_1}{\phi_1(\theta_2)+p_1}.
\end{equation*}
Considering the right-hand side, the first factor is greater than 1, while the second factor is negative. Hence,
\begin{equation*}
\frac{\theta_1}{\theta_2} \, \frac{\phi_1(\theta_1) + p_1}{\phi_1(\theta_2)+p_1} < \frac{\phi_1(\theta_1) + p_1}{\phi_1(\theta_2)+p_1} < \frac{\phi_1(\theta_1)}{\phi_1(\theta_2)},
\end{equation*}
where the last inequality immediately follows from \eqref{eqapp-ordercoeff2}.
\end{proof}

\bibliographystyle{plain}

\end{document}